\newtheorem{theorem}{Theorem}
\newtheorem{corollary}[theorem]{Corollary}
\newtheorem{definition}[theorem]{Definition}
\newtheorem{example}[theorem]{Example}
\newtheorem{lemma}[theorem]{Lemma}
\newtheorem{notation}[theorem]{Notation}
\newtheorem{proposition}[theorem]{Proposition}
\newtheorem{remark}[theorem]{Remark}
\numberwithin{theorem}{section}
\numberwithin{equation}{section}
\begin{document}
\title[The heat flow, GAF, and $SL(2;\mathbb{R})$]{The heat flow, GAF, and $SL(2;\mathbb{R})$}
\author{Brian C. Hall}
\address{Brian C. Hall: Department of Mathematics, University of Notre Dame, Notre Dame,
IN 46556, USA}
\email{bhall@nd.edu}
\author{Ching-Wei Ho}
\address{Ching-Wei Ho: Institute of Mathematics, Academia Sinica, Taipei 10617, Taiwan}
\email{chwho@gate.sinica.edu.tw}
\author{Jonas Jalowy}
\address{Jonas Jalowy: Institut f\"ur Mathematische Stochastik, Westf\"alische
Wilhelms-Universit\"at\linebreak M\"unster, Orl\'eans-Ring 10, 48149
M\"unster, Germany}
\email{jjalowy@uni-muenster.de}
\author{Zakhar Kabluchko}
\address{Zakhar Kabluchko: Institut f\"ur Mathematische Stochastik, Westf\"alische
Wilhelms-Universit\"at M\"unster, Orl\'eans-Ring 10, 48149 M\"unster, Germany}
\email{zakhar.kabluchko@uni-muenster.de}
\keywords{Heat flow, entire functions, Gaussian analytic function, zeros, ordinary differential equation, point processes.}
\subjclass{Primary: 30C15; Secondary: 35K05, 60G15, 30D20, 30D10, 34A99, 30H20.}
\begin{abstract}
We establish basic properties of the heat flow on entire holomorphic functions
that have order at most 2. We then look specifically at the action of the heat
flow on the Gaussian analytic function (GAF). We show that applying the heat
flow to a GAF and then rescaling and multiplying by an exponential of a quadratic function gives another
GAF. It follows that the zeros of the GAF are invariant in distribution under
the heat flow, up to a simple rescaling.

We then show that the zeros of the GAF evolve under the heat flow
approximately along straight lines, with an error whose distribution is
independent of the starting point. Finally, we connect the heat flow on the
GAF to the metaplectic representation of the double cover of the group
$SL(2;\mathbb{R}).$
\end{abstract}
\maketitle
\tableofcontents
\addtocontents{toc}{\protect\enlargethispage*{\baselineskip}}
\noindent
\section{Introduction}
The aim of this note is to study the action of the heat flow operator on some
deterministic and random entire functions. The main example we are interested
in is the (flat) \emph{Gaussian analytic function} (or
GAF)~\cite{hough_etal_book,sodin_tsirelson,sodin_review} defined by
\begin{equation}
G(z)=\sum_{n=0}^{\infty}\xi_{n}\frac{z^{n}}{\sqrt{n!}}, \label{eq:GAFdef}%
\end{equation}
where $\xi_{0},\xi_{1},\ldots$ are i.i.d.\ random variables with the standard complex Gaussian distribution (with Lebesgue density on $\mathbb{C}$ given by $\pi
^{-1}\mathrm{e}^{-|z|^{2}}$). It is known that $G(z)$ is a random entire
function whose set of zeros is a stationary point process, meaning that the
zero set of $G(z-a)$ has the same distribution as the zero set of $G(z)$, for
all $a\in\mathbb{C}$. The finite-dimensional distributions of the stochastic
process $(G(z))_{z\in\mathbb{C}}$ are multivariate complex Gaussian, and the
covariance function is given by
\begin{equation}
\mathbb{E}\left[  G(z)\overline{G(w)}\right]  =\exp\{z\overline{w}\},\qquad
z,w\in\mathbb{C}. \label{eq:GAF_covariance}%
\end{equation}

\subsection{Heat flow on entire functions}

Let $
D=d/dz
$
be the differentiation operator acting on analytic functions of the complex
variable $z$. Given a complex number $\tau\in\mathbb{C}$ and an analytic
function $F(z)$, the \emph{heat-flow operator}\footnote{If $\tau$ is a positive real number and $F$ is restricted to the real line,
$e^{-\tau D^{2}/2}$ is actually what is usually called the backward heat
operator. In the present paper, however, we will allow the time parameter $\tau$ to be an arbitrary complex number, in some
cases subject to a restriction on the size of $\left\vert \tau\right\vert$, so that the choice of the sign is only for consistency with a forthcoming paper. Also note that the heat flow of our consideration is the heat flow in one complex variable and should not be confused with the heat flow in two real variables, that is $\exp\{-\frac \tau 2 \frac{\partial}{\partial z}\frac{\partial}{\partial\bar z}\}$, which is the identity on the space of holomorphic functions.} $\exp\{-\tau D^{2}/2\}$ is
defined by the formal series
\begin{equation}
F(z;\tau):=e^{-\tau D^{2}/2}F(z)=\sum_{k=0}^{\infty}\frac{1}{k!}\left(
\frac{-\tau}{2}\right)  ^{k}D^{2k}F(z).
\label{eq:heat_flow_operator_formal_series}%
\end{equation}
If $F(z)$ is a polynomial, the series terminates after finitely many nonzero
terms. In the simplest case when $F(z)=z^{n}$ and $\tau=1,$ this leads to the
\textit{Hermite polynomials} (with the probabilists' normalization) defined
by
\begin{equation}
\operatorname{He}_{n}(z):=\exp\left\{  -\frac{1}{2}D^{2}\right\}  z^{n}%
=n!\sum_{m=0}^{[n/2]}\frac{(-1)^{m}}{m!2^{m}}\cdot\frac{z^{n-2m}}%
{(n-2m)!},\qquad n\in\mathbb{N}_{0}. \label{eq:hermite_poly_def}%
\end{equation}

Section \ref{sec:heatOp} gives conditions under which the heat operator in
(\ref{eq:heat_flow_operator_formal_series}) for an entire function $F$ is well
defined and develops various properties of the solutions. Related results have been obtained by Papanicolaou, Kallitsi, and Smyrlis \cite{PKS}, with a focus on entire functions in the variable $\tau$ and hence excluding the heat evolved GAF (for which the heat flow is defined for $|\tau|<1$ only). Moreover, note that \eqref{eq:heat_flow_operator_formal_series} defines a solution to the (backward) heat equation, i.e.
$
\frac{\partial}{\partial \tau}F(z;\tau)=-\frac{1}{2}D^2F(z;\tau). 
$
%
The heat flow operator
is closely related to the Segal--Bargmann
transform~\cite{hall_holomorphic_methods,folland_book} and the Weierstrass
transform, both of which have been much studied.

\subsection{Heat flow on the Gaussian analytic function}

Recently, there has been interest in studying the dynamics of the complex
zeros of polynomials (or entire functions) undergoing the heat flow;
see~\cite{csordas_smith_varga, tao_blog1,tao_blog2,rodgers_tao, kabluchko_curie_weiss,hallho,heatflowpoly}. The next theorem, which is one of our main results, shows that the GAF stays invariant
in distribution under a heat operator, followed by a dilation and
multiplication by a Gaussian (an exponential of a quadratic function). It follows that the distribution of zeros of the
GAF remains unchanged under the heat flow, up to a scaling. We will denote by $\mathcal{Z}(F)$ the collection of zeros of a function $F$ (which can also be identified with the point process $\sum_{z\in\mathcal Z (F)}\delta_{z}$) and we write $a\cdot\mathcal Z(F)=\{az :z\in\mathcal Z(F)\}$ for the element-wise rescaling by $a\in\mathbb C$.

\begin{theorem}
\label{theo:GAF_invariant_generalization_zeros_introduction} Let $G$
be the plane GAF and let $\tau$ be a complex number with $\left\vert
\tau\right\vert <1.$ Then the random holomorphic function $V_{\tau}G$ given by%
\begin{equation}
(V_{\tau}G)(z)=\left(  1-\left\vert \tau\right\vert ^{2}\right)
^{1/4}~e^{\bar\tau z^{2}/2}\left(  e^{-\tau D^{2}/2}G\right)  \left(
z\sqrt{1-\left\vert \tau\right\vert ^{2}}\right)  \label{eq:Vtau}%
\end{equation}
is well defined and has the same distribution as $G.$ In particular, we have equality in distribution of the collections of zeros: 
\[
\frac{\mathcal Z(e^{-\tau D^{2}/2}G)}{\sqrt{1-\left\vert \tau\right\vert ^{2} }}=\mathcal Z (V_\tau G)\stackrel{d}{=}  \mathcal Z (G).
\]
\end{theorem}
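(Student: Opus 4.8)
The heart of the matter is to show that $V_\tau G$ is a mean-zero complex Gaussian process with the same covariance \eqref{eq:GAF_covariance} as $G$; since a complex Gaussian process on $\mathbb C$ is determined by its covariance kernel, the equality in distribution $V_\tau G \stackrel d= G$ follows, and then the statement about zeros is immediate because $\mathcal Z$ is a deterministic functional of the function, with the first equality $\mathcal Z(e^{-\tau D^2/2}G)/\sqrt{1-|\tau|^2} = \mathcal Z(V_\tau G)$ being nothing but the elementary fact that multiplying by the nowhere-vanishing factor $(1-|\tau|^2)^{1/4} e^{\tau z^2/2}$ does not change zeros and that applying a dilation $z \mapsto z\sqrt{1-|\tau|^2}$ to the argument rescales the zero set by $1/\sqrt{1-|\tau|^2}$.

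**Key steps in order.** First I would record that, because $G$ has order $2$ and $|\tau|<1$, the results of Section~\ref{sec:heatOp} guarantee that $e^{-\tau D^2/2}G$ is a well-defined entire function, so $V_\tau G$ makes sense; I would also note that $e^{-\tau D^2/2}G$ is still a Gaussian process (the heat operator acts term-by-term on the a.s.-convergent series \eqref{eq:GAFdef}, and a linear image of a Gaussian process is Gaussian), hence so is $V_\tau G$. Second, and this is the main computation, I would compute the covariance kernel
\[
K_\tau(z,w) := \mathbb E\!\left[(V_\tau G)(z)\,\overline{(V_\tau G)(w)}\right].
\]
Pulling out the deterministic factors, $K_\tau(z,w) = (1-|\tau|^2)^{1/2} e^{\tau z^2/2}\,\overline{e^{\tau w^2/2}}\cdot \mathbb E\big[(e^{-\tau D^2/2}G)(z\sqrt{1-|\tau|^2})\,\overline{(e^{-\tau D^2/2}G)(w\sqrt{1-|\tau|^2})}\big]$. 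The inner expectation is obtained by applying $e^{-\tau D_z^2/2}$ and $\overline{e^{-\tau D_w^2/2}} = e^{-\bar\tau D_w^2/2}$ to the covariance $e^{z\bar w}$ of $G$. So the crux is the identity
\[
\left(e^{-\tau D_z^2/2}\, e^{-\bar\tau D_w^2/2}\, e^{z\bar w}\right)\Big|_{z\to z s,\ w\to w s} = \frac{1}{\sqrt{1-|\tau|^2}}\, e^{-\tau z^2 s^2/2}\, e^{-\bar\tau w^2 s^2/2}\, e^{z\bar w},
\]
where $s = \sqrt{1-|\tau|^2}$, which would make all the prefactors cancel and leave exactly $e^{z\bar w}$.

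**The main obstacle.** The real work is evaluating $e^{-\tau D_z^2/2}e^{-\bar\tau D_w^2/2}e^{z\bar w}$. I would do this by a Gaussian-integral (Weierstrass-transform) representation of the heat operator: for suitable $F$, $(e^{-\tau D^2/2}F)(z) = \frac{1}{\sqrt{2\pi\tau}}\int_{\mathbb R \text{ (or a rotated contour)}} e^{-u^2/(2\tau)} F(z + iu)\,du$ — or, more robustly here, I would simply observe that $e^{-\tau D_z^2/2}$ applied to the exponential $e^{cz}$ gives $e^{-\tau c^2/2} e^{cz}$, and then expand $e^{z\bar w}$ in its Taylor series in $z$ to see that $e^{-\tau D_z^2/2}e^{z\bar w}$ is a generating-function sum of Hermite polynomials; summing that series produces a Gaussian factor. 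Carrying this out for both variables and keeping careful track of the complex constants is where sign/coefficient errors could creep in, and one must check the resulting double series (a bilinear Mehler-type kernel) converges precisely when $|\tau|<1$ — which is where the hypothesis is used and why the statement fails at $|\tau|=1$. Once the covariance identity is verified, the proof closes by invoking uniqueness of a Gaussian process given its covariance; I would also remark that one should check $\mathbb E[(V_\tau G)(z)(V_\tau G)(w)] = 0$ (the "pseudo-covariance"), which follows the same way since $\mathbb E[G(z)G(w)]=0$ and the heat operator and the deterministic factors are holomorphic, preserving this vanishing.
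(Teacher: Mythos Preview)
Your proposal is correct and follows essentially the same route as the paper: both arguments reduce the claim to verifying that the covariance of the Gaussian process $V_\tau G$ equals $e^{z\bar w}$, and both accomplish this via a Mehler-type identity for the heat-evolved kernel. The paper expands $e^{-\tau D^2/2}G$ in Hermite polynomials and invokes Mehler's formula~\eqref{eq:mehler_formula} directly, while you propose to apply the heat operators to the covariance kernel $e^{z\bar w}$ (which amounts to the same computation, and indeed is how the paper itself derives Mehler's formula in Example~\ref{mehler.ex}); your additional remark about checking the pseudo-covariance is a nice point of rigor that the paper leaves implicit.
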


Theorem \ref{theo:GAF_invariant_generalization_zeros_introduction} tells us
the distribution of the zeros of $e^{-\tau D^{2}/2}G$ for one fixed time
$\tau.$ It is, however, of considerable interest to understand how the zeros
of $e^{-\tau D^{2}/2}G$ evolve with time. The evolution of the zeros will
depend on \textit{where} in the complex plane the zeros are located. (Note that even though the zeros of the GAF form a stationary point process, the GAF itself is not stationary). For this
reason, it is convenient to condition the GAF to have a zero at a
fixed point $a\in\mathbb{C},$ and then track the evolution of this zero.

\begin{definition}
\label{zaTau.def}Fix $a\in\mathbb{C}$ and let $G^{a}$ be the GAF $G$
conditioned on $G(a)=0,$ so that $G^{a}$ has a zero at $a.$ With
probability 1, the zero of $G^{a}$ at $a$ is simple. Then define $z^{a}%
(\cdot)$ to be the unique holomorphic function, defined in a neighborhood of
0, such that $z^{a}(0)=a$ and such that
\[
(e^{-\tau D^{2}/2}G^{a})(z^{a}(\tau))=0.
\]
That is to say, $z^{a}(\tau)$ is the zero of $e^{-\tau D^{2}/2}G^{a}$ that
starts at $a$ when $\tau=0.$
\end{definition}

With this definition, we can state the following result.

\begin{theorem}
\label{theo:distributionZeros}We have the following equality in distribution:%
\begin{equation}\label{eq:distributionZeros}
z^{a}(\tau)\overset{d}{=}a+\tau\bar{a}+z^{0}(\tau).
\end{equation}

\end{theorem}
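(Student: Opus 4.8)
The plan is to exploit the invariance result in Theorem \ref{theo:GAF_invariant_generalization_zeros_introduction} together with the explicit description of the operator $V_\tau$, and to understand how conditioning on a zero at $a$ and the shift $z\mapsto z-a$ interact with the heat flow. The starting observation is that the heat operator $e^{-\tau D^2/2}$ does not commute with translations in a naive way; instead, using $D e^{z\bar a - |a|^2/2}\cdot = e^{z\bar a - |a|^2/2}(D+\bar a)\cdot$ one gets a clean conjugation identity. Concretely, I would first establish a deterministic lemma: for any entire $F$ of order at most $2$ (so that the heat flow is defined), and any $a,b\in\mathbb C$,
\[
\bigl(e^{-\tau D^2/2}(F(\cdot + a))\bigr)(z) = \bigl(e^{-\tau D^2/2}F\bigr)(z+a),
\]
which is just the fact that $D^2$ commutes with translation, but the \emph{point} is that this is the wrong symmetry to use. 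The right move is to combine translation with the Gaussian factor coming from $V_\tau$: one checks that $e^{-\tau D^2/2}$ conjugated by multiplication by $e^{\bar a z}$ produces a first-order shift in the argument, reflecting the $a+\tau\bar a$ appearing on the right-hand side of \eqref{eq:distributionZeros}.

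The key steps, in order, are as follows. First, describe the law of $G^a$: conditioning the Gaussian process $G$ on $G(a)=0$ gives $G^a(z) = G(z) - e^{z\bar a}\,e^{-|a|^2}G(a)$ in distribution, but more usefully $G^a$ can be written as $G(z) - \frac{\mathbb E[G(z)\overline{G(a)}]}{\mathbb E|G(a)|^2}G(a)$; better still, by stationarity of the zero set and an explicit Gaussian computation, $G^a(z) \overset{d}{=} e^{\text{quadratic in }z,a}\cdot G(z-a)$ up to a factor that has a zero at $z=a$ — I would pin down the exact formula, since the quadratic exponential prefactor is exactly what feeds the $\tau\bar a$ term. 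Second, apply $e^{-\tau D^2/2}$ to this representation; using the conjugation identity $e^{-\tau D^2/2}\bigl(e^{\bar a z}H(z)\bigr) = e^{-\tau\bar a^2/2}\,e^{\bar a z}\,\bigl(e^{-\tau D^2/2}H\bigr)(z - \tau\bar a)$ (which follows from $e^{-\tau D^2/2}e^{\bar a z} = e^{-\tau \bar a^2/2}e^{\bar a z}$ and the intertwining $e^{\bar a z}D e^{-\bar a z} = D - \bar a$), one sees the argument of the heat-flowed function gets shifted by $-\tau\bar a$ relative to the $a$-shift already present, i.e.\ the combined zero location moves from $z^0(\tau)$-type behavior to $a + \tau\bar a + (\text{shifted }z^0(\tau))$. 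Third, track the zero: $z^a(\tau)$ is defined as the zero of $e^{-\tau D^2/2}G^a$ continuing from $a$; substituting the representation from step two and reading off where the heat-flowed GAF (now manifestly distributed as $e^{-\tau D^2/2}G^0$ up to the deterministic affine change of variables) vanishes, gives $z^a(\tau) = a + \tau\bar a + \tilde z(\tau)$ where $\tilde z(\tau)$ is the zero of $e^{-\tau D^2/2}$ applied to a GAF conditioned to vanish at $0$, i.e.\ $\tilde z(\tau)\overset{d}{=}z^0(\tau)$.

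The main obstacle will be the bookkeeping in step one and two: getting the exact Gaussian prefactor in the representation of $G^a$ right, and then checking that after applying $e^{-\tau D^2/2}$ and the dilation/exponential twist from $V_\tau$, the leftover random function is genuinely equal in distribution to $e^{-\tau D^2/2}G^0$ and not merely to something with the same zero set — although for the statement about $z^a(\tau)$ only the zero sets matter, so some of this can be absorbed. A subtle point to handle carefully is that conditioning on $G(a)=0$ is a measure-zero event, so the representation of $G^a$ must be justified either via the Gaussian regression formula or via a limiting argument, and one must confirm that the holomorphic branch $z^a(\tau)$ defined near $\tau=0$ is consistent under the change of variables (simplicity of the zero at $a$, which the definition already grants with probability one, is what makes the implicit-function-theorem branch well defined, and the affine substitution $\tau\mapsto\tau$, $z\mapsto z - a - \tau\bar a$ is invertible so transports this branch correctly). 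I expect the cleanest writeup routes everything through Theorem \ref{theo:GAF_invariant_generalization_zeros_introduction} applied to $G$ and to a translated/conditioned copy, rather than redoing the Gaussian covariance computation from scratch.
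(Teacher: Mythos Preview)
Your approach is essentially the paper's: the conjugation identity $e^{-\tau D^2/2}[e^{\bar a z}H(z)]=e^{-\tau\bar a^2/2}e^{\bar a z}(e^{-\tau D^2/2}H)(z-\tau\bar a)$, combined with translation-invariance of the heat operator, is packaged as Lemma~\ref{lem:shiftZeros}, and the representation of $G^a$ you are circling around is cleanly stated as $G^a\overset{d}{=}T_aG^0$ with $T_aF(z)=e^{-|a|^2/2}e^{\bar a z}F(z-a)$ the unitarized translation (Lemma~\ref{lem:GAFconditioned}), proved by observing that $T_a$ is unitary on the Segal--Bargmann space and maps $\ker\{F\mapsto F(0)\}$ onto $\ker\{F\mapsto F(a)\}$, which is tidier than the regression formula. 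Your closing suggestion to route through Theorem~\ref{theo:GAF_invariant_generalization_zeros_introduction} is a detour---$V_\tau$ plays no role in the paper's proof of this statement.
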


Here we emphasize that $z^{a}(\tau)$ is only defined for $\tau$ in
\textit{some} disk around $0$ and not necessarily for all $\tau$ in the unit
disk. After all, the zero $z^{a}(\tau)$ can collide with another zero and then
it will no longer possible to define $z^{a}(\tau)$ holomorphically. Thus, in
particular, $z^{0}(\tau)$ is a random variable defined on a disk whose radius is
also random. We conjecture that if $\tau$ is restricted to be real, then
$z^{a}(\tau)$ is well defined as a real analytic function of $\tau$ for all
$\tau\in(-1,1).$

Note that $z^{0}(\tau)$ is a fixed random variable whose distribution is
independent of $a.$ Thus, $z^{a}(\tau)$ is equal to $a+\tau\bar{a}$ plus an
error that is \textquotedblleft order 1\textquotedblright\ in the sense that
the error has a fixed probability distribution independent of $a.$ In
particular, the error will be small compared to $\left\vert a\right\vert $
when $\left\vert a\right\vert $ is large. We write this observation
schematically as%
\begin{equation}
z^{a}(\tau)\approx z^{a}(0)+\tau\overline{z^{a}(0)}. \label{eq:zjTraj}%
\end{equation}

Equation (\ref{eq:zjTraj}) says that if there happens to be a zero of the GAF
at a particular point $a$ in the plane, then that zero tends to move along a
straight line, with a fixed amount of variation, independent of $a.$ Figure
\ref{linearapprox.fig} shows this behavior in a simulation, which tracks the
motion of the 100 smallest zeros of the GAF under the heat flow for real time
$\tau$ with $0\leq \tau<1.$ The plot compares the actual trajectories\ (blue) to the
straight-line motion on the right-hand side of (\ref{eq:zjTraj}) (gray).

\begin{figure}[ptb]%
\centering
\includegraphics[
height=2.1058in,
width=4.0266in
]%
{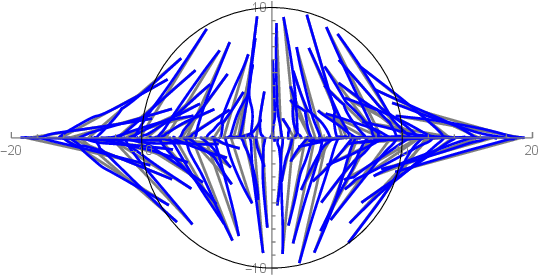}%
\caption{Evolution of the 100 smallest zeros of the GAF under the heat flow
for real times $\tau$ with $0\leq\tau<1$ (blue). The straight-line
approximation $a+\tau\bar{a}$ to each curve is shown in gray. The curves start
in the disk of radius 10 and come close to the $x$-axis as $\tau$ approaches
1.}%
\label{linearapprox.fig}%
\end{figure}

Note that we can rewrite \eqref{eq:distributionZeros} as%
\[
z^{a}(\tau)-\tau\overline{z^{a}(0)}\overset{d}{=}z^{a}(0)+z^{0}(\tau).
\]
That is to say, 
the resulting curves $z^{a}(\tau)-\tau\overline{z^{a}(0)}$ should exhibit relatively small
motion around the initial point $a$; see Figure \ref{fluctuations.fig}.
Theorem \ref{theo:distributionZeros} says that the curves $z^{\alpha}%
(\tau)-\tau\overline{z^{a}(0)}$ all have the same distribution, namely that
of $z^{0}(\tau)$, except shifted by $z^{a}(0).$%

\begin{figure}[ptb]%
\centering
\includegraphics[
height=3.5181in,
width=3.5276in
]%
{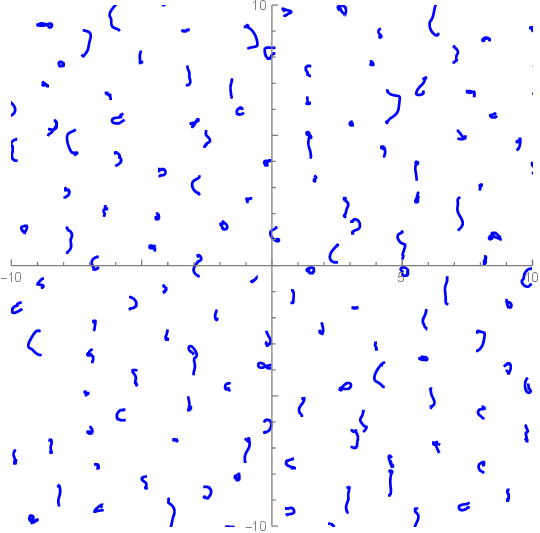}%
\caption{Plots of the curves $z_{j}(t)-\tau\overline{z_{j}(0)},$ where
$z_{j}(0)$ is a zero of the GAF, for $0\leq\tau<1.$ There is a small dot at
the starting point $z_{j}(0)$ of each curve. }%
\label{fluctuations.fig}%
\end{figure}

%
%
%

\subsection{Connection to the group $SL(2;\mathbb{R})$\label{sec:sl2Intro}}

Recall that the Segal--Bargmann space over $\mathbb{C}$ is the Hilbert space
$\mathcal{B}$ of entire holomorphic functions $F$ such that
\begin{equation}
\left\Vert F\right\Vert ^{2}:=\int_{\mathbb{C}}\left\vert F(z)\right\vert
^{2}\frac{e^{-\left\vert z\right\vert ^{2}}}{\pi}~dz<\infty. \label{eq:SBdef}%
\end{equation}
The GAF is then the standard Gaussian measure based on the Segal--Bargmann
space $\mathcal{B}$, in the sense that the functions $z^{n}/\sqrt{n!}$
appearing in the definition (\ref{eq:GAFdef}) form an orthonormal basis for
$\mathcal{B}.$ Note, however, that the GAF does not actually live in
$\mathcal{B}$, since $\sum_{n}\left\vert \xi_{n}\right\vert ^{2}=\infty$
almost surely.

We let $SL(2;\mathbb{R})$ denote the group of real-linear transformations of
the plane with determinant 1. It is known that the zeros of the GAF are
invariant (in distribution) under rotations, but not\footnote{For example, the variance in the central limit theorem for linear statistics of the GAF zeros~\cite[p.~128]{sodin_tsirelson} is the $L^2$-norm of the Laplacian of the test function, which is not invariant under the transformation $(x,y) \mapsto (2x, y/2)$.} under general elements of
$SL(2;\mathbb{R}).$ On the other hand, there is a unitary representation of
the connected double cover of $SL(2;\mathbb{R})$ on the Segal--Bargmann space
known as the metaplectic representation. (See Appendix \ref{meta.appendix}
and, for example, Chapter 4 of \cite{folland_book}.) At a formal level, any
unitary $U$ transformation of $\mathcal{B}$ should induce a transformation
that leaves the GAF invariant in distribution. After all, $U$ just changes
the functions $z^{n}/\sqrt{n!}$ to another orthonormal basis for $\mathcal{B}$
and this should not affect the resulting Gaussian measure. In any case, we
will see directly that all operators in the metaplectic representation do
preserve the GAF\ in distribution, and that all the metaplectic operators can
be built up from the $V_{\tau}$'s and rotations.

The metaplectic representation associates to each $A\in SL(2;\mathbb{R})$ a
pair of operators $\pm V(A)$ differing by a sign and satisfying%
\[
V(AB)=\pm V(A)V(B),\quad A,B\in SL(2;\mathbb{R}).
\]
Let $\mathbb{D}=\{\tau\in\mathbb{C}:|\tau|<1\}$ denote the unit disk.
\begin{proposition}
\label{prop:metaIntro} 

\begin{enumerate}
\item Suppose $\tau\in\mathbb{D}$ and consider the real-linear
transformation of $\mathbb{C}$ to $\mathbb{C}$ given by%
\begin{equation}
z\mapsto\frac{z}{\sqrt{1-\left\vert \tau\right\vert ^{2}}}+\frac{\tau}%
{\sqrt{1-\left\vert \tau\right\vert ^{2}}}\bar{z}, \label{AtauMap}%
\end{equation}
which is represented by the matrix $A_{\tau}$ in $SL(2;\mathbb{R})$ given by%
\begin{equation}
A_{\tau}=\frac{1}{\sqrt{1-\left\vert \tau\right\vert ^{2}}}\left(
\begin{array}
[c]{cc}%
1+\operatorname{Re}\tau & \operatorname{Im}\tau\\
\operatorname{Im}\tau & 1-\operatorname{Re}\tau
\end{array}
\right)  . \label{Atau}%
\end{equation}
Then the sign of $V(A)$ can be chosen so that%
\[
V(A_{\tau})=V_{\tau},
\]
where $V_{\tau}$ is as in (\ref{eq:Vtau}).

\item If
\[
A=\left(
\begin{array}
[c]{rr}%
\cos\theta & -\sin\theta\\
\sin\theta & \cos\theta
\end{array}
\right)
\]
is a rotation by angle $\theta,$ then $V(A)$ acts by rotations, up to a phase
factor. Specifically,
\[
(V(A)F)(z)=\pm e^{-i\theta/2}F(e^{-i\theta}z).
\]

\end{enumerate}
\end{proposition}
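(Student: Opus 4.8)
The plan is to read off both parts from the action of the two basic families of metaplectic operators in the Segal--Bargmann model --- rotations and the ``squeeze'' operators --- matching a factorization of $A_\tau$ inside $SL(2;\mathbb{R})$ (or its complexification) against the factorization of $V_\tau$ that is built into formula (\ref{eq:Vtau}).

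For part (2) I would argue as follows. From the description of the metaplectic representation in Appendix \ref{meta.appendix} (cf.\ Chapter~4 of \cite{folland_book}), the generator of the rotation subgroup of $SL(2;\mathbb{R})$ corresponds in the Segal--Bargmann model, up to a scalar, to the number operator plus the vacuum constant $zD+\tfrac12$. Hence the Fock basis vectors $z^n/\sqrt{n!}$ are eigenvectors of $V(R_\theta)$ with eigenvalue $e^{-i(n+1/2)\theta}$, the sign being fixed by continuity from $\theta=0$. Writing $F(z)=\sum_n c_n z^n$ then gives $V(R_\theta)F=\sum_n c_n e^{-i(n+1/2)\theta}z^n=e^{-i\theta/2}\sum_n c_n (e^{-i\theta}z)^n=e^{-i\theta/2}F(e^{-i\theta}z)$, which is the asserted formula; the $\pm$ records the two sheets of the double cover (and indeed $V(R_{2\pi})=-\mathrm{Id}$). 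Equivalently, one checks directly that $e^{-i\theta(zD+1/2)}F(z)=e^{-i\theta/2}F(e^{-i\theta}z)$ together with the fact that the rotation matrix $R_\theta$ is the image of $zD+\tfrac12$ under the Lie-algebra isomorphism (its symplectic counterpart being the harmonic-oscillator Hamiltonian).

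For part (1) I would first record the $SU(1,1)$-picture: under the standard isomorphism $SL(2;\mathbb{R})\cong SU(1,1)$ the matrix $A_\tau$ is exactly the real-linear map $z\mapsto (z+\tau\bar z)/\sqrt{1-|\tau|^2}$ of (\ref{AtauMap}), and this admits the Gauss (LDU) factorization
\[
A_\tau=\begin{pmatrix}1 & \tau\\ 0 & 1\end{pmatrix}\begin{pmatrix}\sqrt{1-|\tau|^2} & 0\\ 0 & 1/\sqrt{1-|\tau|^2}\end{pmatrix}\begin{pmatrix}1 & 0\\ \bar\tau & 1\end{pmatrix},
\]
valid for every $\tau\in\mathbb{D}$ because the lower-right entry is nonzero there. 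Next I would invoke the appendix's identification of the metaplectic images of the three one-parameter families involved: using the generators $\tfrac12 z^2$ (multiplication), $\tfrac12 D^2$, and $zD+\tfrac12$, the upper-unipotent factor maps to multiplication by $e^{\tau z^2/2}$, the diagonal factor to the dilation $F(z)\mapsto (1-|\tau|^2)^{1/4}F(z\sqrt{1-|\tau|^2})$ (the prefactor being the $e^{s/2}$ coming from $zD+\tfrac12$), and the lower-unipotent factor to the heat operator $e^{-\tau D^2/2}$. Composing these in the order dictated by the factorization reproduces the operator $V_\tau$ of (\ref{eq:Vtau}), so by the projective homomorphism property $V(AB)=\pm V(A)V(B)$ the composition equals $\pm V(A_\tau)$; choosing the sign so that $\tau\mapsto V(A_\tau)$ is continuous on the simply connected set $\mathbb{D}$ and noting that it agrees with the continuous family $V_\tau$ at $\tau=0$, it agrees throughout, which gives $V(A_\tau)=V_\tau$.

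The hard part will be the bookkeeping in part (1): pairing the three generators $\tfrac12 z^2$, $\tfrac12 D^2$, $zD+\tfrac12$ with the correct factors, with the parameters $\tau$, $\bar\tau$, $\sqrt{1-|\tau|^2}$ and the order exactly as in (\ref{eq:Vtau}), while keeping track that these three factors individually lie only in the oscillator semigroup --- in particular $e^{-\tau D^2/2}$ is not unitary for $\tau\notin\mathbb{R}$ --- so that one recovers an operator of the metaplectic group only after all three have been composed. If the explicit composition proves unwieldy, a cleaner route is to verify instead the characterizing intertwining relation: writing $\pi(w)F(z)=e^{\bar w z-|w|^2/2}F(z-w)$ for the Segal--Bargmann (Heisenberg) translations, $V(A)$ is determined up to a scalar by $V(A)\pi(w)V(A)^{-1}=\pi(A\cdot w)$, so one checks this for $V_\tau$ by computing how $e^{-\tau D^2/2}$, multiplication by $e^{\tau z^2/2}$, and the dilation each conjugate $\pi(w)$ --- all one-line Baker--Campbell--Hausdorff computations from $[D^2,z]=2D$ and $[D,z^2]=2z$ --- and then fixes the scalar by testing on the vacuum $\Omega=1$, where $V_\tau\Omega=(1-|\tau|^2)^{1/4}e^{\tau z^2/2}$ is matched against the squeezed-vacuum formula for $V(A_\tau)\Omega$ recorded in the appendix.
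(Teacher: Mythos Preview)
Your argument for Part~(2) is correct and is essentially the appendix's derivation (Example~\ref{MetaRot.ex}); the paper's main-text proof (Proposition~\ref{VAprops.prop}) is different and shorter: it plugs $q=0$, $p=e^{i\theta}$ into the integral kernel formula~(\ref{metaInt}) and invokes the reproducing kernel identity~(\ref{reproKernel}).

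Your plan for Part~(1), however, has a genuine gap. The LDU factorization you wrote down is a factorization in $SL(2;\mathbb{C})$: none of the three factors $\begin{pmatrix}1&\tau\\0&1\end{pmatrix}$, $\mathrm{diag}(\sqrt{1-|\tau|^2},1/\sqrt{1-|\tau|^2})$, $\begin{pmatrix}1&0\\\bar\tau&1\end{pmatrix}$ lies in $SU(1,1)$ for generic $\tau$, so the homomorphism $V(AB)=\pm V(A)V(B)$ does not apply to them. You flag this, but your proposed operator assignments are actually wrong: a careful Lie-algebra computation from $dV(\mathrm{diag}(i,-i))=-i(zD+\tfrac12)$ and $dV(\begin{smallmatrix}0&1\\1&0\end{smallmatrix})=(z^2-D^2)/2$ (which follow from~(\ref{Vrot}) and~(\ref{Adiag})) gives $dV(\begin{smallmatrix}0&1\\0&0\end{smallmatrix})=-D^2/2$ and $dV(\begin{smallmatrix}0&0\\1&0\end{smallmatrix})=z^2/2$, so the upper-unipotent factor corresponds to the heat operator and the lower-unipotent to Gaussian multiplication --- the reverse of what you wrote. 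Even after swapping, composing the three formal exponentials does not directly reproduce $V_\tau$ (you pick up a $\bar\tau$ where you need a $\tau$ and the dilation goes the wrong way), and untangling this requires Proposition~\ref{prop:heat_flow_and_mult_by_exp}, at which point you have done more work than the paper.

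The paper's route is a one-line kernel comparison: from the definition of $V_\tau$ and Theorem~\ref{theo:heatIntOp} one obtains the integral formula~(\ref{VtauIntOp}) for $V_\tau$; for the matrix $A_\tau$ one reads off $p=1/\sqrt{1-|\tau|^2}$ and $q=\tau/\sqrt{1-|\tau|^2}$, so $q/p=\tau$ and $1/p=\sqrt{1-|\tau|^2}$, whence the kernel in~(\ref{metaInt}) coincides with~(\ref{VtauIntOp}). This avoids the oscillator semigroup entirely. Your intertwining fallback would work but is still more involved than this.
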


The matrix $A_{\tau}$ is symmetric and positive definite for all $\tau$ in the
unit disk $\mathbb{D},$ and as $\tau$ varies over $\mathbb{D},$ every
symmetric, positive element of $SL(2;\mathbb{R})$ arises. Since every element
of $SL(2;\mathbb{R})$ can be written as the product of a rotation and a
positive, symmetric element, the proposition tells us how to compute
\textit{all} the elements $V(A),$ $A\in SL(2;\mathbb{R}).$

\begin{proposition}
We have the following results.

\begin{enumerate}
\item For every $A\in SL(2;\mathbb{R}),$ the operator $V(A)$ preserves the GAF
in distribution.

\item Suppose we factor $A$ uniquely as%
\begin{equation}
A=\left(
\begin{array}
[c]{rr}%
\cos\theta & -\sin\theta\\
\sin\theta & \cos\theta
\end{array}
\right)  A_{\tau}, \label{Afact}%
\end{equation}
where $A_{\tau}$ is a positive symmetric matrix written in the form
(\ref{Atau}), and we write the zeros of $e^{-\tau D^{2}/2}G$ as $\{z_{j}%
(\tau)\}.$ Then the zeros $\mathcal{Z}(V(A)G)$ of $V(A)G$ are related to the
zeros of $e^{-\tau D^{2}/2}G$ by
\[
\mathcal{Z}(V(A)G)=\left\{  \left.  e^{i\theta}\frac{z}{\sqrt{1-\left\vert
\tau\right\vert ^{2}}}\right\vert z\in\mathcal{Z}(e^{-\tau D^{2}/2}G)\right\}
.
\]

\item Recall the definition of $z^{a}(\tau)$ in Definition \ref{zaTau.def} and
define $u^{a}(A)$ by%
\[
u^{a}(A)=e^{i\theta}\frac{z^{a}(\tau)}{\sqrt{1-\left\vert \tau\right\vert
^{2}}},
\]
where $A\in SL(2;\mathbb{R)}$ is factored as in (\ref{Afact}). Then we have
the following equality in distribution%
\begin{equation}
u^{a}(A)=Aa+u^{0}(A), \label{uaDist}%
\end{equation}
where $Aa$ denotes the action of $A$ on $a,$ where $a$ is viewed as an element
of $\mathbb{R}^{2}.$
\end{enumerate}
\end{proposition}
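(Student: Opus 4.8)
The plan is to reduce everything to the polar decomposition in $SL(2;\mathbb{R})$ — every element is a rotation times a positive symmetric matrix — together with the results already in hand: Theorems~\ref{theo:GAF_invariant_generalization_zeros_introduction} and~\ref{theo:distributionZeros}, and the explicit description of the metaplectic operators in Proposition~\ref{prop:metaIntro}. By the remark following that proposition, every $A\in SL(2;\mathbb{R})$ factors uniquely as in~\eqref{Afact}, $A=R_\theta A_\tau$, where $R_\theta$ is the rotation by a unique angle $\theta$ and $A_\tau$ is the positive symmetric matrix~\eqref{Atau} attached to a unique $\tau\in\mathbb{D}$. Since $\tau\in\mathbb{D}$, the heat flow is defined on $G$ and $V_\tau G$ makes sense by Theorem~\ref{theo:GAF_invariant_generalization_zeros_introduction}; using Proposition~\ref{prop:metaIntro} we may therefore take
\[
V(A)=\pm V(R_\theta)V_\tau,\qquad (V(A)G)(z)=\pm e^{-i\theta/2}(V_\tau G)\bigl(e^{-i\theta}z\bigr),
\]
as the definition of $V(A)$ acting on the GAF, which on the Segal--Bargmann space $\mathcal{B}$ agrees, via the metaplectic homomorphism law $V(AB)=\pm V(A)V(B)$, with the usual metaplectic operator.

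\emph{Part (1).} Here I would use Theorem~\ref{theo:GAF_invariant_generalization_zeros_introduction}, which gives $V_\tau G\overset{d}{=}G$. Since $F\mapsto e^{-i\theta/2}F(e^{-i\theta}\,\cdot)$ is a fixed continuous transformation of entire functions, $V(R_\theta)(V_\tau G)\overset{d}{=}V(R_\theta)G$, so it remains to check $V(R_\theta)G\overset{d}{=}G$. From~\eqref{eq:GAF_covariance},
\[
\mathbb{E}\bigl[G(e^{-i\theta}z)\,\overline{G(e^{-i\theta}w)}\bigr]=\exp\{e^{-i\theta}z\,\overline{e^{-i\theta}w}\}=\exp\{z\bar w\},
\]
so $G(e^{-i\theta}\,\cdot)\overset{d}{=}G$, and multiplying by the unimodular scalar $\pm e^{-i\theta/2}$ does not change the covariance structure of a centered complex Gaussian process, hence not its law. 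Putting these together gives $V(A)G\overset{d}{=}G$; the sign ambiguity is harmless since $G\overset{d}{=}-G$.

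\emph{Parts (2) and (3).} For~(2) I would simply unwind~\eqref{eq:Vtau}: the prefactor $(1-|\tau|^2)^{1/4}e^{\tau z^2/2}$ is nowhere zero, so $z$ is a zero of $V_\tau G$ precisely when $z\sqrt{1-|\tau|^2}\in\mathcal{Z}(e^{-\tau D^2/2}G)$, i.e.\ $\mathcal{Z}(V_\tau G)=\mathcal{Z}(e^{-\tau D^2/2}G)/\sqrt{1-|\tau|^2}$; applying $V(R_\theta)$ replaces the argument $z$ by $e^{-i\theta}z$ and hence multiplies every zero by $e^{i\theta}$, which is exactly the claimed description of $\mathcal{Z}(V(A)G)$. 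For~(3), $u^a(A)=e^{i\theta}z^a(\tau)/\sqrt{1-|\tau|^2}$ with $\theta,\tau$ the deterministic data of~\eqref{Afact}, so Theorem~\ref{theo:distributionZeros} yields
\[
u^a(A)\overset{d}{=}e^{i\theta}\frac{a+\tau\bar a}{\sqrt{1-|\tau|^2}}+e^{i\theta}\frac{z^0(\tau)}{\sqrt{1-|\tau|^2}}=e^{i\theta}\frac{a+\tau\bar a}{\sqrt{1-|\tau|^2}}+u^0(A),
\]
and by Proposition~\ref{prop:metaIntro}(1) the map $A_\tau$ sends $a\in\mathbb{C}\cong\mathbb{R}^2$ to $(a+\tau\bar a)/\sqrt{1-|\tau|^2}$ while $R_\theta$ sends it to $e^{i\theta}a$, so the first term equals $R_\theta A_\tau a=Aa$, giving~\eqref{uaDist}. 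As with $z^a(\tau)$, this is to be read for those $A$ near the identity for which $z^a(\tau)$ is defined (cf.\ the discussion after Theorem~\ref{theo:distributionZeros}).

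\emph{Main obstacle.} The genuinely delicate point is not the bookkeeping above but the first step: the metaplectic representation and the relation $V(AB)=\pm V(A)V(B)$ are statements about unitaries on $\mathcal{B}$, whereas here the operators must act on $G\notin\mathcal{B}$ and, in~(2), on the entire function $V_\tau G$. I expect most of the work to lie in pinning down a common function space — entire functions of order at most $2$ with the growth bound under which the heat flow of Section~\ref{sec:heatOp} converges — on which $V_\tau$ and the rotation operators are defined and continuous, and in checking that the identity $V(A)=\pm V(R_\theta)V_\tau$, valid on the dense subspace of polynomials by the $\mathcal{B}$-level homomorphism law, persists there.
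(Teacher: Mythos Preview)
Your approach is correct and essentially the same as the paper's: factor $A=R_\theta A_\tau$, use Theorem~\ref{theo:GAF_invariant_generalization_zeros_introduction} (equivalently Theorem~\ref{theo:GAF_invariant}) for the $V_\tau$ piece and rotation invariance for the $R_\theta$ piece to get Part~(1), unwind~\eqref{eq:Vtau} for Part~(2), and feed Theorem~\ref{theo:distributionZeros} through the scaling and rotation together with the identification~\eqref{AtauMap} of $A_\tau a$ for Part~(3).

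One comment on your ``main obstacle'': you are overestimating the difficulty here. In the paper $V_\tau$ is \emph{defined} by the explicit formula~\eqref{VtauDef} on entire functions of order $\le 2$ and type $\le 1/2$ (with the heat flow justified by Theorem~\ref{theo:entire_function_heat_flow_exists}), not as the extension of a unitary from $\mathcal{B}$; the rotation operator is trivially defined on all entire functions. The identity $V(A)=\pm V(R_\theta)V_\tau$ is derived on $\mathcal{B}$ via the metaplectic homomorphism (Proposition~\ref{VAprops.prop}), but once you have the explicit formula~\eqref{VaVtau} it simply \emph{is} a composition of two operators each already defined on the GAF almost surely. So no density or continuity argument is needed---the formula side-steps the issue entirely.
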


As in (\ref{eq:zjTraj}) 
we rewrite (\ref{uaDist}) in
rough form as%
\[
u^{a}(A)\approx Aa,
\]
to indicate that the error term $u^{0}(A)$ has a fixed distribution whose size
is independent of $A.$ Thus, in a precisely formulated sense (given in
(\ref{uaDist})), the action of $V(A)$ on the zeros of the GAF is \textit{approximately} 
the ordinary action of $SL(2;\mathbb{R})$ on $\mathbb{C\cong R}^{2}.$

\section{Heat flow on entire functions of order at most 2\label{sec:heatOp}}

In this section, we define the heat flow on entire functions
(with suitable growth rates at infinity) and state some of its properties.
Related results have been obtained by Papanicolaou, Kallitsi, and Smyrlis in
\cite{PKS}. The main difference between our results and theirs is that they
focus on solutions $F(z,\tau)$ of the heat equation that are entire
holomorphic functions of \textit{both} $z$ and $\tau.$ By contrast, we
consider solutions that are entire in $z$ but may be defined only for $\tau$
in a disk of finite radius. This approach allows us to consider functions
where $F(z,0)$ has order 2 and arbitrary finite type, whereas in \cite{PKS}
the functions are assumed to be of order less than 2 or order 2 and type 0.
Our approach is motivated by applications to the Gaussian analytic function,
which has order 2 and type $1/2.$ \newline

The reader who is interested primarily in the results about the Gaussian
analytic function may skim most of this section, but noting one key
result (Theorem \ref{theo:entire_function_heat_flow_exists}): that the heat
operator $e^{-\tau D^{2}/2}$ makes sense when applied to an entire function of
order 2 and type $\sigma,$ provided that $\left\vert \tau\right\vert
<1/(2\sigma).$

\subsection{Order and type of a holomorphic function}

Let us first recall some notions from the theory of entire functions;
see~\cite{levin_book_lectures,levin_book_distribution,boas_book},
\cite[Chapter~XI]{conway_book1} for more details. Let $F(z)=\sum_{n=0}%
^{\infty}a_{n}z^{n}$ be an entire function. Define 
\[M(r)=\sup_{|z|=r}|F(z)|,\quad r>0.
\] 
The \emph{order} of $F$, denoted by $\rho\in\lbrack0,+\infty]$, is the
infimum of all numbers $\mu>0$ with the property that $M(r)<\exp\{r^{\mu}\}$
for all sufficiently large $r>0$. If the order $\rho$ is finite, then the
\emph{type} of $F$, denoted by $\sigma\in\lbrack0,+\infty]$, is the infimum of
all numbers $a\geq0$ with the property that $M(r)\leq\exp\{ar^{\rho}\}$ for
all sufficiently large $r>0$. Equivalently, we have
\[
\rho=\limsup_{r\rightarrow\infty}\frac{\log\log M(r)}{\log r}\in
\lbrack0,+\infty],\qquad\sigma=\limsup_{r\rightarrow\infty}\frac{\log
M(r)}{r^{\rho}}\in\lbrack0,+\infty].
\]
It is known, see~\cite[Theorems~2,3 on p.~6]{levin_book_lectures}
or~\cite[Theorem~2 on p.~4]{levin_book_distribution}, that
\begin{equation}
\rho=\limsup_{n\rightarrow\infty}\frac{n\log n}{\log|1/a_{n}|},\qquad
(\sigma\mathrm{e}\rho)^{1/\rho}=\limsup_{n\rightarrow\infty}n^{1/\rho}%
|a_{n}|^{1/n}. \label{eq:lindeloef_pringsheim}%
\end{equation}
where for the second formula we assume that the order $\rho$ is finite and non-zero.

Using these formulae one easily checks that for the plane GAF we have $\rho=2$
and $\sigma=1/2$ a.s. More precisely, we recall that $a_n = \xi_n / \sqrt{n!}$, note that $\log (n!) = n \log n - n + o(n)$ by the Stirling formula and $\log |\xi_n| = o(n)$ a.s.,\ which follows from the Borel-Cantelli lemma together with the observation that the random variables $|\xi_0|^2, |\xi_1|^2,\ldots$ are standard exponential.

\subsection{Definition of the heat flow}

The main result of this section is that we can make sense of the heat operator applied to a holomorphic function $F$ in certain cases. 



\begin{notation}
\label{s1s2.notation}For an entire holomorphic function $F,$ we will
distinguish two situations:

\emph{(S1)}: $F$ is an entire function of order $\rho<2$

\emph{(S2)}: $F$ is an entire function of order $\rho=2$ and finite type $\sigma
\geq0$.
\end{notation}

We keep in mind that in both situations there is some finite $\sigma_{0}\geq0$
such that for every $\varepsilon>0$ we have a growth of order
\begin{equation}
M(r)\leq\mathrm{e}^{(\sigma_{0}+\varepsilon)r^{2}} \label{eq:assumption_F}%
\end{equation}
provided $r>r(\varepsilon)$ is sufficiently large. In situation (S1),
\eqref{eq:assumption_F} holds with $\sigma_{0}=0$, while for (S2) it holds
with $\sigma_{0}=\sigma$.

In this section, we will show that the heat operator can be
defined on any holomorphic function $F$ in situation (S1) and on a holomorphic
function $F$ in situation (S2) provided that $\left\vert \tau\right\vert
<1/(2\sigma).$ We will give four equivalent ways of the defining $\exp\{-\tau
D^{2}/2\}F$ in these cases:

\begin{itemize}
\item As a power series in powers of $D^{2},$

\item As a term-by-term action on the Taylor series of $F,$

\item As an integral operator over the real line, and

\item As an integral operator over the plane with respect to a Gaussian measure.
\end{itemize}

The two most obvious ways to define $(e^{-\tau D^{2}/2}F)(z)$ for an
entire function $F(z)=\sum_{n=0}^{\infty}a_{n}z^{n}$ are the following. First, to
develop $e^{-\tau D^{2}/2}$ into a formal Taylor series in powers of $D^{2}$
and apply each term to $F$. Second, to apply the operator $e^{-\tau
D^{2}/2}$ termwise to each summand $a_{n}z^{n}$ using the formula
\begin{equation}
e^{-\tau D^{2}/2}(z^{n})=\tau^{n/2}\operatorname{He}_{n}\left(  \frac{z}%
{\sqrt{\tau}}\right)  ,\qquad\tau\in\mathbb{C},
\label{eq:eq:hermite_exp_on_z^n_with_sigma}%
\end{equation}
where $\operatorname{He}_{n}$ denotes the $n$-th Hermite polynomial defined
by~\eqref{eq:hermite_poly_def} and the right-hand side
of~\eqref{eq:eq:hermite_exp_on_z^n_with_sigma} is a polynomial of $\tau$ (not
of $\sqrt{\tau}$), as follows from~\eqref{eq:hermite_poly_def}. Here and in
the following we view the right hand side for $\tau=0$ as the analytic
continuation of $\tau^{n/2}\operatorname{He}_{n}\left(  \frac{z}{\sqrt{\tau}%
}\right)  $, namely $z^{n}$. Both ways to define the heat flow of $F$ lead to
the same result, as the next theorem shows.

\begin{theorem}
\label{theo:entire_function_heat_flow_exists} Let $F(z)=\sum_{n=0}^{\infty
}a_{n}z^{n}$ be an entire function satisfying (S1) or (S2). Let $\tau$ be a
complex number and assume $\left\vert \tau\right\vert <1/(2\sigma)$ in
situation (S2). Then the two series
\begin{align}
e^{-\tau D^{2}/2}F(z)  &  =\sum_{k=0}^{\infty}\frac{1}{k!}\left(  -\frac{\tau
}{2}D^{2}\right)  ^{k}F(z),\label{eq:heat_flow_def_1}\\
e^{-\tau D^{2}/2}F(z)  &  =\sum_{n=0}^{\infty}a_{n}(\sqrt{\tau})^{n}%
\operatorname{He}_{n}\left(  \frac{z}{\sqrt{\tau}}\right)  .
\label{eq:heat_flow_def_2}%
\end{align}
both converge to the same limit for all $z\in\mathbb{C}.$ In
(\ref{eq:heat_flow_def_2}), either of the two square roots of $\tau$ may be
used. The convergence is absolute and uniform in $z$ and $\tau$ if

(i) $\left\vert z\right\vert \leq C_{1}$ and $\left\vert \tau\right\vert \leq
C_{2}$ for any constants $C_{1},C_{2}>0$ in situation (S1),

(ii) $\left\vert z\right\vert \leq C_{1}$ and $\left\vert \tau\right\vert \leq
C_{2}$ for any $C_{1}>0$ and any $C_{2}<1/(2\sigma)$ in situation (S2).

In particular, both series define the same entire function of the two
variables $(z,\tau)$ in the domain $\mathbb{C}^{2}$ for (S1) and $\mathbb{C}%
\times\{|\tau|<1/(2\sigma)\}$ for (S2).
\end{theorem}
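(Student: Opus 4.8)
The plan is to establish the two series representations and their agreement via a single quantitative estimate on the Taylor coefficients of $F$, then deduce convergence and analyticity as a corollary. First I would record the coefficient bound coming from Cauchy's inequalities together with the growth hypothesis \eqref{eq:assumption_F}: for every $\varepsilon>0$ there is $C_\varepsilon$ with $|a_n|\le C_\varepsilon (\sigma_0+\varepsilon)^{n/2} (2\mathrm e/n)^{n/2}$ (in situation (S1) one may take $\sigma_0=0$, so $|a_n|\le C_\varepsilon \varepsilon^{n/2}(2\mathrm e/n)^{n/2}$ for \emph{every} $\varepsilon>0$). This follows by optimizing $M(r)/r^n$ over $r$ using $M(r)\le\mathrm e^{(\sigma_0+\varepsilon)r^2}$, with the optimal radius $r^2=n/(2(\sigma_0+\varepsilon))$. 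This is the decisive estimate, and it is exactly why the type enters the radius of convergence $1/(2\sigma)$.

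Next I would handle representation \eqref{eq:heat_flow_def_2} directly. Using the explicit formula \eqref{eq:hermite_poly_def} for $\operatorname{He}_n$, one has the crude bound $|\operatorname{He}_n(w)|\le n!\sum_{m=0}^{[n/2]}\frac{|w|^{n-2m}}{m!2^m(n-2m)!}\le (|w|+1)^n \cdot (\text{something like } \sqrt{n!}\,)$ — more precisely I would use the standard estimate $|\operatorname{He}_n(w)|\le \sqrt{n!}\,(1+|w|)^n$ or the generating-function bound $|\operatorname{He}_n(w)|\le n!\,\mathrm e^{|w|^2/2}/(\text{...})$; the cleanest is to bound the general term of \eqref{eq:heat_flow_def_2} after substituting $w=z/\sqrt\tau$, so that $(\sqrt\tau)^n\operatorname{He}_n(z/\sqrt\tau) = n!\sum_m \frac{(-\tau/2)^m}{m!}\frac{z^{n-2m}}{(n-2m)!}$ manifestly has no $\sqrt\tau$ and is a polynomial in $\tau$. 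Plugging in the coefficient bound and summing the double series $\sum_n\sum_m |a_n| n! \frac{(|\tau|/2)^m}{m!}\frac{|z|^{n-2m}}{(n-2m)!}$, I would reorganize it (say by setting $k=m$, $\ell=n-2m$) and compare term-by-term with the product of $\mathrm e^{|\tau| D^2/2}$-type series; the Stirling-type factor $n!\le \mathrm e^n n^n/\sqrt{2\pi n}\cdot(\text{const})$ combines with $(2\mathrm e/n)^{n/2}$ to leave a net $(C n)^{n/2}$ which is killed by $(\sigma_0+\varepsilon)^{n/2}$ times the remaining factorials precisely when $|\tau|(\sigma_0+\varepsilon)<1/2$, i.e.\ $|\tau|<1/(2\sigma)$ after letting $\varepsilon\downarrow 0$. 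In situation (S1) the freedom to take $\varepsilon$ arbitrarily small makes the sum finite for every $\tau\in\mathbb C$. This yields absolute and uniform convergence on the stated compact sets, and Weierstrass's theorem then gives that the limit is entire in $(z,\tau)$ on the claimed domain; analyticity in $\tau$ (not merely $\sqrt\tau$) is automatic since each summand is already a polynomial in $\tau$.

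For representation \eqref{eq:heat_flow_def_1}, I would argue that the partial sums $\sum_{k=0}^{K}\frac{1}{k!}(-\tfrac\tau2 D^2)^k F$ are exactly the partial sums obtained by grouping \eqref{eq:heat_flow_def_2} by powers of $\tau$ (since $\sum_n a_n (\sqrt\tau)^n\operatorname{He}_n(z/\sqrt\tau)=\sum_k \frac{(-\tau/2)^k}{k!}\sum_n a_n \frac{n!}{(n-2k)!}z^{n-2k} = \sum_k \frac{1}{k!}(-\tfrac\tau2 D^2)^k F(z)$, the inner sum being $D^{2k}F$). Because the double series is absolutely convergent on the relevant region by the previous step, Fubini/Tonelli for series lets me rearrange freely, so \eqref{eq:heat_flow_def_1} converges (absolutely, uniformly on the same compacts) to the same limit, and in particular $D^{2k}F$ has the right growth for the outer series to make sense. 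I would also note briefly that the choice of square root of $\tau$ in \eqref{eq:heat_flow_def_2} is immaterial because the $n$ odd and $n$ even parts are each invariant under $\sqrt\tau\mapsto-\sqrt\tau$ once combined with $\operatorname{He}_n$, or simply because the final expression is a power series in $\tau$.

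The main obstacle I anticipate is purely bookkeeping in the double-sum estimate: getting the Stirling factors to line up so that the threshold comes out as exactly $|\tau|<1/(2\sigma)$ rather than some non-sharp constant, and making the $\varepsilon\downarrow 0$ passage rigorous (one fixes $C_2<1/(2\sigma)$, then picks $\varepsilon$ small enough that $C_2(\sigma+\varepsilon)<1/2$, and the resulting geometric-type tail bound is uniform in $|\tau|\le C_2$ and $|z|\le C_1$). Everything else — termwise identification of the two series, Weierstrass for the analyticity conclusion, the square-root remark — is routine once the estimate is in hand.
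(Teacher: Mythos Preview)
Your plan is correct and follows the same architecture as the paper's proof: both arguments reduce to the absolute convergence of the double series
\[
\sum_{n=0}^{\infty}\sum_{m=0}^{[n/2]}|a_{n}|\,\frac{n!}{2^{m}m!(n-2m)!}\,|z|^{n-2m}|\tau|^{m},
\]
then invoke Fubini to identify the two regroupings \eqref{eq:heat_flow_def_1} and \eqref{eq:heat_flow_def_2}. The coefficient bound you derive from Cauchy's inequalities and the optimal radius $r^2=n/(2(\sigma_0+\varepsilon))$ is exactly the content of the Lindel\"of--Pringsheim formula~\eqref{eq:lindeloef_pringsheim} that the paper quotes.

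The one genuine difference is in how the size of the inner Hermite-type sum is controlled. The paper recognizes the absolute-value series as $\sum_n |a_n|\,|\tau|^{n/2}\operatorname{He}_n(|z|/\sqrt{|\tau|})$ (with the sign-free variant of $\operatorname{He}_n$) and invokes the Plancherel--Rotach asymptotics $\operatorname{He}_n(x)=(n/\mathrm e)^{n/2}\mathrm e^{o(n)}$ from \cite{szegoe_book}; this is clean but only locally uniform in the argument, so the paper has to patch the region $|\tau|\le\varepsilon$ separately via the Cauchy formula. Your route---substituting the coefficient bound and Stirling directly into the double sum---is more elementary (no external Hermite asymptotics needed) and, because each term is already polynomial in $\tau$, is automatically uniform down to $\tau=0$ without any patching. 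The price is the bookkeeping you flag: one has to be careful that the Stirling factors combine to give the sharp threshold $|\tau|<1/(2\sigma)$ rather than a worse constant, which they do once you fix $C_2<1/(2\sigma)$ and choose $\varepsilon$ with $2C_2(\sigma+\varepsilon)<1$. Your earlier tentative bound $|\operatorname{He}_n(w)|\le\sqrt{n!}\,(1+|w|)^n$ would \emph{not} suffice (it blows up as $\tau\to 0$), so you are right to abandon it in favor of the double-sum estimate.
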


\begin{proof}
We start by analyzing~\eqref{eq:heat_flow_def_2}. To this end, we need the
well-known asymptotic equivalence
\[
\operatorname{He}_{n}(x)\sim e^{x^{2}/4}2^{n/2}\pi^{-1/2}\Gamma\left(
\frac{n+1}{2}\right)  \cos\left(  x\sqrt{n}-\frac{\pi n}{2}\right)  ,
\]
which holds as $n\rightarrow\infty$ locally uniformly in $x\in\mathbb{C}$, see
\cite[\S 18.15(v)]{NIST} and~\cite[Theorem~8.22.7]{szegoe_book}. Observing
that $\cos(\ldots)=\mathrm{e}^{o(n)}$ and using the Stirling formula, we can
write
\[
\operatorname{He}_{n}(x)=(n/\mathrm{e})^{n/2}\mathrm{e}^{o(n)},\qquad
n\rightarrow\infty,
\]
locally uniformly in $x\in\mathbb{C}$. Assume that $\rho<2$. Then,
by~\eqref{eq:lindeloef_pringsheim} there is an $\varepsilon>0$ such that
$|a_{n}|\leq n^{-(1+\varepsilon)n/2}$ for all sufficiently large $n$. It
follows that
\[
\left\vert a_{n}\tau^{n/2}\operatorname{He}_{n}\left(  \frac{z}{\sqrt{\tau}%
}\right)  \right\vert \leq e^{o(n)}|\tau|^{n/2}(n/e)^{n/2}n^{-(1+\varepsilon
)n/2}=n^{-\varepsilon n/2}e^{O(n)},
\]
which is a summable sequence. The $O$-term is uniform in $|z|<C$ and $|\tau|<C$
as long as $\tau$ stays away from $0$. The uniform absolute convergence of the series~\eqref{eq:heat_flow_def_2} when $\tau$ is in a small disk
around $0$ can be proved by obtaining an estimate using the Cauchy formula. This proves part~(i) of the
theorem for the series~\eqref{eq:heat_flow_def_2}.

Assume now that $\rho=2$ and, additionally, the type $\sigma$ is finite. Then,
by~\eqref{eq:lindeloef_pringsheim} we have
\[
|a_{n}|\leq n^{-n/2}(2e\sigma+o(1))^{n/2}.
\]
It follows that
\begin{align*}
\left\vert a_{n}\tau^{n/2}\operatorname{He}_{n}\left(  \frac{z}{\sqrt{\tau}%
}\right)  \right\vert &\leq n^{-n/2}(2e\sigma+o(1))^{n/2}\cdot|\tau|^{n/2}%
\cdot(n/e)^{n/2}e^{o(n)}\\
&=(2\sigma+o(1))^{n/2}|\tau|^{n/2},
\end{align*}
which is summable for $|\tau|<1/(2\sigma)$. For $\sigma\in(0,\infty)$,the
convergence of the series $\sum (2\sigma+o(1))^{n/2}|\tau|^{n/2}$ is uniform as long as $\varepsilon<|\tau|<1/(2\sigma)-\varepsilon$
and $|z|<C$ for some $\varepsilon>0$ and $C>0$. For $\sigma=0$, the convergence is uniform
as long as $\varepsilon<|\tau|<C$ and $|z|<C$ for some $\varepsilon>0$ and
$C>0$. This shows that the convergence of the right-hand side of~\eqref{eq:heat_flow_def_2} is absolute and uniform as long as $\varepsilon<|\tau|<1/(2\sigma)-\varepsilon$
and $|z|<C$ in the $\sigma\in(0,\infty)$ case and as long as $\varepsilon<|\tau|<C$ and $|z|<C$ for some $\varepsilon>0$ and $C>0$ in the $\sigma = 0$ case. To complete the uniform absolute convergence of the right-hand side of~\eqref{eq:heat_flow_def_2}, we can obtain an estimate to the domain $|\tau|\leq\varepsilon$ using Cauchy formula. This proves part~(ii) of the theorem for the series~\eqref{eq:heat_flow_def_2}.

To argue that the right-hand sides of~\eqref{eq:heat_flow_def_1}
and~\eqref{eq:heat_flow_def_2} are equal and to prove the theorem for the
series~\eqref{eq:heat_flow_def_2}, we consider the formal double series
\begin{equation}
\mathrm{e}^{-\tau D^{2}/2}F(z)=\sum_{n=0}^{\infty}\sum_{m=0}^{[n/2]}a_{n}%
\frac{(-1)^{m}n!}{2^{m}m!(n-2m)!}z^{n-2m}\tau^{m}. \label{eq:heat_flow_def_3}%
\end{equation}
and observe that both, \eqref{eq:heat_flow_def_1}
and~\eqref{eq:heat_flow_def_2}, can be obtained by re-grouping the terms
in~\eqref{eq:heat_flow_def_3}. It suffices to show
that~\eqref{eq:heat_flow_def_3} converges absolutely and locally uniformly on
the respective domain of the variables $(z,t)$. Taking the absolute values of
the terms in~\eqref{eq:heat_flow_def_3} results in the series
\[
\sum_{n=0}^{\infty}\sum_{m=0}^{[n/2]}|a_{n}|\frac{n!}{2^{m}m!(n-2m)!}%
|z|^{n-2m}|\tau|^{m}=\sum_{n=0}^{\infty}|a_{n}||\tau|^{n/2}\operatorname{He}%
_{n}\left(  \frac{|z|}{\sqrt{|\tau|}}\right)  ,
\]
where the right-hand side is obtained by re-grouping the terms in the
left-hand side, which is justified because all terms are non-negative. The
absolute and locally uniform (in the respective domain) convergence of this
series has been shown above.
\end{proof}

When $\tau$ is real and positive, can restrict $F$ to the real axis, apply the
usual real-variables heat operator (computed as convolution with a Gaussian),
and then holomorphically extend the result back to the complex plane.

\begin{theorem}
\label{prop:heat_flow_as_convolution}Suppose $\tau$ is real and positive,
where we assume $\tau<1/(2\sigma)$ in situation (S2). Then (reversing the
usual sign in the exponent of the heat operator) we have
\begin{equation}
(e^{+\tau D^{2}/2}F)(z)=\frac{1}{\sqrt{2\pi\tau}}\int_{\mathbb{R}%
}e^{-(z-x)^{2}/(2\tau)}F(x)~dx,\quad\tau>0,~z\in\mathbb{C}.
\label{eq:convolveR}%
\end{equation}
For general $\tau=\left\vert \tau\right\vert e^{i\theta}$ (satisfying
$\left\vert \tau\right\vert <1/(2\sigma)$ in situation (S2)), we have
(reverting to our usual sign in the exponent)%
\begin{equation}
(e^{-\tau D^{2}/2}F)(z)=\frac{1}{\sqrt{2\pi\left\vert \tau\right\vert }}%
\int_{\mathbb{R}}\exp\left\{  -\left(  -i e^{-i\theta/2}z-x\right)
^{2}/(2\left\vert \tau\right\vert )\right\}  F(ie^{i\theta/2}x)~dx.
\label{eq:convolveR2}%
\end{equation}

\end{theorem}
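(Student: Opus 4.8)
The plan is to prove \eqref{eq:convolveR} first and then to deduce \eqref{eq:convolveR2} from it using the behaviour of the heat operator under complex dilations. For \eqref{eq:convolveR} I would proceed in three steps: (i) verify the identity when $F$ is a monomial $z^{n}$; (ii) extend to polynomials by linearity; (iii) pass to a general entire function $F$ in situation (S1) or (S2) by a dominated-convergence argument against a fixed majorant. Along the way the right-hand side of \eqref{eq:convolveR} will be seen to converge absolutely and locally uniformly in $z$, so that it defines an entire function, which we already know the left-hand side to be by Theorem~\ref{theo:entire_function_heat_flow_exists}.

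For step (i), fix $\tau>0$. For any polynomial $p$ one has the classical Gaussian integral identity
\[
\frac{1}{\sqrt{2\pi\tau}}\int_{\mathbb{R}}e^{-(z-x)^{2}/(2\tau)}p(x)\,dx=\mathbb{E}\bigl[p\bigl(z+\sqrt{\tau}\,W\bigr)\bigr]=\bigl(e^{+\tau D^{2}/2}p\bigr)(z),\qquad z\in\mathbb{C},
\]
where $W$ has the standard real Gaussian law. The first equality is immediate for real $z$ (substitute $x=z+\sqrt{\tau}\,w$), and since both sides are polynomials in $z$ it extends to all $z\in\mathbb{C}$ by the identity theorem. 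The second equality is the standard fact that $e^{+\tau D^{2}/2}$ acts on polynomials by the heat semigroup; one checks it either on the exponentials $p(z)=e^{\lambda z}$ (eigenfunctions of $D^{2}$) or by a direct computation with the power series, comparing with \eqref{eq:hermite_poly_def}. Taking $p(x)=x^{n}$ gives step (i), and summing finitely many such identities gives step (ii).

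For step (iii), write $F(z)=\sum_{n}a_{n}z^{n}$ and introduce the majorant $\Phi(r):=\sum_{n}|a_{n}|r^{n}$. Because the order and the type of an entire function depend only on the moduli $|a_{n}|$ through \eqref{eq:lindeloef_pringsheim}, the function $\Phi$ is entire with the same order as $F$ and, in situation (S2), the same type $\sigma$; in particular $\Phi(r)\le e^{(\sigma+\varepsilon)r^{2}}$ for every $\varepsilon>0$ and all large $r$ (with $\sigma=0$ in situation (S1)). Writing $z=u+iv$ we have $\bigl|e^{-(z-x)^{2}/(2\tau)}\bigr|=e^{v^{2}/(2\tau)}\,e^{-(u-x)^{2}/(2\tau)}$, and the elementary inequality $(u-x)^{2}\ge(1-\delta)x^{2}-u^{2}/\delta$ yields, for $|x|$ large,
\[
\Phi(|x|)\,\bigl|e^{-(z-x)^{2}/(2\tau)}\bigr|\le C(z,\varepsilon,\delta)\,\exp\Bigl\{\bigl(\sigma+\varepsilon-\tfrac{1-\delta}{2\tau}\bigr)x^{2}\Bigr\}.
\]
This is the one place where the hypothesis is used essentially: since $\tau<1/(2\sigma)$, i.e.\ $\sigma<1/(2\tau)$, we can pick $\varepsilon,\delta>0$ small enough that the coefficient of $x^{2}$ is negative, and the right-hand side becomes integrable in $x$, locally uniformly in $z$. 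Hence $\int_{\mathbb{R}}e^{-(z-x)^{2}/(2\tau)}F(x)\,dx$ converges absolutely, Fubini's theorem allows term-by-term integration of the Taylor series of $F$, and step (ii) rewrites the outcome as $\sqrt{2\pi\tau}\sum_{n}a_{n}\,e^{+\tau D^{2}/2}z^{n}$. By Theorem~\ref{theo:entire_function_heat_flow_exists}, applied with $-\tau$ in place of $\tau$, this series converges to $\sqrt{2\pi\tau}\,e^{+\tau D^{2}/2}F$, and \eqref{eq:convolveR} follows.

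Finally I would reduce \eqref{eq:convolveR2} to the case just proved. Writing $(R_{c}H)(z):=H(cz)$ and using $D^{2}R_{c}=c^{2}R_{c}D^{2}$, one obtains the operator identity $e^{sD^{2}/2}R_{c}=R_{c}\,e^{c^{2}sD^{2}/2}$, which is legitimate by Theorem~\ref{theo:entire_function_heat_flow_exists} whenever the heat operators involved are defined, since $R_{c}$ preserves the order and multiplies the type by $|c|^{2}$. Applying this with $c=e^{i\theta/2}$ (so $|c|=1$ and $c^{2}=e^{i\theta}$) and $s=|\tau|<1/(2\sigma)$ gives $e^{|\tau|D^{2}/2}\bigl(R_{e^{i\theta/2}}F\bigr)=R_{e^{i\theta/2}}\bigl(e^{\tau D^{2}/2}F\bigr)$; evaluating at $e^{-i\theta/2}z$ and inserting \eqref{eq:convolveR} for the entire function $G(x):=F(e^{i\theta/2}x)$---which has the same order and type as $F$---produces the general formula. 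The main obstacle throughout is step (iii): interchanging summation (or a limit of partial sums) with integration uniformly in $z$ while the coefficients only satisfy $|a_{n}|\le n^{-n/2}(2e\sigma+o(1))^{n/2}$; this works precisely because of the strict inequality $|\tau|<1/(2\sigma)$, which is exactly what makes the Gaussian weight dominate the growth of the majorant $\Phi$.
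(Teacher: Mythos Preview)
Your proof is correct and mirrors the paper's argument: verify \eqref{eq:convolveR} on monomials, justify the interchange of sum and integral using the coefficient bounds coming from \eqref{eq:lindeloef_pringsheim} (your majorant $\Phi$ is a tidy repackaging of the paper's term-by-term Gamma-function estimate), and then deduce the complex-$\tau$ formula via the dilation identity, which is precisely Point~3 of Lemma~\ref{lem:heat_flow_properties1}.
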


One could also attempt to analytically continue the formula
(\ref{eq:convolveR}) in $\tau$ 
using the principal branch of the square root. This approach is valid under suitable
assumptions to ensure convergence of the integral. We will not, however,
obtain convergence for all $\tau$ with $\left\vert \tau\right\vert
<1/(2\sigma),$ even in situation (S1), for example if $\operatorname{Re}\tau$
is negative and $F$ grows along the real axis.

\begin{proof}
We start by verifying (\ref{eq:convolveR}). Observe that for all $\tau>0,$ we
have
\begin{align*}
\frac{1}{\sqrt{2\pi\tau}}\int_{-\infty}^{\infty}(z-x)^{n}\mathrm{e}%
^{-x^{2}/(2\tau)}d\,x  &  =\frac{1}{\sqrt{2\pi\tau}}\sum_{\ell=0}^{\lfloor
n/2\rfloor}\binom{n}{2\ell}z^{n-2\ell}\int_{-\infty}^{+\infty}x^{2\ell
}\mathrm{e}^{-x^{2}/(2\tau)}\,dx\\
&  =\sum_{\ell=0}^{\lfloor n/2\rfloor}\binom{n}{2\ell}z^{n-2\ell}\tau^{\ell
}\frac{(2\ell)!}{2^{\ell}\ell!}=(-\tau)^{n/2}\operatorname{He}_{n}\left(
\frac{z}{\sqrt{-\tau}}\right)  .
\end{align*}
Writing $F(z)=\sum_{n=0}^{\infty}a_{n}z^{n}$ and assuming that the integral
and the sum can be interchanged, we have
\begin{align*}
\frac{1}{\sqrt{2\pi\tau}}\int_{-\infty}^{\infty}F(z-x)\mathrm{e}%
^{-x^{2}/(2\tau)}\,dx  &  =\sum_{n=0}^{\infty}\frac{a_{n}}{\sqrt{2\pi\tau}%
}\int_{\mathbb{R}}(z-x)^{n}\mathrm{e}^{-x^{2}/(2\tau)}\,dx\\
&  =\sum_{n=0}^{\infty}a_{n}(-\tau)^{n/2}\operatorname{He}_{n}\left(  \frac
{z}{\sqrt{-\tau}}\right)  =\mathrm{e}^{+\frac{\tau}{2}D^{2}}F(z),
\end{align*}
as claimed.

To justify the interchanging of the integral and the sum, it suffices to check
that
\[
S:=\sum_{n=1}^{\infty}\int_{-\infty}^{\infty}|a_{n}||z-x|^{n}\mathrm{e}%
^{-\frac{x^{2}}{2}\operatorname{Re}(1/\tau)}\,dx<\infty.
\]
Let $C>0$ be a sufficiently large constant whose value may change from line to
line. Then, $|z-x|^{n}\leq C+C|x|^{n}$ for all $x\in\mathbb{R}$. First,
consider the case when $\rho=2$ and $\operatorname{Re}(1/\tau)>2\sigma$ if
$\sigma>0$ (the case $\rho<2$ being simpler). For every $\varepsilon>0$ we
have $|a_{n}|\leq Cn^{-n/2}(2\sigma\mathrm{e}+\varepsilon)^{n/2}$ for all
$n\in\mathbb{N}$ by~\eqref{eq:lindeloef_pringsheim}, and it follows that
\begin{align*}
S&\leq\sum_{n=1}^{\infty}\left(  C|a_{n}|+C|a_{n}|\int_{-\infty}^{\infty
}|x|^{n}\mathrm{e}^{-\frac{x^{2}}{2}\mathbb{R}e(1/\tau)}\,dx\right)  \\
&\leq
C+C\sum_{n=1}^{\infty}\frac{(2\sigma\mathrm{e}+\varepsilon)^{n/2}}{n^{n/2}%
}\frac{2^{n/2}\Gamma\left(  \frac{n+1}{2}\right)  }{(\operatorname{Re}%
(1/\tau))^{n/2}},
\end{align*}
which is finite (for sufficiently small $\varepsilon>0$) by an application of
the Stirling formula.

The general result in (\ref{eq:convolveR2}) follows easily using Point 3 of
Lemma \ref{lem:heat_flow_properties1}, once we establish Lemma \ref{lem:heat_flow_properties1} later in the paper.
\end{proof}

Our last formula for the heat operator is based on the Segal--Bargmann space,
defined as follows.

\begin{definition}
\label{def:SBspace}The Segal--Bargmann space over $\mathbb{C}$ is the space of
entire holomorphic functions $F$ satisfying%
\[
\int_{\mathbb{C}}\left\vert F(z)\right\vert ^{2}\frac{e^{-\left\vert
z\right\vert ^{2}}}{\pi}~dz<\infty,
\]
where $dz$ is the two-dimensional Lebesgue measure on $\mathbb{C}%
\cong\mathbb{R}^{2}.$ This space is a closed subspace of the Hilbert space
$L^{2}(\mathbb{C},e^{-\left\vert z\right\vert ^{2}}/\pi~dz).$
\end{definition}

Elements of the Segal--Bargmann space are of order $\rho\leq2$ with type at
most $1/2$ in the case $\rho=2.$ Consideration of the Segal--Bargmann space is
natural because the law of the GAF\ is the Gaussian measure based on the
Segal--Bargmann space. Concretely, this statement means that the functions
$z^{n}/\sqrt{n!}$ appearing in the definition (\ref{eq:GAFdef}) of the GAF
form an orthonormal basis for the Segal--Bargmann space. We emphasize,
however, that the GAF does not actually belong to the Segal--Bargmann space,
because in (\ref{eq:GAFdef}), $\sum_{n}\left\vert \xi_{n}\right\vert
^{2}=\infty$ with probability 1.

We may apply the heat operator to elements of the Segal--Bargmann space
provided that $\left\vert \tau\right\vert <1.$ Indeed, the map%
\[
F\mapsto(e^{-\tau D^{2}/2}F)(z)
\]
is a continuous linear functional on the Segal--Bargmann space for each fixed
$z\in\mathbb{C}$ and $\tau$ with $\left\vert \tau\right\vert <1.$ This linear
functional may therefore be written as an inner product with a unique element
of the Segal--Bargmann space. This line of reasoning gives another integral
representation of the heat operator---and this representation is actually
valid for functions satisfying (S1) or satisfying (S2) with $\sigma\leq1/2.$

\begin{theorem}
\label{theo:heatIntOp}Suppose $F$ satisfies (S1) or (S2), where in the case
(S2) we assume that the type $\sigma$ is at most $1/2.$ Then for all $\tau$
with $\left\vert \tau\right\vert <1,$ we have%
\begin{equation}
\left(  e^{-\tau D^{2}/2}F\right)  (z)=\int_{\mathbb{C}}\exp\left\{
-\frac{\tau}{2}\bar{w}^{2}+z\bar{w}\right\}  F(w)\frac{e^{-\left\vert
w\right\vert ^{2}}}{\pi}~dw. \label{SBheat}%
\end{equation}

\end{theorem}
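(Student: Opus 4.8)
The plan is to verify the integral formula \eqref{SBheat} by reducing it, via linearity and a density/convergence argument, to the case of monomials $F(w) = w^n$, and then to carry out the Gaussian integral over $\mathbb{C}$ explicitly. First I would check that both sides of \eqref{SBheat} define continuous linear functionals of $F$ (for fixed $z$ and $\tau$) in a topology for which monomials span a dense set: on the left, Theorem~\ref{theo:entire_function_heat_flow_exists}(ii) gives local uniform convergence of the Hermite series $\sum_n a_n (\sqrt\tau)^n \operatorname{He}_n(z/\sqrt\tau)$ for $|\tau|<1/(2\sigma)$, and since $\sigma \le 1/2$ this covers all $|\tau|<1$; on the right, one bounds the kernel $\exp\{-\tfrac\tau2\bar w^2 + z\bar w\}$ and uses the known Gaussian tail of $|F(w)|$ coming from (S1)/(S2) with $\sigma\le 1/2$ to see that the integral converges absolutely and depends continuously on the Taylor coefficients. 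Both sides being continuous and agreeing on monomials, they agree on all admissible $F$.

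The core computation is then the monomial identity
\[
\int_{\mathbb{C}} \exp\left\{ -\frac{\tau}{2}\bar w^2 + z\bar w\right\} w^n \, \frac{e^{-|w|^2}}{\pi}\, dw = \tau^{n/2}\operatorname{He}_n\!\left(\frac{z}{\sqrt\tau}\right) = e^{-\tau D^2/2}(z^n),
\]
the last equality being \eqref{eq:eq:hermite_exp_on_z^n_with_sigma}. To evaluate the left side I would expand $\exp\{z\bar w\} = \sum_{m\ge 0} z^m \bar w^m/m!$ and $\exp\{-\tfrac\tau2\bar w^2\} = \sum_{\ell\ge 0} (-\tau/2)^\ell \bar w^{2\ell}/\ell!$, then use the orthogonality relations $\int_{\mathbb C} w^n \bar w^k e^{-|w|^2}\,dw/\pi = \delta_{nk}\, n!$ for the Gaussian measure (i.e.\ the fact that $w^n/\sqrt{n!}$ is an orthonormal basis of the Segal--Bargmann space). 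Only the terms with $m + 2\ell = n$ survive, giving $\sum_{\ell=0}^{\lfloor n/2\rfloor} \frac{(-\tau/2)^\ell}{\ell!}\cdot \frac{z^{n-2\ell}}{(n-2\ell)!}\cdot n!$, which is exactly the explicit Hermite polynomial expression in \eqref{eq:hermite_poly_def} with argument rescaled, i.e.\ $\tau^{n/2}\operatorname{He}_n(z/\sqrt\tau)$. Interchanging the (double) sum with the integral here is legitimate because, after taking absolute values, one gets $\int_{\mathbb C} e^{|\tau||w|^2/2 + |z||w|}|w|^n e^{-|w|^2}\,dw/\pi < \infty$ for $|\tau|<1$.

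The main obstacle is the functional-analytic bookkeeping rather than the algebra: one must be careful that the "density of monomials" argument is applied in a topology strong enough that \emph{both} functionals are continuous, since the GAF-relevant functions ($\sigma$ up to $1/2$) sit right at the boundary where the Segal--Bargmann norm fails to control things. A clean way around this is to avoid abstract density altogether and instead argue directly: write $F(z) = \sum a_n z^n$, substitute into the right-hand side of \eqref{SBheat}, and justify term-by-term integration by the absolute bound above together with the coefficient estimate $|a_n|\le n^{-n/2}(2e\sigma + o(1))^{n/2}$ from \eqref{eq:lindeloef_pringsheim}; this shows the right-hand side equals $\sum_n a_n \tau^{n/2}\operatorname{He}_n(z/\sqrt\tau)$, which by \eqref{eq:heat_flow_def_2} of Theorem~\ref{theo:entire_function_heat_flow_exists} is precisely $(e^{-\tau D^2/2}F)(z)$. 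I would take this second, more computational route, as it sidesteps the delicate question of which Hilbert-space completion to work in and makes the constraint $|\tau|<1$ transparent as the radius of convergence of the relevant geometric-type series.
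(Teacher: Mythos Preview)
Your proof is correct and takes a genuinely different route from the paper's. The paper starts from the real-line convolution formula (Theorem~\ref{prop:heat_flow_as_convolution}) for $\tau$ real and negative, substitutes the reproducing-kernel identity $F(x)=\int_{\mathbb{C}}e^{x\bar w}F(w)\,e^{-|w|^2}/\pi\,dw$, swaps the order of integration by Fubini, recognizes the inner integral as $e^{-\tau D^2/2}$ applied to $e^{z\bar w}$, and then analytically continues in $\tau$ from the negative reals to the whole disk $|\tau|<1$. Your argument instead expands $F$ as a Taylor series, computes the Gaussian integral term by term using the Segal--Bargmann orthogonality $\int_{\mathbb C}w^n\bar w^k e^{-|w|^2}\,dw/\pi=\delta_{nk}\,n!$, and matches the resulting finite sums with the explicit Hermite formula~\eqref{eq:hermite_poly_def}. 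Your approach is arguably more elementary in that it avoids the detour through the real-line formula and the analytic-continuation step, and it makes the role of the Segal--Bargmann orthonormal basis explicit; the paper's approach, on the other hand, is shorter once Theorem~\ref{prop:heat_flow_as_convolution} and the reproducing-kernel identity are in hand, and it packages all the hard estimates into a single Fubini check plus the observation that both sides are holomorphic in $\tau$.
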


When $\tau=0,$ the formula (\ref{SBheat}) reduces to the reproducing kernel
formula for $F(z)$:%
\begin{equation}
F(z)=\int_{\mathbb{C}}\exp\left\{  z\bar{w}\right\}  F(w)\frac{e^{-\left\vert
w\right\vert ^{2}}}{\pi}~dw, \label{reproKernel}%
\end{equation}
as in \cite[Theorem 3.4]{hall_holomorphic_methods}.

\begin{proof}
We start with the case where $\tau$ is real and negative and write%
\begin{equation}
(e^{-\tau D^{2}/2}F)(z)=\frac{1}{\sqrt{-2\pi\tau}}\int_{\mathbb{R}%
}e^{(z-x)^{2}/(2\tau)}F(x)~dx. \label{SBheatStep1}%
\end{equation}
We then use the reproducing kernel formula (\ref{reproKernel}):%
\[
F(x)=\int_{\mathbb{C}}e^{x\bar{w}}F(w)\frac{e^{-\left\vert w\right\vert ^{2}}%
}{\pi}~dw.
\]
Inserting this expression into (\ref{SBheatStep1}) and reversing the order of
integration gives%
\begin{equation}
(e^{-\tau D^{2}/2}F)(z)=\int_{\mathbb{C}}\left(  \frac{1}{\sqrt{2\pi(-\tau)}%
}\int_{\mathbb{R}}e^{(z-x)^{2}/(2\tau)}e^{x\bar{w}}~dx\right)  F(w)\frac
{e^{-\left\vert w\right\vert ^{2}}}{\pi}~dw. \label{SBheatStep2}%
\end{equation}
(We leave it as an exercise to verify that Fubini's theorem applies.) The
inner integral in (\ref{SBheatStep2}) is the heat operator $e^{-\tau D^{2}/2}$
applied to $e^{z\bar{w}},$ which gives $e^{-\frac{\tau}{2}\bar{w}^{2}}%
e^{z\bar{w}},$ since $De^{z\bar{w}}=\bar{w}e^{z\bar{w}}.$

We therefore obtain the desired result when $\tau$ is real and negative. We
then observe that both sides of the desired equality (\ref{SBheat}) are
well-defined holomorphic functions of $\tau$ for $\left\vert \tau\right\vert
<1.$ Since they agree when $\tau$ is real and negative, they agree for all
$\tau.$
\end{proof}

\subsection{Properties of the heat flow}

In a first lemma we collect some basic properties of the heat flow operator.

\begin{lemma}
\label{lem:heat_flow_properties1} Let $F$ be an entire function satisfying
(S1) or (S2) and $\tau\in\mathbb{C}$ such that in the case (S2), we have
$\left\vert \tau\right\vert <1/(2\sigma)$. Then, the following hold:

\begin{enumerate}
\item $e^{-\tau D^{2}/2}$ commutes with $D$, that is $\mathrm{e}^{-\tau
D^{2}/2}DF=De^{-\tau D^{2}/2}F$.

\item $e^{-\tau D^{2}/2}$ commutes with translations. More precisely, for
$a\in\mathbb{C}$ we have
\[
e^{-\tau D^{2}/2}[F(a+\cdot)](z)=e^{-\tau D^{2}/2}[F(\cdot)](a+z).
\]

\item For every $\lambda\in\mathbb{C}$ such that in the case (S2) it holds
$\left\vert \lambda^{2}t\right\vert <1/(2\sigma)$ we have
\[
\mathrm{e}^{-\tau D^{2}/2}[F(\lambda\cdot)](z)=\mathrm{e}^{-\tau\lambda
^{2}D^{2}/2}[F(\cdot)](\lambda z).
\]

\item The function $F(z;t):=(e^{-\tau D^{2}/2}F)(z)$ solves the (backward)
heat equation
\[
\frac{\partial}{\partial t}F(z;t)=-\frac{1}{2}\cdot\left(  \frac{d}%
{dz}\right)  ^{2}F(z;t).
\]

\end{enumerate}
\end{lemma}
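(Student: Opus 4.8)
\textbf{Proof plan for Lemma \ref{lem:heat_flow_properties1}.}
The plan is to verify each of the four properties by reducing it to a manipulation of one of the convergent series representations from Theorem \ref{theo:entire_function_heat_flow_exists}, using the absolute and locally uniform convergence established there to justify term-by-term operations. Throughout, I would keep in mind that the main point is not depth but legitimacy of interchanging limits, and that all four claims are essentially formal once convergence is under control. For Point 1, I would use the power-series representation \eqref{eq:heat_flow_def_1}: since $D$ commutes with each partial sum $\sum_{k=0}^{N}\frac{1}{k!}(-\frac{\tau}{2}D^2)^k F$ trivially, and since $DF$ is again an entire function of the same order and type as $F$ (differentiation does not increase order or type), both $De^{-\tau D^2/2}F$ and $e^{-\tau D^2/2}DF$ are limits of the same sequence of partial sums; locally uniform convergence of the series together with locally uniform convergence of the termwise derivatives (Weierstrass) gives the identity. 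One must check $DF$ still satisfies the hypothesis with the same $\sigma$, which is immediate from \eqref{eq:lindeloef_pringsheim}.

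For Point 2, I would again invoke Theorem \ref{theo:entire_function_heat_flow_exists}: the function $z\mapsto F(a+z)$ is entire of the same order, and in case (S2) of the same type $\sigma$, as $F$ (a shift changes $M(r)$ only by a bounded factor in the exponent's lower-order terms). Writing $F(a+z)=\sum_n b_n z^n$ with $b_n$ the Taylor coefficients of $F$ at $a$, one has $D^{2k}[F(a+\cdot)](z) = (D^{2k}F)(a+z)$, so each partial sum of \eqref{eq:heat_flow_def_1} for $F(a+\cdot)$ evaluated at $z$ equals the corresponding partial sum for $F$ evaluated at $a+z$; passing to the limit gives the claim. Point 3 is analogous with the substitution $z \mapsto \lambda z$: using $D^{2k}[F(\lambda\cdot)](z) = \lambda^{2k}(D^{2k}F)(\lambda z)$, the $k$-th term of the series for $e^{-\tau D^2/2}[F(\lambda\cdot)](z)$ is $\frac{1}{k!}(-\frac{\tau\lambda^2}{2})^k (D^{2k}F)(\lambda z)$, which is exactly the $k$-th term of $e^{-\tau\lambda^2 D^2/2}[F(\cdot)](\lambda z)$; the hypothesis $|\lambda^2\tau|<1/(2\sigma)$ is precisely what makes the right-hand series converge, and $F(\lambda\cdot)$ has order $2$ and type $|\lambda|^2\sigma$, so the left-hand series converges under the same condition.

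For Point 4, I would differentiate the series \eqref{eq:heat_flow_def_1} in $\tau$ term by term: the $\tau$-derivative of $\frac{1}{k!}(-\frac{\tau}{2})^k D^{2k}F(z)$ is $\frac{1}{(k-1)!}(-\frac{1}{2})(-\frac{\tau}{2})^{k-1} D^{2k}F(z) = -\frac12 D^2\left[\frac{1}{(k-1)!}(-\frac{\tau}{2})^{k-1}D^{2(k-1)}F(z)\right]$, and summing over $k\geq 1$ reproduces $-\frac12 D^2(e^{-\tau D^2/2}F)(z)$. Termwise $\tau$-differentiation is justified because Theorem \ref{theo:entire_function_heat_flow_exists} asserts the series defines a jointly entire (resp. jointly holomorphic on $\mathbb{C}\times\{|\tau|<1/(2\sigma)\}$) function of $(z,\tau)$, so all partial derivatives may be computed termwise by standard several-complex-variables theory; alternatively one can cite \eqref{eq:heat_flow_operator_formal_series} and the remark following it in the introduction.

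\textbf{Main obstacle.} The only genuine issue in all four parts is bookkeeping on order and type: one must confirm that each of the transformed functions ($DF$, $F(a+\cdot)$, $F(\lambda\cdot)$) still falls under (S1) or (S2) with the type bound needed to apply Theorem \ref{theo:entire_function_heat_flow_exists}, and in Point 3 that the modified radius condition $|\lambda^2\tau|<1/(2\sigma)$ is exactly the natural one. These are routine consequences of \eqref{eq:lindeloef_pringsheim} (for differentiation and dilation) and of the crude estimate $M_{F(a+\cdot)}(r)\leq M_F(r+|a|)$ (for translation), so I expect no real difficulty — the proof is essentially a sequence of justified termwise manipulations.
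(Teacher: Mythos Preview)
Your proposal is correct and follows exactly the approach the paper indicates: the paper's proof consists of the single sentence ``These basic properties follow immediately from the absolute convergence of the series \eqref{eq:heat_flow_def_1} in the respective domain. We leave the details as an exercise to the reader.'' You have simply supplied those details, including the order/type bookkeeping that makes the termwise manipulations legitimate.
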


Here, we write $\exp\{-\tau D^{2}/2\}[F(\cdot)](\varphi(z))$ if we want to
apply the heat-flow operator to $F(z)$ and then to replace $z$ by $\varphi(z)$
in the resulting function. If we want to apply the heat-flow operator to
$F(\varphi(z))$, we write $\exp\{-\tau D^{2}/2\}[F(\varphi(\cdot))](z)$.


\begin{proof}
These basic properties follow immediately from the absolute convergence of the
series \eqref{eq:heat_flow_def_1} in the respective domain. We leave the
details as an exercise to the reader.
\end{proof}

Our next insight reveals that the situations (S1) and (S2) stay invariant
under the heat flow.

\begin{proposition}
\label{prop:heat_flow_properties2} Let $F$ be an entire function satisfying
(S1) or (S2).

\begin{enumerate}
\item If $F$ is of order $\rho<2$ then $e^{-\tau D^{2}/2}F$ is of order
$\rho<2$ for all $\tau\in\mathbb{C}$.

\item If $F$ is of order $\rho=2$ and finite type $\sigma$, then for
$\left\vert \tau\right\vert <1/(2\sigma)$ the entire function $e^{-\tau
D^{2}/2}F$ is of order $\rho=2$ and of type at most $\sigma(\tau
)=\sigma/(1-2\sigma\left\vert \tau\right\vert )$.

\item Let $\tau_{1},\tau_{2}\in\mathbb{C}$ be such that for (S2) we assume
$\left\vert \tau_{1}\right\vert +\left\vert \tau_{2}\right\vert <1/(2\sigma)$.
Then, $e^{-\tau_{1}D^{2}/2}e^{-\tau_{2}D^{2}/2}F=e^{-(\tau_{1}+\tau_{2}%
)D^{2}/2}F$.
\end{enumerate}
\end{proposition}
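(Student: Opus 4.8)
The plan is to handle the three parts in order, in each case reducing to the coefficient estimates furnished by the Lindelöf--Pringsheim formulas \eqref{eq:lindeloef_pringsheim} together with the series representation \eqref{eq:heat_flow_def_2} and the Hermite asymptotics $\operatorname{He}_n(x) = (n/\mathrm{e})^{n/2}\mathrm{e}^{o(n)}$ established in the proof of Theorem \ref{theo:entire_function_heat_flow_exists}. For part (1), write $e^{-\tau D^2/2}F(z) = \sum_n a_n \tau^{n/2}\operatorname{He}_n(z/\sqrt\tau)$ and read off the Taylor coefficients $b_n$ of $e^{-\tau D^2/2}F$ in powers of $z$ from \eqref{eq:heat_flow_def_3}; since $\rho<2$ means $|a_n|\le n^{-(1+\varepsilon)n/2}$ eventually, a direct estimate of $b_n = \sum_{m} a_{n+2m}\frac{(-1)^m (n+2m)!}{2^m m! \, n!}\tau^m$ shows $|b_n| \le n^{-(1+\varepsilon')n/2}$ for some $\varepsilon'>0$, and then the first formula in \eqref{eq:lindeloef_pringsheim} gives $\rho \le 2$, while the finiteness of $(\sigma\mathrm{e}\rho)^{1/\rho}$ via the second formula forces order strictly less than $2$ (or one argues directly that $e^{-\tau D^2/2}F$ is entire of order $<2$ by comparison with the convolution/Segal--Bargmann representation applied to the majorant).

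For part (2), this is the quantitative heart of the matter. I would again use \eqref{eq:heat_flow_def_2}: with $|a_n| \le n^{-n/2}(2\mathrm{e}\sigma + o(1))^{n/2}$ from \eqref{eq:lindeloef_pringsheim}, the Hermite asymptotics and the triangle inequality on \eqref{eq:heat_flow_def_3} give, for the coefficient $b_n$ of $z^n$,
\[
|b_n| \;\le\; \sum_{m=0}^{\infty} |a_{n+2m}| \frac{(n+2m)!}{2^m m!\, n!}\, |\tau|^m .
\]
Plugging the bound on $|a_{n+2m}|$ and estimating the factorials by Stirling, one is led to a sum dominated by a geometric-type series in $2\sigma|\tau|$; carefully tracking the exponents should yield $|b_n| \le n^{-n/2}\bigl(2\mathrm{e}\,\sigma(\tau) + o(1)\bigr)^{n/2}$ with $\sigma(\tau) = \sigma/(1-2\sigma|\tau|)$, and then the second formula in \eqref{eq:lindeloef_pringsheim} (applied in reverse, as an upper bound on the type) delivers the claimed bound on the type, while $\rho = 2$ follows since the type is finite and nonzero whenever $\sigma>0$ (the case $\sigma = 0$ being covered by part (1) after noting $\sigma(\tau)=0$). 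I expect the bookkeeping in this geometric-sum estimate — getting the constant $2\sigma|\tau|$ exactly rather than some weaker bound — to be the main obstacle, and the cleanest route may instead be to invoke the Segal--Bargmann integral representation \eqref{SBheat}: after rescaling so that $\sigma \le 1/2$, estimate $|(e^{-\tau D^2/2}F)(z)|$ directly from \eqref{SBheat} by Cauchy--Schwarz or by completing the square in the Gaussian integral, which produces an explicit Gaussian bound in $|z|$ with the type $\sigma(\tau)$ visible in the exponent.

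For part (3), the semigroup property, the argument is purely formal once convergence is secured: by Theorem \ref{theo:entire_function_heat_flow_exists} each of $e^{-\tau_2 D^2/2}F$ and then $e^{-\tau_1 D^2/2}(e^{-\tau_2 D^2/2}F)$ is well defined under the stated hypothesis (for (S2), part (2) guarantees $e^{-\tau_2 D^2/2}F$ has type at most $\sigma/(1-2\sigma|\tau_2|)$, and one checks that $|\tau_1|$ times twice this type is $<1$ precisely when $|\tau_1|+|\tau_2| < 1/(2\sigma)$, so the outer heat flow is legitimate). Then both sides equal $\sum_{k} \frac{1}{k!}(-(\tau_1+\tau_2)D^2/2)^k F$ by the binomial theorem applied term by term, the rearrangement being justified by the absolute and locally uniform convergence of the relevant double series exactly as in the proof of Theorem \ref{theo:entire_function_heat_flow_exists}. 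Alternatively one notes both sides are holomorphic in $(\tau_1,\tau_2)$ on the stated domain and agree on a real neighborhood of the origin (where the convolution formula \eqref{eq:convolveR} and the classical semigroup property of Gaussian convolution apply), hence agree everywhere by analytic continuation.
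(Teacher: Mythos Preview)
Your approach via Taylor-coefficient estimates and the Lindelöf--Pringsheim formulas is viable but differs from the paper's route, which bounds the maximum modulus $M_\tau(r)=\sup_{|z|=r}|e^{-\tau D^2/2}F(z)|$ directly from the real-line convolution formula of Theorem~\ref{prop:heat_flow_as_convolution}. For (S1) the paper inserts $|F(x-z)|\le e^{a|x-z|^\rho}$ into the Gaussian integral to get $M_\tau(r)=O(e^{2ar^\rho})$; for (S2) it inserts $|F|\le e^{(\sigma+\varepsilon)|\cdot|^2}$ and completes the square, which produces the type bound $\sigma/(1-2\sigma|\tau|)$ with nothing beyond a quadratic substitution. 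Your coefficient estimate would reach the same conclusion at the cost of considerably more Stirling bookkeeping, and your suggested alternative via the Segal--Bargmann kernel \eqref{SBheat} after rescaling also works but is less direct than the real-line convolution, which needs no restriction to type $\le 1/2$. For part~(3) your argument via absolute convergence and the binomial theorem is exactly what the paper does.

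Two small gaps to flag. In part~(1), the deduction ``$\rho\le 2$ and then finiteness of the type forces $\rho<2$'' is not correct---finite type at $\rho=2$ is perfectly compatible with $\rho=2$---but the estimate $|b_n|\le n^{-(1+\varepsilon')n/2}$ you are aiming for already gives $\rho<2$ directly from the first formula in \eqref{eq:lindeloef_pringsheim}, so this is just a misstated conclusion. In part~(2), your argument that $\rho=2$ exactly (not merely $\le 2$) because ``the type is nonzero'' does not go through: you have only established an \emph{upper} bound on the type. The paper closes this by combining part~(1) with the semigroup property: if $e^{-\tau D^2/2}F$ had order $<2$, then applying $e^{+\tau D^2/2}$ (legitimate in situation (S1) for all $\tau$) would force $F$ itself to have order $<2$, a contradiction.
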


Note that (3) will not, in general, hold under the weaker assumption that
$\left\vert \tau_{1}\right\vert ,$ $\left\vert \tau_{2}\right\vert ,$ and
$\left\vert \tau_{1}+\tau_{2}\right\vert $ are all less than $1/(2\sigma).$
Indeed, even in the case $\tau_{2}=-\tau_{1},$ it is not sufficient to assume
$\left\vert \tau_{1}\right\vert =\left\vert \tau_{2}\right\vert <1/(2\sigma).$
If, for instance, $F(z)=e^{z^{2}/2},$ then after the heat flow of with
parameter $\tau=-3/4,$ we obtain $F(z,-3/4)=2e^{2z^{2}}$, which lies outside
the domain of the candidate for the inverse heat flow $\mathrm{e}^{-\frac
{3}{4}D^{2}/2}.$ (See Example \ref{ex:heat_flow_applied_to_zn_e_az_sq_bz}.)

\begin{proof}
Let us first focus on situation (S1). If $F$ is of order $\rho<2$, then by
\eqref{eq:assumption_F} it satisfies $M(r)=\mathcal{O}( \mathrm{e}^{
\varepsilon r^{2}})$. Thus, Proposition \ref{prop:heat_flow_as_convolution}
allows to estimate the growth after the heat flow as
\begin{align*}
M_{t}(r)=\sup_{|z|=r}|\mathrm{e}^{-\frac t2 D^{2}} F(z)|\lesssim\sup_{|z|=r}
\int_{\mathbb{R}}\mathrm{e}^{a |x-z|^{\rho}-x^{2}/(2|t|)}dx=\mathcal{O}(
\mathrm{e}^{2a r^{\rho}})
\end{align*}
for $\varepsilon>0$ sufficiently small and $a>\sigma$. This proves one
direction of (a).

Now, for any $s,t\in\mathbb{C}$, since $\mathrm{e}^{-\frac s 2 D^{2}}F$ is of
order $\rho<2$ we may apply Theorem
\ref{theo:entire_function_heat_flow_exists}, where by absolute convergence we
may yet again exchange limits and differentiation to obtain
\begin{align*}
\mathrm{e}^{-\frac t 2 D^{2}}\mathrm{e}^{-\frac s 2 D^{2}}F(z)  &
=\sum_{j,k=0}^{\infty}\frac1 {j!k!} \Big(-\frac t2 D^{2}\Big)^{j} \Big(-\frac
s2 D^{2}\Big)^{k} F(z)\\
&  =\sum_{n=0}^{\infty}\frac1 {n!}\sum_{k=0}^{n} \frac{n!} {(n-k)!k!}
\Big(-\frac t2 \Big)^{n-k} \Big(-\frac s 2 \Big)^{k} D^{2n}F(z)\\
&=\mathrm{e}%
^{-\frac{t+s} 2 D^{2}}F(z).
\end{align*}
In particular taking $s=-t$ shows the inverse direction of (a), that is if
$\mathrm{e}^{-\frac t 2 D^{2}}F$ would be of order $\rho<2$, then so is
$F=\mathrm{e}^{-\frac t 2 D^{2}}\mathrm{e}^{\frac t 2 D^{2}}F$.

We turn to situation (S2) for which we already argued why the order $\rho=2$
cannot decrease after the heat flow. On the other hand, again by
(\ref{eq:assumption_F}) its growth is bounded by
\begin{align*}
M_{t}(r)=  &  \sup_{|z|=r}|\mathrm{e}^{-\frac{t}{2}D^{2}}F(z)|\lesssim
\sup_{|z|=r}\int_{\mathbb{R}}\exp\big((\sigma+\varepsilon)|x-z|^{2}%
-x^{2}/(2|t|)\big)dx\\
=  &  \sup_{|z|=r}\int_{\mathbb{R}}\exp\Big((\sigma+\varepsilon)|z|^{2}%
+\frac{\big((\sigma+\varepsilon)\operatorname{Re}z\big)^{2}}{\tfrac{1}%
{2|t|}-\sigma-\varepsilon}-\big(\frac{1}{2|t|}-\sigma-\varepsilon
\big)y^{2}\Big)dy\\
=  &  c\sup_{|z|=r}\exp\Big(\frac{\sigma+\varepsilon}{1-2|t|(\sigma
+\varepsilon)}(\operatorname{Re}z)^{2}+(\sigma+\varepsilon)(\Im z)^{2}\Big)\\
=  &  \mathcal{O}\Big(\exp\big(\frac{\sigma+\varepsilon}{1-2|t|(\sigma
+\varepsilon)}r^{2}\big)\Big),
\end{align*}
where shifted to $y=x+\frac{\sigma+\varepsilon}{1/(2|t|)-\sigma-\varepsilon
}\operatorname{Re}z$. Taking logarithm(s), we obtain $\rho=2$ and
$\sigma(t)\leq\frac{\sigma}{1-2\sigma|t|}$. (c) follows from the same
arguments as in situation (S1).
\end{proof}

The next proposition states that the evolution of the function $\mathrm{e}%
^{\frac{a}{2}z^{2}+bz}F(z)$ under the heat flow is essentially the same as the
evolution of $F(z)$, up to a linear change of variables and a simple
additional factor.

\begin{proposition}
\label{prop:heat_flow_and_mult_by_exp} Let $F$ be an entire function
satisfying~\eqref{eq:assumption_F} with some finite $\sigma_{0}\geq0$. Let
$a\in\mathbb{C}$ and $\tau\in\mathbb{C}$ be such that $|\tau|<1/(2\sigma
_{0}+|a|)$. Then, for all $b\in\mathbb{C}$
\begin{equation}
\mathrm{e}^{-\tau D^{2}/2}\left[  \mathrm{e}^{\frac{a}{2}z^{2}+bz}F(z)\right]
=\frac{1}{\sqrt{1+a\tau}}\cdot\mathrm{e}^{\frac{az^{2}+2bz-\tau b^{2}%
}{2(1+a\tau)}}\cdot\mathrm{e}^{-\frac{\tau}{2(1+a\tau)}D^{2}}\left[
F(\cdot)\right]  \left(  \frac{z-b\tau}{1+a\tau}\right)
.\label{eq:prop:heat_flow_mult_by_1}%
\end{equation}

\end{proposition}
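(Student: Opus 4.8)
The plan is to verify the identity first for the special one-parameter family of functions $F(z) = \mathrm{e}^{z\bar w}$ (the reproducing kernels of the Segal--Bargmann space), for which the left-hand side of \eqref{eq:prop:heat_flow_mult_by_1} can be computed explicitly by a Gaussian integral, and then to bootstrap to general $F$ satisfying \eqref{eq:assumption_F} with $\sigma_0 \le 1/2$ via Theorem~\ref{theo:heatIntOp}; the general case then follows by an extra rescaling argument using Point~3 of Lemma~\ref{lem:heat_flow_properties1}. Alternatively — and this is probably cleaner to write up — I would treat both sides of \eqref{eq:prop:heat_flow_mult_by_1} as functions of $\tau$ and first establish the formula for $\tau$ real and positive with the additional sign reversed, using the convolution representation \eqref{eq:convolveR}: there the left-hand side is a genuine Gaussian convolution $\frac{1}{\sqrt{2\pi\tau}}\int_{\mathbb R}\mathrm{e}^{-(z-x)^2/(2\tau)}\,\mathrm{e}^{\frac a2 x^2 + bx}F(x)\,dx$, and one completes the square in the exponent $-\frac{(z-x)^2}{2\tau} + \frac a2 x^2 + bx$ as a quadratic in $x$. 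Completing the square produces exactly a Gaussian in $x$ centered at a shifted point, whose width is governed by the coefficient $\frac 1\tau - a$ (hence the condition that forces $1+a\tau$, after sign adjustment, to stay away from $0$), times a prefactor that is the exponential of a quadratic in $z$; recognizing the remaining integral as $\frac{1}{\sqrt{\,\cdot\,}}$ times the heat operator applied to $F$ evaluated at the shifted argument gives the claimed right-hand side.

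The key steps, in order: (i) reduce to the convolution picture by taking $\tau>0$ and reversing the sign, invoking Theorem~\ref{prop:heat_flow_as_convolution}; (ii) write the left side as $\frac{1}{\sqrt{2\pi\tau}}\int_{\mathbb R}\mathrm{e}^{Q(x,z)}F(x)\,dx$ and complete the square in $x$, extracting the $z$-dependent prefactor $\mathrm{e}^{(az^2+2bz-\tau b^2)/(2(1-a\tau))}$ and the normalizing constant $(1-a\tau)^{-1/2}$ (signs to be adjusted at the end), while the residual Gaussian integral in $x$, after the substitution matching the new center, is precisely $\frac{1}{\sqrt{2\pi\tau/(1-a\tau)}}\int \mathrm{e}^{-(\,y\,-\,\cdot\,)^2/(2\tau/(1-a\tau))}F(y)\,dy$, i.e. the heat operator with parameter $\tau/(1-a\tau)$ evaluated at $(z - b\tau)/(1 - a\tau)$; (iii) re-apply Theorem~\ref{prop:heat_flow_as_convolution} in reverse to identify this integral with $\mathrm{e}^{-\frac{\tau}{2(1-a\tau)}D^2}[F(\cdot)]((z-b\tau)/(1-a\tau))$; (iv) restore the original sign convention so that $1-a\tau$ becomes $1+a\tau$, yielding \eqref{eq:prop:heat_flow_mult_by_1} for $\tau$ in a real neighborhood of $0$; (v) conclude for all $\tau$ with $|\tau| < 1/(2\sigma_0 + |a|)$ by analytic continuation, observing that both sides are holomorphic in $\tau$ on that disk — the left side because $\mathrm{e}^{\frac a2 z^2 + bz}F(z)$ satisfies \eqref{eq:assumption_F} with $\sigma_0$ replaced by $\sigma_0 + |a|/2$ so Theorem~\ref{theo:entire_function_heat_flow_exists} applies, and the right side because $|\tau| < 1/(2\sigma_0 + |a|)$ is exactly the condition that keeps $\tau/(1+a\tau)$ inside the radius $1/(2\sigma_0)$ needed for $\mathrm{e}^{-\frac{\tau}{2(1+a\tau)}D^2}F$ to be defined (and keeps $1+a\tau \ne 0$).

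The main obstacle is the bookkeeping in step (v): one must check carefully that the map $\tau \mapsto \tau/(1+a\tau)$ sends the disk $|\tau| < 1/(2\sigma_0 + |a|)$ into the disk $|\zeta| < 1/(2\sigma_0)$, so that the right-hand side is genuinely well defined and holomorphic there, and that $1 + a\tau$ does not vanish on that disk so the square root and the division make sense (a branch of $\sqrt{1+a\tau}$ must be fixed — the principal branch, equal to $1$ at $\tau = 0$). This is a Möbius-map estimate: if $|\tau|(2\sigma_0 + |a|) < 1$ then $|a\tau| < 1$ so $1+a\tau \ne 0$, and $|\tau/(1+a\tau)| \le |\tau|/(1-|a\,\tau|)$, which one checks is $< 1/(2\sigma_0)$ precisely under the stated hypothesis. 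The rest — completing the square and matching Gaussian normalizations — is routine but must be done with attention to the relation $\mathrm{He}_n$ versus the convolution kernel that already appears in the proof of Theorem~\ref{prop:heat_flow_as_convolution}, so that no stray factors of $2$, $\pi$, or $\tau$ survive. Finally one should remark that the formula for general $b$ follows from the $b=0$ case by the translation covariance in Point~2 of Lemma~\ref{lem:heat_flow_properties1} combined with the $b=0$ identity, if one prefers to reduce the amount of completing-the-square; but doing it directly with $b$ present is no harder.
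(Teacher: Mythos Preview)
Your proposal is correct and follows essentially the same route as the paper: use the real-variable convolution representation of the heat operator, complete the square in the integration variable, recognize the residual Gaussian integral as another heat operator at a modified time and argument, and then analytically continue in~$\tau$. The only organizational difference is that the paper first establishes the special case $a=1$, $b=0$ (with $\tau$ real negative and $z$ real) and afterwards reduces the general case to this one by the substitution $\tilde z=\sqrt{a}\,z+b/\sqrt{a}$ together with Point~3 of Lemma~\ref{lem:heat_flow_properties1}; this keeps the Gaussian width in the intermediate integral real and so the identification in your step~(iii) is immediate, whereas handling complex $a$ directly forces the ``new'' Gaussian to have complex variance $\tau/(1-a\tau)$, which you then need either a contour shift or an extra analyticity argument to identify with the heat semigroup --- doable, but the paper's reduction sidesteps it.
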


\begin{proof}
According to Proposition~\ref{prop:heat_flow_as_convolution}, if $F$ satisfies
condition~\eqref{eq:assumption_F}, then
\begin{equation}
\mathrm{e}^{-\tau D^{2}/2}F(z)=\frac{1}{\sqrt{-2\pi\tau}}\int_{-\infty
}^{\infty}F(z-x)\mathrm{e}^{x^{2}/(2\tau)}d\,x\quad\text{ if }-\frac
{1}{2\sigma_{0}}<\tau<0. \label{eq:heat_flow_as_convol_aux1}%
\end{equation}
The function $z\mapsto\mathrm{e}^{\frac{a}{2}z^{2}+bz}F(z)$ satisfies the
following analogue of condition~\eqref{eq:assumption_F}: For every
$\varepsilon>0$ we have
\[
\max_{|z|=r}|\mathrm{e}^{\frac{a}{2}z^{2}+bz}F(z)|\leq\mathrm{e}^{(\sigma
_{0}+\frac{1}{2}|a|+\varepsilon)r^{2}},
\]
provided $r>r(\varepsilon)$ is sufficiently large. It follows from
Theorem~\ref{theo:entire_function_heat_flow_exists} that \linebreak$\mathrm{e}^{-\tau
D^{2}/2}[\mathrm{e}^{\frac{a}{2}z^{2}+bz}F(z)]$ defines an analytic function
of the variables $(z,\tau)$ in the range $z\in\mathbb{C}$, $|\tau
|<1/(2\sigma_{0}+|a|)$.

Let us first prove~\eqref{eq:prop:heat_flow_mult_by_1} assuming that $a=1,b=0$
and $z\in\mathbb{R}$, $\tau\in\mathbb{R}$ satisfies $-1/(2\sigma_{0}%
+1)<\tau<0$. Applying~\eqref{eq:heat_flow_as_convol_aux1} to the function
$z\mapsto\mathrm{e}^{\frac{1}{2}z^{2}}F(z)$ yields
\begin{align*}
\mathrm{e}^{-\tau D^{2}/2}\left[  \mathrm{e}^{\frac{1}{2}z^{2}}F(z)\right]
&  =\frac{1}{\sqrt{-2\pi\tau}}\int_{-\infty}^{\infty}F(z-x)\mathrm{e}%
^{\frac{1}{2}(z-x)^{2}}\mathrm{e}^{x^{2}/(2\tau)}\,dx\\
&  =\frac{1}{\sqrt{-2\pi\tau}}\mathrm{e}^{\frac{1}{2}z^{2}}\int_{-\infty
}^{\infty}F(z-x)\mathrm{e}^{\frac{x^{2}}{2}\cdot\frac{1+\tau}{\tau}-xz}\,dx\\
&  =\frac{1}{\sqrt{-2\pi\tau}}\mathrm{e}^{\frac{1}{2}z^{2}-\frac{\tau z^{2}%
}{2(1+\tau)}}\int_{-\infty}^{\infty}F(z-x)\mathrm{e}^{\frac{1}{2\tau_{\ast}%
}\cdot\left(  x-\frac{\tau z}{1+\tau}\right)  ^{2}}\,dx,
\end{align*}
where we defined $\tau_{\ast}:=\tau/(1+\tau)$. One easily checks that
$-1/(2\sigma_{0})<\tau_{\ast}<0$. After the substitution $y:=x-\frac{\tau
z}{1+\tau}$ (observe that $y$ takes real values because $z$ is real), we get
\begin{align*}
\mathrm{e}^{-\tau D^{2}/2}\left[  \mathrm{e}^{\frac{1}{2}z^{2}}F(z)\right]
&  =\frac{1}{\sqrt{-2\pi\tau}}\mathrm{e}^{\frac{z^{2}}{2(1+\tau)}}%
\int_{-\infty}^{\infty}F\left(  z-y-\frac{\tau z}{1+\tau}\right)
\mathrm{e}^{y^{2}/(2\tau_{\ast})}\,dy\\
&  =\frac{1}{\sqrt{1+\tau}}\cdot\mathrm{e}^{\frac{z^{2}}{2(1+\tau)}}\cdot
\frac{1}{\sqrt{-2\pi\tau_{\ast}}}\int_{-\infty}^{\infty}F\left(  \frac
{z}{1+\tau}-y\right)  \mathrm{e}^{y^{2}/(2\tau_{\ast})}\,dy\\
&  =\frac{1}{\sqrt{1+\tau}}\cdot\mathrm{e}^{\frac{z^{2}}{2(1+\tau)}}%
\cdot\mathrm{e}^{-\frac{\tau_{\ast}}{2}D^{2}}\left[  F(\cdot)\right]  \left(
\frac{z}{1+\tau}\right)  ,
\end{align*}
where in the last step we used~\eqref{eq:heat_flow_as_convol_aux1}. This proves~\eqref{eq:prop:heat_flow_mult_by_1}.

Recall that $(z,\tau)\mapsto\mathrm{e}^{-\tau D^{2}/2}[\mathrm{e}^{\frac{1}%
{2}z^{2}}F(z)]$ is an analytic function of two variables on $\{(z,\tau
)\in\mathbb{C}^{2}:z\in\mathbb{C},|\tau|<1/(2\sigma_{0})\}$. The right-hand
side of~\eqref{eq:prop:heat_flow_mult_by_1} also defines an analytic function
on the same domain, and both functions coincide if $z\in\mathbb{R}$ and
$\tau<0$. By the uniqueness of analytic continuation both functions coincide
on the whole domain of definition, which
proves~\eqref{eq:prop:heat_flow_mult_by_1} for $a=1,b=0$.

To prove~\eqref{eq:prop:heat_flow_mult_by_1} for arbitrary $a,b\in\mathbb{C}$,
$a\neq0$, we use the behavior under linear changes of variables to $\tilde
{z}=\sqrt{a}z+b/\sqrt{a}$ such that $\frac{a}{2}z^{2}+bz=\frac{1}{2}\tilde
{z}^{2}-\frac{b^{2}}{2a}$. For $\tilde{F}(\tilde{z})=F(\tilde{z}/\sqrt
{a}-b/a)=F(z)$ we apply the previous proof (for $a\tau$ replacing $\tau$) and
Lemma \ref{lem:heat_flow_properties1} to obtain
\begin{align*}
\mathrm{e}^{-\tau D^{2}/2}\left[  \mathrm{e}^{\frac{a}{2}z^{2}+bz}F(z)\right]
&  =\mathrm{e}^{-\frac{b^{2}}{2a}}\mathrm{e}^{-\tau D^{2}/2}\left[
\mathrm{e}^{\frac{1}{2}\tilde{z}^{2}}\tilde{F}(\tilde{z})\right]
=\mathrm{e}^{-\frac{b^{2}}{2a}}\mathrm{e}^{-\frac{a\tau}{2}D^{2}}\left[
\mathrm{e}^{\frac{1}{2}z^{2}}\tilde{F}(z)\right]  (\tilde{z})\\
&  =\mathrm{e}^{-\frac{b^{2}}{2a}}\frac{1}{\sqrt{1+a\tau}}\cdot\mathrm{e}%
^{\frac{\tilde{z}^{2}}{2(1+a\tau)}}\cdot\mathrm{e}^{-\frac{a\tau}{2(1+a\tau
)}D^{2}}\left[  \tilde{F}\right]  \left(  \frac{\tilde{z}}{1+a\tau}\right) \\
&  =\frac{1}{\sqrt{1+a\tau}}\cdot\mathrm{e}^{\frac{az^{2}+2bz-\tau b^{2}%
}{2(1+a\tau)}}\mathrm{e}^{-\frac{\tau}{2(1+a\tau)}D^{2}}\left[  F\right]
\left(  \frac{z-b\tau}{1+a\tau}\right).
\end{align*}
Therefore, \eqref{eq:prop:heat_flow_mult_by_1} holds.
\end{proof}


\begin{example}
\label{ex:heat_flow_applied_to_zn_e_az_sq_bz} Taking $F(z)=1$ in
Proposition~\ref{prop:heat_flow_and_mult_by_exp} we obtain the following
result. Let $a,b\in\mathbb{C}$ and $\tau\in\mathbb{C}$ be such that
$|a\tau|<1$. Then,
\begin{equation}
\mathrm{e}^{-\tau D^{2}/2}\left[  \mathrm{e}^{\frac{a}{2}z^{2}+bz}\right]
=\frac{1}{\sqrt{1+a\tau}}\cdot\exp\left\{  \frac{az^{2}+2bz-\tau b^{2}%
}{2(1+a\tau)}\right\}  . \label{eq:heat_flow_exponential_new}%
\end{equation}
More generally, under the same assumptions and for every $n\in\mathbb{N}_{0}$,
taking $F(z)=z^{n}$ and recalling~\eqref{eq:eq:hermite_exp_on_z^n_with_sigma}
yields
\begin{align*}
&\mathrm{e}^{-\tau D^{2}/2}[z^{n}\mathrm{e}^{\frac{a}{2}z^{2}+bz}]\\&=\frac
{1}{\sqrt{1+a\tau}}\cdot\exp\left\{  \frac{az^{2}+2bz-\tau b^{2}}{2(1+a\tau
)}\right\}  \cdot\left(  \frac{\tau}{1+a\tau}\right)  ^{n/2}\operatorname{He}%
_{n}\left(  \frac{z-b\tau}{\sqrt{\tau(1+a\tau)}}\right)  .
\label{eq:heat_flow_exponential_new_with_zn}%
\end{align*}

\end{example}

\begin{example}
[Mehler's formula]\label{mehler.ex}Let us
use~\eqref{eq:heat_flow_exponential_new} to derive the following well-known
Mehler formula (see, e.g., \cite[18.18.28]{NIST}):
\begin{equation}
\sum_{n=0}^{\infty}\frac{\rho^{n}}{n!}\operatorname{He}_{n}%
(x)\operatorname{He}_{n}(y)=\frac{1}{\sqrt{1-\rho^{2}}}\exp\left\{
-\frac{\rho^{2}(x^{2}+y^{2})-2\rho xy}{2(1-\rho^{2})}\right\}  ,
\label{eq:mehler_formula}%
\end{equation}
for all $x,y\in\mathbb{C}$ and $\rho\in\mathbb{D}$. Using the generating
function for the Hermite polynomials given by
\[
F(u)=\mathrm{e}^{xu-\frac{1}{2}u^{2}}=\sum_{n=0}^{\infty}\operatorname{He}%
_{n}(x)\frac{u^{n}}{n!},
\]
for fixed $x\in\mathbb{C}$, and using the definition of the heat flow given
in~\eqref{eq:heat_flow_def_2}, we get
\begin{align*}
\sum_{n=0}^{\infty}\frac{\rho^{n}}{n!}\operatorname{He}_{n}%
(x)\operatorname{He}_{n}(y)&=\mathrm{e}^{-\frac{1}{2}\left(  \frac{d\,}%
{d\,y}\right)  ^{2}}\sum_{n=0}^{\infty}\frac{\rho^{n}}{n!}\operatorname{He}%
_{n}(x)y^{n}=\mathrm{e}^{-\frac{1}{2}\left(  \frac{d\,}{d\,y}\right)  ^{2}%
}\mathrm{e}^{\rho xy-\frac{1}{2}\rho^{2}y^{2}}\\
&=\mathrm{e}^{-\frac{\rho}%
{2}D^{2}}\big[F\big](\rho y).
\end{align*}
Applying~\eqref{eq:heat_flow_exponential_new}
yields~\eqref{eq:mehler_formula}. From this perspective, Mehler's formula can
be seen as the heat evolved generating function after \textquotedblleft
time\textquotedblright\ $\rho$.
\end{example}

\subsection{Evolution of the zeros}

We start with an elementary application of the implicit function theorem to
the zeros of $(e^{-\tau D^{2}/2}F)(z).$

\begin{proposition}
\label{diffZero.prop}Suppose $F$ is an entire function of order $\rho<2$ or
order $\rho=2$ and finite type $\sigma$ and let $F(z,\tau)=(e^{-\tau D^{2}%
/2}F)(z)$, where we assume $\left\vert \tau\right\vert <1/(2\sigma)$ when
$\rho=2.$ Suppose $z_{0}$ is a simple zero of $F(z,\tau_{0}).$ Then we can
find a unique holomorphic function $z(\tau),$ defined for $\tau$ in some disk
centered at $\tau_{0}$, such that $z(\tau_{0})=z_{0}$ and such that
$F(z(\tau),\tau)=0$. Furthermore, we have%
\begin{equation}
z^{\prime}(\tau)=\frac{1}{2}\frac{\partial_{1}^{2}F(z(\tau),\tau)}%
{\partial_{1}F(z(\tau),\tau)}. \label{zPrime1}%
\end{equation}
We can also write this result as%
\begin{equation}
z^{\prime}(\tau)=\left.  \left(  \frac{\partial_{1}F(z(\tau),\tau)}%
{F(z(\tau),\tau)}-\frac{1}{z-z(\tau)}\right)  \right\vert _{z=z(\tau)},
\label{zPrime2}%
\end{equation}
where part of the statement is that the right-hand side of (\ref{zPrime2}) has
a removable singularity at $z=z(\tau).$
\end{proposition}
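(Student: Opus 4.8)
The plan is to apply the holomorphic implicit function theorem to the function of two complex variables $(z,\tau)\mapsto F(z,\tau)=(e^{-\tau D^2/2}F)(z)$. By Theorem~\ref{theo:entire_function_heat_flow_exists}, $F(z,\tau)$ is jointly holomorphic in $(z,\tau)$ on $\mathbb{C}^2$ in situation (S1) and on $\mathbb{C}\times\{|\tau|<1/(2\sigma)\}$ in situation (S2). Since $z_0$ is a \emph{simple} zero of $z\mapsto F(z,\tau_0)$, we have $\partial_1 F(z_0,\tau_0)\neq 0$, so the implicit function theorem gives a unique holomorphic $z(\tau)$ on a disk about $\tau_0$ with $z(\tau_0)=z_0$ and $F(z(\tau),\tau)\equiv 0$; shrinking the disk if necessary we keep $\partial_1 F(z(\tau),\tau)\neq 0$ throughout, so differentiating the identity $F(z(\tau),\tau)=0$ in $\tau$ yields
\[
\partial_1 F(z(\tau),\tau)\,z'(\tau)+\partial_2 F(z(\tau),\tau)=0.
\]

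Next I would convert the $\tau$-derivative into a second $z$-derivative using the (backward) heat equation from Point~4 of Lemma~\ref{lem:heat_flow_properties1}, namely $\partial_2 F(z,\tau)=-\tfrac12\,\partial_1^2 F(z,\tau)$. Substituting this into the displayed identity and solving for $z'(\tau)$ gives exactly
\[
z'(\tau)=\frac{1}{2}\,\frac{\partial_1^2 F(z(\tau),\tau)}{\partial_1 F(z(\tau),\tau)},
\]
which is~\eqref{zPrime1}.

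For~\eqref{zPrime2}, the idea is to write $G(z):=F(z,\tau)$ locally near its simple zero $z(\tau)$ as $G(z)=(z-z(\tau))H(z)$ with $H$ holomorphic and $H(z(\tau))=G'(z(\tau))=\partial_1 F(z(\tau),\tau)\neq 0$. A direct computation gives $G'(z)=H(z)+(z-z(\tau))H'(z)$ and $G''(z)=2H'(z)+(z-z(\tau))H''(z)$, so the logarithmic-derivative expression can be expanded:
\[
\frac{G'(z)}{G(z)}-\frac{1}{z-z(\tau)}=\frac{G'(z)(z-z(\tau))-G(z)}{G(z)(z-z(\tau))}=\frac{(z-z(\tau))^2 H'(z)}{(z-z(\tau))^2 H(z)}=\frac{H'(z)}{H(z)},
\]
which is plainly holomorphic near $z=z(\tau)$, establishing the removable-singularity claim. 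Evaluating at $z=z(\tau)$ gives $H'(z(\tau))/H(z(\tau))$; and since $G'(z(\tau))=H(z(\tau))$ and $G''(z(\tau))=2H'(z(\tau))$, this equals $\tfrac12\,G''(z(\tau))/G'(z(\tau))=\tfrac12\,\partial_1^2 F(z(\tau),\tau)/\partial_1 F(z(\tau),\tau)$, matching~\eqref{zPrime1}. So~\eqref{zPrime2} is just a rewriting of~\eqref{zPrime1} via the local factorization.

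The only point requiring a little care—really the main (mild) obstacle—is invoking joint holomorphy of $F(z,\tau)$ in the correct domain and checking that the relevant partial derivatives may be taken under the series, but this is guaranteed by the absolute and locally uniform convergence asserted in Theorem~\ref{theo:entire_function_heat_flow_exists}; everything else is the standard implicit function theorem plus the elementary algebra of the local factorization at a simple zero.
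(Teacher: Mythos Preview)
Your proof is correct and follows essentially the same approach as the paper: apply the holomorphic implicit function theorem (using the joint holomorphy from Theorem~\ref{theo:entire_function_heat_flow_exists}), differentiate $F(z(\tau),\tau)=0$, use the heat equation $\partial_2 F=-\tfrac12\partial_1^2 F$ to obtain~\eqref{zPrime1}, and then verify~\eqref{zPrime2} via the local factorization $G(z)=(z-z(\tau))H(z)$ at a simple zero. Your write-up is in fact slightly more careful than the paper's in citing the source of joint holomorphy and in spelling out the factorization computation.
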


\begin{proof}
Since $z$ is a simple zero, $F^{\prime}(z)$ is nonzero and thus $\partial
_{1}F(z,\tau)$ is nonzero at $(z,0).$ Thus, by the holomorphic version of the
implicit function theorem, we can solve the equation $F(z,\tau)=0$ uniquely
for $z$ as a holomorphic function of $\tau$ near $\tau=0,$ with $z(0)=z.$

Then by differentiating the relation $F(z(\tau),\tau)$ with respect to $\tau,$
we find that
\[
z^{\prime}(\tau)=-\frac{\partial_{2}F}{\partial_{1}F}(z(\tau),\tau).
\]
Then since $\partial_{2}F(z,\tau)=-\frac{1}{2}\partial_{2}F(z,\tau)$, we
obtain the claimed formula (\ref{zPrime1}) for $z^{\prime}(\tau).$ It is then
an elementary calculation with power series to verify that if a holomorphic
function $f$ has a simple zero at $c,$ then%
\[
\frac{f^{\prime\prime}(c)}{f^{\prime}(c)}=2\left.  \left(  \frac{f^{\prime
}(z)}{f(z)}-\frac{1}{z-c}\right)  \right\vert _{z=c},
\]
from which (\ref{zPrime2}) follows.
\end{proof}

We now record a well-known result about the evolution of the zeros in the
polynomial case.

\begin{proposition}
\label{prop:PolyEvolve}Suppose that, for some $\sigma\in\mathbb{C},$ the zeros
of $p$ are distinct. Then for all $\tau$ in a neighborhood of $\sigma,$ it is
possible to order the zeros of $e^{-\tau D^{2}/2}p$ as $z_{1}(\tau
),\ldots,z_{N}(\tau)$ so that each $z_{j}(\tau)$ depends holomorphically on
$\tau$ and so that the collection $\{z_{j}(\tau)\}_{j=1}^{N}$ satisfies the
following system of holomorphic differential equations:%
\begin{equation}
\frac{dz_{j}(\tau)}{d\tau}=\sum_{k:~k\neq j}\frac{1}{z_{j}(\tau)-z_{k}(\tau)}.
\label{theODE}%
\end{equation}
The paths $z_{j}(\tau)$ then satisfy%
\begin{equation}
\frac{d^{2}z_{j}(\tau)}{d\tau^{2}}=-2\sum_{k:~k\neq j}\frac{1}{(z_{j}%
(\tau)-z_{k}(\tau))^{3}}. \label{secondDeriv}%
\end{equation}

\end{proposition}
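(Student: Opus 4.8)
The plan is to treat this as a standard computation that reduces the action of the heat flow on polynomials to the classical dynamics of zeros, which is already known in the literature (this is the ``Hermite flow'' or Steinerberger/Shlyakhtenko-type PDE at the finite level). First I would note that since $p$ has distinct zeros at $\tau=\sigma$, the function $F(z,\tau)=(e^{-\tau D^2/2}p)(z)$ is a polynomial in $z$ of the same degree $N$ for all $\tau$ (the leading coefficient is unchanged because $D^{2k}$ lowers degree), and it depends holomorphically on $\tau$. Applying Proposition~\ref{diffZero.prop} to each of the $N$ simple zeros $z_j(\sigma)$ gives holomorphic functions $z_j(\tau)$ on a common small disk around $\sigma$, which for $\tau$ near $\sigma$ remain distinct and hence exhaust the zeros of $F(\cdot,\tau)$.

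Next I would plug into formula~\eqref{zPrime2} from Proposition~\ref{diffZero.prop}: writing $F(z,\tau)=c\prod_{k=1}^N (z-z_k(\tau))$, we have $\partial_1 F(z,\tau)/F(z,\tau)=\sum_k 1/(z-z_k(\tau))$, so
\[
z_j'(\tau)=\left.\left(\sum_{k=1}^N\frac{1}{z-z_k(\tau)}-\frac{1}{z-z_j(\tau)}\right)\right|_{z=z_j(\tau)}=\sum_{k\neq j}\frac{1}{z_j(\tau)-z_k(\tau)},
\]
which is exactly~\eqref{theODE}; the removable singularity asserted in Proposition~\ref{diffZero.prop} is precisely what makes this evaluation legitimate. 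For~\eqref{secondDeriv} I would differentiate~\eqref{theODE} once more with respect to $\tau$, obtaining
\[
z_j''(\tau)=-\sum_{k\neq j}\frac{z_j'(\tau)-z_k'(\tau)}{(z_j(\tau)-z_k(\tau))^2},
\]
then substitute~\eqref{theODE} for $z_j'$ and $z_k'$ and simplify. The resulting double sum, after symmetrizing over the pair $(j,k)$ and using the partial-fraction identity
\[
\frac{1}{(z_j-z_k)^2}\left(\frac{1}{z_j-z_\ell}-\frac{1}{z_k-z_\ell}\right)=\frac{1}{(z_j-z_k)(z_j-z_\ell)(z_k-z_\ell)}\cdot(\text{stuff}),
\]
collapses: all the ``three-index'' terms ($k,\ell$ distinct from $j$ and each other) cancel in pairs by antisymmetry, and only the ``two-index'' diagonal contributions $\ell=k$ survive, leaving $-2\sum_{k\neq j}(z_j-z_k)^{-3}$.

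The main obstacle is the bookkeeping in the last step: one must carefully separate the terms in the double sum $\sum_{k\neq j}\sum_{\ell\neq k}$ according to whether $\ell=j$, $\ell=k$, or $\ell\notin\{j,k\}$, and verify that the $\ell\notin\{j,k\}$ part vanishes by an antisymmetry argument while the $\ell=j$ and $\ell=k$ parts combine to give the stated coefficient $-2$. This is the one genuinely computational point; everything else is a direct application of the implicit function theorem machinery already set up in Proposition~\ref{diffZero.prop}. I would present the cancellation via the cyclic identity
\[
\frac{1}{(a-b)^2(a-c)}+\frac{1}{(b-c)^2(b-a)}+\frac{1}{(c-a)^2(c-b)}=0,
\]
applied to $(a,b,c)=(z_j,z_k,z_\ell)$ summed over unordered triples, which makes the vanishing transparent and leaves exactly the diagonal terms contributing $-2\sum_{k\neq j}(z_j-z_k)^{-3}$.
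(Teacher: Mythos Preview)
Your approach is correct and matches the paper's own treatment, which simply remarks that \eqref{theODE} follows from \eqref{zPrime2} and that \eqref{secondDeriv} is then a ``routine calculation.'' Your write-up fills in exactly those details.

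One correction: the cyclic identity you state at the end,
\[
\frac{1}{(a-b)^2(a-c)}+\frac{1}{(b-c)^2(b-a)}+\frac{1}{(c-a)^2(c-b)}=0,
\]
is false (try $a=0$, $b=1$, $c=2$). It is also not quite the right tool, since $j$ is fixed and you are summing only over $k,\ell$. What actually makes the three-index piece vanish is precisely the antisymmetry you already mention: after simplifying
\[
\frac{1}{(z_j-z_k)^2}\left(\frac{1}{z_j-z_\ell}-\frac{1}{z_k-z_\ell}\right)
=\frac{-1}{(z_j-z_k)(z_j-z_\ell)(z_k-z_\ell)},
\]
the summand is antisymmetric under $k\leftrightarrow\ell$, so $\sum_{k\neq j}\sum_{\ell\neq j,k}(\cdots)=0$. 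Drop the cyclic identity and keep the antisymmetry argument; the rest stands.
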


The equation (\ref{theODE}) can easily be obtained from (\ref{zPrime2}); then
(\ref{secondDeriv}) follows by a routine calculation. The result in
(\ref{theODE}) is discussed on Terry Tao's blog \cite{tao_blog1,tao_blog2} and
dates back at least to the work of the Chudnovsky brothers \cite{Chudnovsky}.
See also Csordas, Smith, and Varga \cite{csordas_smith_varga} and Hall--Ho
\cite{hallho}.

The formula (\ref{secondDeriv}) is the equation of motion for the
\textbf{rational Calogero--Moser system}. (Take $\omega=0$ and $g^{2}=-1/N$ in
the notation of \cite[Eq. (3)]{Cal}.) It follows that solutions to
(\ref{theODE}) are special cases of solutions to the rational Calogero--Moser
system, in which the initial velocities are chosen to satisfy (\ref{theODE})
at $\tau=0.$

If we replace the polynomial $p$ with an entire function $F,$ the result of
Proposition \ref{prop:PolyEvolve} will not necessarily hold, even if $F$ has
only finitely many zeros. After all, there can be many entire functions with
the same finite set of zeros and the zeros of these different functions will
not evolve in the same way under the heat flow. Furthermore, for a general
entire function, the sum on the right-hand side of (\ref{theODE}) will not be convergent.

We will derive versions of (\ref{theODE}) and (\ref{secondDeriv}) for entire
functions in Section 4. Remarkably, we will find that the second-derivative
formula (\ref{secondDeriv}) holds exactly as written---as a convergent
infinite sum---for entire functions of order $\rho<2$ \textit{and} for entire
functions of order $\rho=2$ and finite type. This result holds even though a
function of this sort is not uniquely determined by its zeros. The
first-derivative formula (\ref{theODE}), however, will need to be modified appropriately.

\subsection{Examples}

We now consider several explicit examples of how an entire function (and its
zeros) evolve under the heat flow.

\begin{example}
Consider the function
\[
F(z)=e^{a_{0}+a_{1}z}\sin(\pi z)=\frac{1}{2}e^{a_{0}+(a_{1}+i\pi)z}-\frac
{1}{2}e^{a_{0}+(a_{1}-i\pi)z}%
\]
which has a simple zero at each integer. Then by Example
\ref{ex:heat_flow_applied_to_zn_e_az_sq_bz}, we have
\begin{align*}
e^{-\tau D^{2}/2}F(z)  &  =\frac{1}{2}e^{a_{0}+(a_{1}+\pi i)z-\frac{\tau}%
{2}(a_{1}+\pi i)^{2}}-\frac{1}{2}e^{a_{0}+(a_{1}-\pi i)z-\frac{\tau}{2}%
(a_{1}-\pi i)^{2}}\\
&  =e^{a_{0}+a_{1}z-\frac{\tau}{2}a_{1}^{2}+\frac{\tau}{2}\pi^{2}}\sin
(\pi(z-\tau a_{1})),
\end{align*}
which has zeros at $n+\tau a_{1},$ $n\in\mathbb{Z}.$
\end{example}

\begin{example}
Let $F(z)=\sin(\pi z^{2}),$ so that the zeros are initially located on the
real and imaginary axes at the points $z_{n}^{+}=\sqrt{n}$, $z_{n}^{-}%
=-\sqrt{n}$, for $n\in\mathbb{Z}$. Note that there is a zero of multiplicity
$2$ at $0$ which we write as $z_{0}^{+}=z_{0}^{-}=0$.
By~\eqref{eq:heat_flow_exponential_new}, for every $\tau\in\mathbb{C}$ with
$|2\pi\tau|<1$ we have
\[
e^{-\tau D^{2}/2}F(z)=\frac{1}{2}\frac{\exp\left\{  \frac{\pi iz^{2}}{1+2\pi
i\tau}\right\}  }{\sqrt{1+2\pi i\tau}}-\frac{1}{2}\frac{\exp\left\{
-\frac{\pi iz^{2}}{1-2\pi i\tau}\right\}  }{\sqrt{1-2\pi i\tau}}.
\]
Equating this to zero yields
\[
\exp\left\{  \frac{2\pi iz^{2}}{1+4\pi^{2}\tau^{2}}\right\}  =\sqrt
{\frac{1+2\pi i\tau}{1-2\pi i\tau}}.
\]

The solutions, defined for $|\tau|<1/(2\pi)$, take the form
\begin{align}
z_{n}^{\pm}(\tau)  &  =\pm\sqrt{\left(  \frac{1}{4\pi i}\log\left(
\frac{1+2\pi i\tau}{1-2\pi i\tau}\right)  +n\right)  \cdot(1+4\pi^{2}\tau
^{2})}\nonumber\\
&  =\pm\sqrt{\left(  \frac{\arctan(2\pi\tau)}{2\pi}+n\right)  \cdot(1+4\pi
^{2}\tau^{2})}, \label{eq:zeros_at_square_roots_heat_flow}%
\end{align}
for all $n\in\mathbb{Z}$, where we use the principal branch of $\arctan z$
satisfying $\arctan0=0$. Note that $z_{0}^{+}(\tau)$ and $z_{0}^{-}(\tau)$
denote two different trajectories (which meet at $\tau=0$). In a similar way,
one can compute the action of the heat flow on the function $G(z)=\cos(\pi
z^{2})$ with zeros at $z_{n}^{\pm}=\pm\sqrt{n+1/2}$, $n\in\mathbb{Z}$. The
final result is the same as in~\eqref{eq:zeros_at_square_roots_heat_flow}
with $n$ replaced by $n+1/2$.
\end{example}

\begin{example}
Let us consider the case when the initial conditions $z_{j}(0)$ form a lattice
in $\mathbb{C}$. To this end, take $F(z)$ to be the Jacobi theta function
\[
\vartheta(z;\sigma):=\sum_{n=-\infty}^{+\infty}q^{n^{2}}e^{2\pi izn},\qquad
q=e^{\pi i\sigma},
\]
where $z\in\mathbb{C}$ and $\sigma\in\mathbb{C}$ is a parameter satisfying
$\operatorname{Im}\tau>0$; see~\cite[Chapter XXII]{whittaker_watson_book}
or~\cite[Chapter~1]{lawden_book}. The zeros of $z\mapsto\vartheta(z;\tau)$
are simple and located at $m+n\sigma+\frac{1}{2}+\frac{\sigma}{2}$, for
$m,n\in\mathbb{Z}$. Thus, the zeros form a lattice in $\mathbb{C}$ with
periods $1$ and $\sigma$. Using the quasi-periodicity relation
\[
\vartheta(z+m+n\sigma;\sigma)=e^{-\pi in^{2}\sigma-2\pi inz}\vartheta
(z;\sigma),\qquad m,n\in\mathbb{Z},
\]
one checks that $z\mapsto\vartheta(z;\sigma)$ is an entire function of order
$2$ and type $\pi/\operatorname{Im}\sigma$.
Using~\eqref{eq:heat_flow_exponential_new} one checks that
\[
e^{-\tau D^{2}/2}\vartheta(z;\sigma)=\sum_{n=-\infty}^{+\infty}q^{n^{2}%
}e^{2\pi inz+2\pi^{2}n^{2}\tau}=\vartheta(z;\sigma-2\pi i\tau),
\]
provided $\left\vert \tau\right\vert <(\operatorname{Im}\tau)/(2\pi)$. It
follows that the zeros of $e^{-\tau D^{2}/2}\vartheta(z;\sigma)$ form a
lattice with periods $1$ and $\sigma-2\pi i\tau$.
\end{example}

\section{Gaussian analytic function undergoing the heat flow}

\subsection{The main result}
In this section, we prove our main result Theorem \ref{theo:GAF_invariant_generalization_zeros_introduction}, which we repeat below. 
Let $\tau\in\mathbb{D}$ and $F$ be an
entire function of order $\rho\leq2$, where if $\rho=2$ we assume $F$ has type
at most $1/2.$ Let $V_{\tau}F$ be the entire function given by%
\begin{equation}
\lbrack V_{\tau}F](z)=\left(  1-\left\vert \tau\right\vert ^{2}\right)
^{1/4}~e^{\bar\tau z^{2}/2}\left(  e^{-\tau D^{2}/2}F(\cdot)\right)  \left(
z\sqrt{1-\left\vert \tau\right\vert ^{2}}\right)  . \label{VtauDef}%
\end{equation}

Since the plane GAF $G$ is an entire function of order $\rho=2$ and type $\sigma=1/2$, it follows
from Theorem~\ref{theo:entire_function_heat_flow_exists} that $\exp\{-\tau
D^{2}/2\}G$ exists (as an entire function) for all $\tau\in\mathbb{D}$.

\begin{theorem}
\label{theo:GAF_invariant} 
If $G$ is a GAF, then $V_{\tau}G$ is also a GAF. 
\end{theorem}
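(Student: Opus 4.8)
The plan is to show that $V_\tau G$ is a complex Gaussian process whose covariance function matches that of the GAF, namely $\mathbb{E}[(V_\tau G)(z)\overline{(V_\tau G)(w)}] = e^{z\bar w}$, which by \eqref{eq:GAF_covariance} characterizes the GAF among complex Gaussian analytic functions. Gaussianity is immediate: for fixed $\tau$, the operator $V_\tau$ is a (deterministic) continuous linear operation on the sample paths of $G$ --- multiplication by the entire function $e^{\tau z^2/2}$, composition with the dilation $z\mapsto z\sqrt{1-|\tau|^2}$, and the heat operator $e^{-\tau D^2/2}$, the last of which acts on the Taylor coefficients $\xi_n/\sqrt{n!}$ in the linear fashion described by \eqref{eq:heat_flow_def_2}. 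Hence each finite-dimensional marginal $((V_\tau G)(z_1),\dots,(V_\tau G)(z_k))$ is a (complex-linear) image of Gaussians and therefore complex Gaussian; one should note $(V_\tau G)(0) = (1-|\tau|^2)^{1/4}(e^{-\tau D^2/2}G)(0)$ and more generally that the coefficients remain a convergent Gaussian series since $|\tau|<1$ and $G$ has type $1/2$, so Theorem~\ref{theo:entire_function_heat_flow_exists} guarantees everything is well defined.

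The substance is the covariance computation. The cleanest route is via the integral representation \eqref{SBheat}: since the GAF satisfies the reproducing-kernel-type identity $\mathbb{E}[G(z)\overline{G(w)}] = e^{z\bar w}$, and the heat operator on the relevant function class is given by integration against $\exp\{-\frac\tau2\bar w^2 + z\bar w\}$ with respect to the Gaussian measure $e^{-|w|^2}/\pi\,dw$, I would compute $\mathbb{E}[(e^{-\tau D^2/2}G)(z)\overline{(e^{-\tau D^2/2}G)(w)}]$ by bringing the expectation inside the two integrals (justified by the convergence established in Section~\ref{sec:heatOp}, or alternatively done purely formally on Taylor series and then legitimized by absolute convergence for $|\tau|<1$). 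This reduces to a Gaussian integral over $\mathbb{C}$ in the variable that carries $\mathbb{E}[G\bar G]=e^{\cdot}$, producing a closed form of the shape $(1-|\tau|^2)^{-1/2}\exp\{(\text{quadratic in }z,\bar z, w,\bar w)/(1-|\tau|^2)\}$. The quadratic form in the exponent, by a Mehler-type identity, should come out to exactly $\frac{1}{1-|\tau|^2}(z\bar w + \frac\tau2 z^2 \cdot(\dots))$ --- the key is that after multiplying by the prefactors $(1-|\tau|^2)^{1/4}e^{\tau z^2/2}$ and $(1-|\tau|^2)^{1/4}e^{\bar\tau \bar w^2/2}$ from the two copies of $V_\tau$, and after the dilation $z\mapsto z\sqrt{1-|\tau|^2}$, $w\mapsto w\sqrt{1-|\tau|^2}$, all the unwanted terms (the pure $z^2$, $\bar z^2$, $w^2$, $\bar w^2$ pieces and the $|\tau|^2$ factors) cancel, leaving precisely $e^{z\bar w}$.

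Alternatively, and perhaps more transparently, one can work directly with Taylor coefficients: write $(e^{-\tau D^2/2}G)(z) = \sum_n \frac{\xi_n}{\sqrt{n!}} \tau^{n/2}\operatorname{He}_n(z/\sqrt\tau)$ from \eqref{eq:heat_flow_def_2}, so that $\mathbb{E}[(e^{-\tau D^2/2}G)(z)\overline{(e^{-\tau D^2/2}G)(w)}] = \sum_n \frac{1}{n!} \tau^{n/2}\bar\tau^{n/2}\operatorname{He}_n(z/\sqrt\tau)\overline{\operatorname{He}_n(w/\sqrt\tau)} = \sum_n \frac{|\tau|^n}{n!}\operatorname{He}_n(z/\sqrt\tau)\operatorname{He}_n(\overline{w/\sqrt\tau})$, and then apply Mehler's formula \eqref{eq:mehler_formula} with $\rho = |\tau|$, $x = z/\sqrt\tau$, $y = \bar w/\sqrt{\bar\tau}$ (valid since $|\tau|<1$, and the branch issues cancel because only $\tau^{n/2}\bar\tau^{n/2}=|\tau|^n$ appears). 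This gives a clean closed expression, after which incorporating the dilation and the exponential prefactors is an elementary algebraic simplification. I would present this Mehler-based computation as the main argument.

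The main obstacle I anticipate is purely bookkeeping: tracking the quadratic exponent through the Mehler formula, the rescaling $z\mapsto z\sqrt{1-|\tau|^2}$, and the two conjugate prefactors $e^{\tau z^2/2}$ and $e^{\bar\tau\bar w^2/2}$, and verifying that every term except $z\bar w$ cancels and that the constant prefactors $(1-|\tau|^2)^{1/4}\cdot(1-|\tau|^2)^{1/4}\cdot(1-\rho^2)^{-1/2}$ with $\rho=|\tau|$ multiply to $1$. There is also a minor technical point --- interchanging expectation with the infinite Taylor sum (or with the integral in \eqref{SBheat}) --- which is handled by the absolute/locally uniform convergence from Theorem~\ref{theo:entire_function_heat_flow_exists} together with $\sum_n \mathbb{E}|\xi_n|^2 \cdot (\text{coefficient bounds})<\infty$ for $|\tau|<1$; I would dispatch this in a sentence. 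No genuinely hard analytic input is needed beyond what Section~\ref{sec:heatOp} already supplies.
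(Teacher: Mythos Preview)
Your proposal is correct and follows essentially the same route as the paper: reduce to a covariance computation by Gaussianity, expand $e^{-\tau D^2/2}G$ via \eqref{eq:heat_flow_def_2} in Hermite polynomials, apply Mehler's formula \eqref{eq:mehler_formula}, and then check that the dilation and the prefactors $(1-|\tau|^2)^{1/4}e^{\tau z^2/2}$ reduce the result to $e^{z\bar w}$. The only cosmetic difference is that the paper records the slightly more general covariance $\mathbb{E}[G_\tau(z)\overline{G_\sigma(w)}]$ for distinct times $\tau,\sigma$ before specializing to $\sigma=\tau$, since that two-time version is reused later for the hyperbolic invariance in Section~\ref{sec:hyperbolic_invariance}.
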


Note that the factor of $\left(  1-\left\vert \tau\right\vert ^{2}\right)
^{1/4}~e^{\tau z^{2}/2}$ on the right-hand side of (\ref{VtauDef}) does not
affect the zeros of $e^{-\tau D^{2}/2}G.$ It follows that the \textit{zeros} of
$e^{-\tau D^{2}/2}G$ have the same distribution as the zeros of $G,$ up to a
scaling by a factor of $\sqrt{1-\left\vert \tau\right\vert ^{2}},$ leading to the following result:
 $$\mathcal Z (G)\stackrel{d}{=}\frac{\mathcal Z (e^{-\tau D^2/2}G)}{\sqrt{1-\left\vert \tau\right\vert ^{2}}}$$
%

\begin{remark}
In particular $G$ and
$
\big(  e^{-\tau
D^{2}/2}G\big)  \big(  z\sqrt{1-\vert \tau\vert ^{2}%
}\big)
$
have equal mean intensity of zeros. Thus, the equality in
distribution up to some deterministic analytic function $\varphi$ follows from
the \textquotedblleft Calabi rigidity\textquotedblright\ result of Sodin
\cite[Theorem 2.5.2]{hough_etal_book}. In Theorem \ref{theo:GAF_invariant}
however the weight function $\varphi$ is made explicit and turns out to be a Gaussian.
\end{remark}

\begin{proof}[Proof of Theorem \ref{theo:GAF_invariant}]
For $\tau\in\mathbb{D}$ consider the random entire function
\[
G_{\tau}(z):=\mathrm{e}^{-\tau D^{2}/2}G(z)=\sum_{n=0}^{\infty}\frac{\xi_{n}%
}{\sqrt{n!}}(\sqrt{\tau})^{n}\operatorname{He}_{n}\left(  \frac{z}{\sqrt{\tau
}}\right)  .
\]
Since the finite-dimensional distributions of the processes $(G_{\tau
}(z))_{z\in\mathbb{C}}$ and $(H_{\tau}(z))_{z\in\mathbb{C}}$ are multivariate
complex Gaussian, it suffices to verify the equality of the covariance
functions of these processes. In fact, we shall compute even the covariance
functions of the processes $(G_{\tau}(z))_{z\in\mathbb{C},\tau\in\mathbb{D}}$
and $(H_{\tau}(z))_{z\in\mathbb{C},\tau\in\mathbb{D}}$ (which, as will turn
out, are not equal). Take some $z,w\in\mathbb{C}$ and $\tau,\sigma
\in\mathbb{D}$. Then,
\[
\mathbb{E}\left[  G_{\tau}(z)\overline{G_{\sigma}(w)}\right]  =\sum
_{n=0}^{\infty}\frac{(\sqrt{\tau}\sqrt{\bar{\sigma}})^{n}}{n!}%
\operatorname{He}_{n}\left(  \frac{z}{\sqrt{\tau}}\right)  \operatorname{He}%
_{n}\left(  \frac{\overline{w}}{\sqrt{\bar{\sigma}}}\right)  ,
\]
where we used independence of the coefficients $\xi_{n}$. By the Mehler
formula~\eqref{eq:mehler_formula} it follows that
\begin{align}
\mathbb{E}\left[  G_{\tau}(z)\overline{G_{\sigma}(w)}\right]   &  =\frac
{1}{\sqrt{1-\tau\bar{\sigma}}}\exp\left\{  \frac{\tau\bar{\sigma}\left(
-\frac{z^{2}}{\tau}-\frac{\overline{w}^{2}}{\bar{\sigma}}\right)
+2z\overline{w}}{2(1-\tau\bar{\sigma})}\right\} \nonumber\\
&  =\frac{1}{\sqrt{1-\tau\bar{\sigma}}}\exp\left\{  -\frac{z^{2}\bar{\sigma
}+\overline{w}^{2}\tau}{2(1-\tau\bar{\sigma})}\right\}  \exp\left\{
\frac{z\overline{w}}{1-\tau\bar{\sigma}}\right\}  . \label{eq:cov_G_t_aux}%
\end{align}
Putting $\sigma=\tau$ and using the definition (\ref{VtauDef})\ of $V_{\tau
}G,$ we easily find that
\[
\mathbb{E}\left[  (V_{\tau}G)(z)\overline{(V_{\tau}G)(w)}\right]  =e^{z\bar
{w}},
\]
which coincides with $\mathbb{E}\left[  G(z)\overline{G(w)}\right]  .$
\end{proof}

We can give a second proof of Theorem \ref{theo:GAF_invariant} as follows. We
have already noted that the law of the GAF is the standard Gaussian measure
based on the Segal--Bargmann space $\mathcal{B}$ in (\ref{eq:SBdef}), in the
sense that the functions $z^{n}/\sqrt{n!}$ in (\ref{eq:GAFdef}) form an
orthonormal basis for $\mathcal{B}.$ Meanwhile, the restriction of $V_{\tau}$
to $\mathcal{B}$ is a unitary map, part of the family of unitary operators
making up the metaplectic representation. (See Section \ref{meta.sec}.)
Theorem \ref{theo:GAF_invariant} the follows from the unitarity of $V_{\tau}$
and the following elementary result.

\begin{proposition}
Suppose $U$ is a unitary map of $\mathcal{B}$ to itself and let $f_{n}%
(z)=U(z^{n}/\sqrt{n!}).$ Assume that the series
\[
G_{U}(z)=\sum_{n=0}^{\infty}\xi_{n}f_{n}(z)
\]
converges locally uniformly in $z$ almost surely. Then $G_{U}$ is again a GAF;
i.e., it has the same distribution as $G.$
\end{proposition}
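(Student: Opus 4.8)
The plan is to show that the random entire function $G_U$ has the same finite-dimensional distributions as the plane GAF $G$. Since $G_U$ is, by hypothesis, a well-defined random element of the space of entire functions with the topology of locally uniform convergence, and the GAF is characterized (as recalled in the introduction, with covariance \eqref{eq:GAF_covariance}) among processes with multivariate complex Gaussian finite-dimensional distributions by its covariance function, it suffices to establish two things for $G_U$: that its finite-dimensional distributions are centered complex Gaussian (with vanishing pseudo-covariance), and that $\mathbb{E}[G_U(z)\overline{G_U(w)}]=e^{z\bar w}$.

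First I would handle Gaussianity via the partial sums $G_U^{(N)}(z)=\sum_{n=0}^{N}\xi_n f_n(z)$. For any $z_1,\dots,z_k\in\mathbb{C}$ and scalars $c_1,\dots,c_k$, the quantity $\sum_j c_j G_U^{(N)}(z_j)$ is a finite linear combination of the i.i.d.\ standard complex Gaussians $\xi_n$, hence is itself a centered (proper) complex Gaussian. By hypothesis $G_U^{(N)}\to G_U$ almost surely and locally uniformly, so $\sum_j c_j G_U^{(N)}(z_j)\to\sum_j c_j G_U(z_j)$ almost surely, hence in distribution. A distributional limit of centered two-dimensional Gaussians is Gaussian provided the covariance matrices converge; the means all vanish, and the convergence of the relevant second moments is exactly the content of the next step, so the limiting finite-dimensional distributions are multivariate complex Gaussian.

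The covariance computation is where the unitarity of $U$ enters. Using $\mathbb{E}[\xi_n\overline{\xi_m}]=\delta_{nm}$ and $\mathbb{E}[\xi_n\xi_m]=0$, the partial sums satisfy $\mathbb{E}[G_U^{(N)}(z)\overline{G_U^{(N)}(w)}]=\sum_{n=0}^{N}f_n(z)\overline{f_n(w)}$ and $\mathbb{E}[G_U^{(N)}(z)G_U^{(N)}(w)]=0$. The key observation is that, since $U$ is unitary, $\{f_n\}_{n\ge 0}$ is an orthonormal basis of the Segal--Bargmann space $\mathcal{B}$, and the reproducing kernel of a reproducing-kernel Hilbert space is independent of the choice of orthonormal basis. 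Concretely, if $K_w\in\mathcal{B}$ is the reproducing kernel at $w$ (so that $F(w)=\langle F,K_w\rangle$ for all $F\in\mathcal{B}$, and $K_w(z)=e^{z\bar w}$ by the reproducing kernel formula \eqref{reproKernel}), then expanding $K_w=\sum_n\langle K_w,f_n\rangle f_n=\sum_n\overline{f_n(w)}\,f_n$ in $\mathcal{B}$ and evaluating at $z$ (evaluation functionals being continuous on $\mathcal{B}$) yields $\sum_n f_n(z)\overline{f_n(w)}=K_w(z)=e^{z\bar w}$, in particular convergence at $z=w$. Letting $N\to\infty$ gives $\mathbb{E}[G_U(z)\overline{G_U(w)}]=e^{z\bar w}$ and $\mathbb{E}[G_U(z)G_U(w)]=0$, matching $G$.

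The only genuinely delicate point is the interchange of the infinite sum with the expectation, i.e.\ the convergence of the covariances of the partial sums to $e^{z\bar w}$; I would handle this cleanly through the basis-independence of the reproducing kernel rather than by an ad hoc growth estimate on the $f_n$, after which the remaining arguments are routine.
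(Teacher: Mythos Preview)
Your proof is correct and follows essentially the same approach as the paper: the key step in both is that $\{f_n\}$ is an orthonormal basis of $\mathcal{B}$ (since $U$ is unitary), so the basis-independence of the reproducing kernel gives $\sum_n f_n(z)\overline{f_n(w)}=e^{z\bar w}$, which matches the covariance of $G$. You are more careful than the paper in spelling out the Gaussianity via partial sums, the vanishing pseudo-covariance, and the interchange of limits, but the core idea is identical.
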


The assumption on $U$ should not really be necessary, but it is easily
verified in the case $U=V_{\tau}$ (by the estimates in the proof of Theorem
\ref{theo:entire_function_heat_flow_exists}) and assuming it saves us some
technical difficulty.

\begin{proof}
Since the $\xi_{n}$'s are independent with mean zero and variance 1, we may
compute the covariance of $G_{U}(z)$ as%
\begin{align*}
\mathbb{E}\{G_{U}(z)\overline{G_{U}(w)}\}  &  =\sum_{n=0}^{\infty}%
f_{n}(z)\overline{f_{n}(w)}  =e^{z\bar{w}},
\end{align*}
giving the same covariance as for $G$ itself. Here we have used the formula
for the reproducing kernel of the Segal--Bargmann space, which may be computed
using an arbitrary orthonormal basis of $\mathcal{B}.$ (See Theorem 2.4 in
\cite{hall_holomorphic_methods}.)
\end{proof}

\begin{remark}
Taking $z=w=0$ in~\eqref{eq:cov_G_t_aux} implies that the covariance function
of the complex Gaussian process $(G_{\tau}(0))_{\tau\in\mathbb{D}}$ is given
by
\[
\mathbb{E}\left[  G_{\tau}(0)\overline{G_{s}(0)}\right]  =\frac{1}%
{\sqrt{1-\tau\overline{s}}},\qquad\tau,s\in\mathbb{D}.
\]
This random analytic function corresponds to the special case $L=1/2$ of the
family studied in Section~2.3 of~\cite{hough_etal_book}; see Equation~(2.3.6)
there. It is also shown in \cite[Lemma 2.3.4]{hough_etal_book} that this
analytic function does not extend to a larger domain than $\mathbb{D}$. The
law of the zero set of this process, that is the law of the point process
$\{\tau\in\mathbb{D}:\exp\{-\frac{\tau}{2}D^{2}\}[G](0)=0\}$ is invariant with
respect to the natural action of the group $SU(1,1)$ on $\mathbb{D}$; see
Proposition 2.3.4 in~\cite{hough_etal_book}. More general facts in this
direction will be established in Section~\ref{sec:hyperbolic_invariance}.
\end{remark}

\subsection{Approximation by polynomials}\label{subsec:approx}

Recall the definition of the GAF in (\ref{eq:GAFdef}). We introduce the
\textbf{truncated GAF} $W_{N}$ by
\[
W_{N}(z):=\sum_{n=0}^{N}\xi_{n}\frac{z^{n}}{\sqrt{n!}},
\]
where we use the same values of the random variables $\{\xi_{n}\}$ to define
$W_{N}$ and $G.$ Then $W_{N}$ is a \textbf{Weyl polynomial}, scaled so that
the zeros cluster asymptotically into a disk of radius $\sqrt{N}.$ In this
subsection, we provide an elementary result that the zeros of $e^{-\tau
D^{2}/2}G$ can be approximated by the zeros of $e^{-\tau D^{2}/2}W_{N}.$ (All
the simulations shown in the paper are made using this approximation.)\

Now, the first two authors have developed a general conjecture \cite{hallho}
concerning the behavior of zeros of polynomials under the heat flow. (In
\cite{hallho}, the heat flow is scaled as $e^{\tau D^{2}/(2N)},$ where $N$ is
the degree of the polynomials involved, so the formulas there need to be
adjusted slightly to match the sign convention and normalization of the heat
flow in the present paper.) Specifically, if the empirical measure of the
zeros of a high-degree polynomial $p$ resembles a measure $\mu,$ the
conjecture says that the zeros tend to evolve along straight lines:%
\[
z_{j}(\tau)\approx z_{j}(0)-\tau V(z_{j}(0)),
\]
where the \textquotedblleft velocity function\textquotedblright\ $V$ is the
Cauchy transform of the measure $\mu$:%
\[
V(z)=-\int_{\mathbb{C}}\frac{1}{z-w}~d\mu(w).
\]
In the case of the Weyl polynomials $W_{N},$ the measure $\mu$ would be
$\frac{1}{\pi}$ times the standard area measure on the disk of radius
$\sqrt{N}$ centered at 0, in which case%
\[
V(z)=-\bar{z}%
\]
for all $z$ in the disk. Thus, the approximate behavior (\ref{eq:zjTraj}) of
the zeros of the GAF is consistent with the conjecture in \cite{hallho}.

In a forthcoming paper, the authors of the present paper will verify this conjecture for a wide class of random
polynomials at the bulk level. However, the Gaussian nature of the GAF allows to even study the evolution of individual zeros, as we will do in the next section.

\begin{proposition}
Let $G$ be a GAF and let $W_{N}$ be its truncation. Fix a compact set
$K\subset\mathbb{C}$ and a complex number $\tau\in\mathbb{D}.$ Then the zeros
of $e^{-\tau D^{2}/2}G$ in $K$ are uniformly approximated by the zeros of
$e^{-\tau D^{2}/2}W_{N}$, in the following sense. Let $z_{1},\ldots,z_{k}$ be
the zeros of $e^{\tau D^{2}/2}G$ in $K$ and let $m_{1},\ldots,m_{k}$ be the
corresponding multiplicities. Choose $\varepsilon_{1},\ldots,\varepsilon_{k}$
small enough that the balls $B(z_{j},\varepsilon_{j})$ around $z_{j}$ are
disjoint. Then for all sufficiently large $N,$ (1) for each $j,$ there are
exactly $m_{j}$ zeros of $e^{\tau D^{2}/2}W_{N}$ (counted with their
multiplicities) in $B(z_{j},\varepsilon_{j}),$ and (2) every zero of $e^{\tau
D^{2}/2}W_{N}$ in $K$ belongs to one of the $B(z_{j},\varepsilon_{j}).$
\end{proposition}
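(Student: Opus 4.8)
The plan is to reduce everything to Hurwitz's theorem (equivalently, Rouch\'e's theorem) once we know that $e^{-\tau D^{2}/2}W_{N}$ converges to $e^{-\tau D^{2}/2}G$ locally uniformly on $\mathbb{C}$. The point that makes this work is that, since $W_{N}$ is a polynomial, linearity of the heat operator together with~\eqref{eq:eq:hermite_exp_on_z^n_with_sigma} gives
\[
e^{-\tau D^{2}/2}W_{N}(z)=\sum_{n=0}^{N}\frac{\xi_{n}}{\sqrt{n!}}(\sqrt{\tau})^{n}\operatorname{He}_{n}\left(\frac{z}{\sqrt{\tau}}\right),
\]
which is exactly the $N$-th partial sum of the series in~\eqref{eq:heat_flow_def_2} representing $e^{-\tau D^{2}/2}G$. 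Almost surely $G$ has order $\rho=2$ and type $\sigma=1/2$, so for the fixed $\tau\in\mathbb{D}$ we have $|\tau|<1=1/(2\sigma)$, and Theorem~\ref{theo:entire_function_heat_flow_exists} (applied to the a.s.\ realization of $G$, using~\eqref{eq:lindeloef_pringsheim}, which indeed holds a.s.\ since $|\xi_n|$ grows sub-exponentially) tells us that this series converges absolutely and uniformly on compact subsets of $\mathbb{C}$. Hence, on the full-measure event on which $G$ has the stated order and type, $f_{N}:=e^{-\tau D^{2}/2}W_{N}\to f:=e^{-\tau D^{2}/2}G$ locally uniformly.

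I would next record that $f\not\equiv 0$ almost surely: $f(0)=\sum_{n}\frac{\xi_{n}}{\sqrt{n!}}(\sqrt{\tau})^{n}\operatorname{He}_{n}(0)$ is a nondegenerate centered complex Gaussian, hence nonzero a.s.\ (alternatively, invoke Theorem~\ref{theo:GAF_invariant}: $V_{\tau}G$ is a GAF and differs from $f\big(z\sqrt{1-|\tau|^{2}}\big)$ only by the nowhere-vanishing factor $(1-|\tau|^{2})^{1/4}e^{\tau z^{2}/2}$). From here on work on the intersection of these two almost sure events. In particular the zeros of $f$ are isolated, and $z_{1},\dots,z_{k}$ are the only ones in $K$; consistently with the phrase ``small enough,'' we may assume that each closed ball $\overline{B(z_{j},\varepsilon_{j})}$ lies in the interior of $K$ and meets $\mathcal{Z}(f)$ only at $z_{j}$, so that $f$ is zero-free on every circle $\gamma_{j}=\partial B(z_{j},\varepsilon_{j})$.

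For part (1), set $\delta=\min_{j}\min_{\gamma_{j}}|f|>0$. Locally uniform convergence gives an $N_{0}$ such that $|f_{N}-f|<\delta\le|f|$ on $\bigcup_{j}\gamma_{j}$ for all $N\ge N_{0}$, and Rouch\'e's theorem then forces $f_{N}$ to have exactly $m_{j}$ zeros (counted with multiplicity) in $B(z_{j},\varepsilon_{j})$ for each $j$. For part (2), let $K'=K\setminus\bigcup_{j}B(z_{j},\varepsilon_{j})$; this is compact and contains no zero of $f$, so $\delta'=\min_{K'}|f|>0$, and enlarging $N_{0}$ if necessary we get $|f_{N}-f|<\delta'$ on $K'$, whence $f_{N}$ is zero-free on $K'$, i.e.\ every zero of $f_{N}$ in $K$ lies in one of the $B(z_{j},\varepsilon_{j})$.

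The only genuinely non-routine step is the first one: identifying $e^{-\tau D^{2}/2}W_{N}$ with the partial sums of the series for $e^{-\tau D^{2}/2}G$ and then quoting the locally uniform convergence already proved in Theorem~\ref{theo:entire_function_heat_flow_exists} for order-$2$, type-$1/2$ functions (the GAF being, a.s., of precisely this class). Everything after that is the standard Hurwitz/Rouch\'e argument, the only extra care being the measure-theoretic bookkeeping noted above, namely that the statement is to be understood as holding almost surely, on the event where $G$ has order $2$ and type $1/2$ and $e^{-\tau D^{2}/2}G\not\equiv 0$.
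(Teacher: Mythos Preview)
Your proof is correct and follows exactly the approach the paper indicates: the paper's own proof consists of the single sentence ``The result is a straightforward application of Rouch\'e's theorem (or the argument principle),'' together with a reference to Shirai's Lemma~2.2. You have simply supplied the details the paper omits---identifying $e^{-\tau D^{2}/2}W_{N}$ with the $N$-th partial sum of~\eqref{eq:heat_flow_def_2}, invoking Theorem~\ref{theo:entire_function_heat_flow_exists} for locally uniform convergence, and then carrying out the standard Rouch\'e/Hurwitz bookkeeping.
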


Note that the zeros of $e^{-\tau D^{2}/2}W_{N}$ in $B(z_{j},\varepsilon_{j})$
need not be in $K.$ We may think of applying the proposition with $K$ being
closed disk of large radius centered at the origin.

\begin{proof}
The result is a straightforward application of Rouch\'{e}'s theorem (or the
argument principle). Compare Lemma 2.2 in~\cite{shirai}.
\end{proof}

\subsection{The evolution of individual zeros}

In this section, we provide the proof of Theorem \ref{theo:distributionZeros}.
We make use of the \textquotedblleft unitarized translation\textquotedblright%
\ operators $T_{a},$ $a\in\mathbb{C},$ acting on the space of all entire
functions and given by%
\[
(T_{a}F)(z)\mapsto e^{-\left\vert a\right\vert ^{2}/2}e^{\bar{a}z}F(z-a).
\]
For each $a,$ $T_{a}$ is a unitary map of the Segal--Bargmann space to itself
\cite[Theorem 4.2]{hall_holomorphic_methods}. Furthermore, if $G$ is a GAF,
then $T_{a}G$ is also a GAF \cite[proof of Proposition 2.3.4]{hough_etal_book}.

\begin{lemma}
\label{lem:shiftZeros}Suppose $F$ is an entire function of order $\rho<2$ or
an entire function of order 2 and finite type $\sigma.$ Fix $\tau\in
\mathbb{C},$ where if $F$ has order 2 and type $\sigma,$ we assume $\left\vert
\tau\right\vert <1/(2\sigma)$ and fix $a\in\mathbb{C}.$ Then $e^{-\tau D^{2}%
/2}F$ has a zero at $z$ if and only if $e^{-\tau D^{2}/2}(T_{a}F)$ has a zero
at $z+a+\tau\bar{a}.$
\end{lemma}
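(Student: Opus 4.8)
The plan is to reduce the statement to two already-proven facts: Proposition~\ref{prop:heat_flow_and_mult_by_exp} on the heat flow of $\mathrm{e}^{\frac a2 z^2+bz}F(z)$ (in the special case with quadratic coefficient $0$, i.e.\ multiplication by a pure exponential), together with the translation-invariance of the heat flow in Point~2 of Lemma~\ref{lem:heat_flow_properties1}. First I would write the unitarized translation explicitly,
\[
(T_a F)(z)=\mathrm{e}^{-|a|^2/2}\,\mathrm{e}^{\bar a z}\,F(z-a),
\]
observing that this is a nowhere-vanishing constant times $\mathrm{e}^{\frac{0}{2}z^2+\bar a z}$ times the translate $F(\cdot-a)$. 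Since translating $F$ by $a$ changes neither its order nor its type, the function $F(\cdot-a)$ still satisfies (S1) or (S2) with the same $\sigma$, so Proposition~\ref{prop:heat_flow_and_mult_by_exp} applies to $\mathrm{e}^{\bar a z}F(z-a)$ with its parameter ``$a$'' set to $0$ and ``$b$'' set to $\bar a$; the admissible range of the time variable is then exactly $|\tau|<1/(2\sigma)$, matching the hypothesis of the lemma. (Passing from $\mathrm{e}^{-\tau D^2/2}$ to the sign $\mathrm{e}^{+\tau D^2/2}$ used here is harmless, since replacing $\tau$ by $-\tau$ preserves the constraint.)

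Carrying this out, the $a=0$ case of \eqref{eq:prop:heat_flow_mult_by_1} (with $\tau$ replaced by $-\tau$) gives
\[
\mathrm{e}^{+\tau D^2/2}\big[\mathrm{e}^{\bar a z}F(z-a)\big]
=\mathrm{e}^{\bar a z+\tau\bar a^2/2}\,\big(\mathrm{e}^{+\tau D^2/2}[F(\cdot-a)]\big)(z+\bar a\tau),
\]
and then Point~2 of Lemma~\ref{lem:heat_flow_properties1} moves the translation outside the heat operator, turning the right-hand factor into $\big(\mathrm{e}^{+\tau D^2/2}F\big)(z+\bar a\tau-a)$. Reinstating the constant $\mathrm{e}^{-|a|^2/2}$, I obtain the identity
\[
\mathrm{e}^{+\tau D^2/2}(T_a F)(z)=\mathrm{e}^{-|a|^2/2}\,\mathrm{e}^{\bar a z+\tau\bar a^2/2}\,\big(\mathrm{e}^{+\tau D^2/2}F\big)(z+\bar a\tau-a).
\]

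The proof then finishes by the observation that the prefactor $\mathrm{e}^{-|a|^2/2}\mathrm{e}^{\bar a z+\tau\bar a^2/2}$ is a nowhere-vanishing entire function, so $\mathrm{e}^{+\tau D^2/2}(T_a F)$ vanishes at a point $z_0$ if and only if $\mathrm{e}^{+\tau D^2/2}F$ vanishes at $z_0+\bar a\tau-a$; taking $z_0=z+a-\tau\bar a$ gives $z_0+\bar a\tau-a=z$, which is precisely the asserted correspondence (and, since the displayed identity is an equality of entire functions up to an invertible holomorphic factor, multiplicities match as well, though the lemma only claims the statement about the location of zeros). I do not expect any genuine obstacle here: the content is the computation of $\mathrm{e}^{+\tau D^2/2}(T_a F)$ via Proposition~\ref{prop:heat_flow_and_mult_by_exp}, and the only care needed is the bookkeeping of the sign of $\tau$ and the verification that multiplication by $\mathrm{e}^{\bar a z}$ and translation by $a$ leave the type $\sigma$ unchanged so that the hypothesis $|\tau|<1/(2\sigma)$ is exactly what the cited proposition requires.
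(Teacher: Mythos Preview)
Your proof is correct and follows essentially the same approach as the paper's own proof: both apply Proposition~\ref{prop:heat_flow_and_mult_by_exp} in the case of vanishing quadratic coefficient (i.e., multiplication by a pure exponential $\mathrm{e}^{\bar a z}$) together with the translation-invariance of the heat flow from Lemma~\ref{lem:heat_flow_properties1}, and then read off the zero correspondence from the resulting identity, the prefactor being nowhere vanishing. The only cosmetic differences are that the paper first rewrites $\mathrm{e}^{-|a|^2/2}\mathrm{e}^{\bar a z}=\mathrm{e}^{|a|^2/2}\mathrm{e}^{\bar a(z-a)}$ before invoking translation invariance (whereas you apply the proposition to the translate $F(\cdot-a)$ directly), and that the paper carries out the computation with the sign $\mathrm{e}^{-\tau D^2/2}$ and switches at the end, while you track $\mathrm{e}^{+\tau D^2/2}$ throughout.
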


\begin{proof}
Using Proposition \ref{prop:heat_flow_and_mult_by_exp} with $a=0,$ we find
that
\[
e^{-\tau D^{2}/2}[e^{\bar{a}z}F(z)]=e^{-\tau\bar{a}^{2}/2}e^{\bar{a}%
z}(e^{-\tau D^{2}/2}F)(z-\tau\bar{a}).
\]
Using this result and that (Lemma \ref{lem:heat_flow_properties1}) the heat
operator commutes with translations, we find that%
\begin{align*}
e^{-\tau D^{2}/2}(T_{a}F)(z)  &  =e^{-\tau D^{2}/2}\left[  e^{-\left\vert
a\right\vert ^{2}/2}e^{\bar{a}z}F(z-a)\right] \\
&  =e^{\left\vert a\right\vert ^{2}/2}e^{-\tau D^{2}/2}\left[  e^{\bar
{a}(z-a)}F(z-a)\right] \\
&  =e^{\left\vert a\right\vert ^{2}/2}e^{-\tau D^{2}/2}[e^{\bar{a}%
z}F(z)](z-a)\\
&  =e^{\left\vert a\right\vert ^{2}/2}e^{-\tau\bar{a}^{2}/2}e^{\bar{a}%
z}[e^{-\tau D^{2}/2}F](z-a-\tau\bar{a}).
\end{align*}
Thus, $e^{-\tau D^{2}/2}(T_{a}F)$ has zero at $z$ if and only if $e^{-\tau
D^{2}/2}F$ has a zero at $z-\tau\bar{a}-a,$ which is equivalent to the claimed result.
\end{proof}

\begin{lemma}
\label{lem:GAFconditioned}If $G^{0}$ is a GAF conditioned to be zero at 0,
then $T_{a}G^{0}$ is a GAF conditioned to be zero at $a.$
\end{lemma}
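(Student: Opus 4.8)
The plan is to combine two facts that are already available: first, that $T_a$ is a unitary operator on the Segal--Bargmann space $\mathcal{B}$ which preserves the GAF in distribution (as recalled just before Lemma~\ref{lem:shiftZeros}); and second, that conditioning a GAF on a linear functional being zero interacts well with this unitarity. More precisely, I would exploit the fact that for a Gaussian process, conditioning on $G(a) = 0$ is equivalent to conditioning on the value of the inner product of $G$ with the reproducing kernel $K_a(z) = e^{\bar a z}$ at the point $a$, and that $T_a$ maps the reproducing kernel $K_0$ at $0$ (which is the constant function $1$) to a multiple of the reproducing kernel $K_a$ at $a$.

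The key steps, in order, would be as follows. First I would record that $T_a K_0 = e^{-|a|^2/2} K_a$: indeed $(T_a 1)(z) = e^{-|a|^2/2} e^{\bar a z} \cdot 1 = e^{-|a|^2/2} K_a(z)$, where $K_a(z) = e^{\bar a z}$ is the reproducing kernel at $a$. Second, I would note that the event $\{G(a) = 0\}$ is precisely the event $\{\langle G, K_a \rangle = 0\}$ in the formal pairing sense, and likewise $\{G(0) = 0\} = \{\langle G, K_0 \rangle = 0\}$. Third, since $T_a$ is unitary and $G \overset{d}{=} T_a G$ (as $G$ is a GAF), conditioning $G$ on $G(0) = 0$ and then applying $T_a$ gives the same law as applying $T_a$ first and then conditioning on the corresponding event; but the event $\{\langle G, K_0\rangle = 0\}$ transforms under $T_a$ (using unitarity, so that $\langle T_a^{-1} H, K_0 \rangle = \langle H, T_a K_0 \rangle$) into $\{\langle H, K_a \rangle = 0\} = \{H(a) = 0\}$, up to the irrelevant nonzero scalar $e^{-|a|^2/2}$. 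Hence $T_a G^0 \overset{d}{=} (T_a G)^a = G^a$, which is the claim.

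The main obstacle is making the conditioning argument rigorous, since the GAF does not lie in $\mathcal{B}$ (we have $\sum_n |\xi_n|^2 = \infty$ a.s.), so phrases like ``$\langle G, K_a\rangle$'' and ``$G$ is a Gaussian vector in $\mathcal{B}$'' must be interpreted via finite-dimensional distributions or via Gaussian regression rather than literally as Hilbert-space operations. The clean way around this is to work at the level of covariance structure: the conditioned field $G^a$ is the centered Gaussian field with covariance obtained from $\mathbb{E}[G(z)\overline{G(w)}] = e^{z\bar w}$ by the standard Gaussian conditioning formula given $G(a)=0$, namely $e^{z\bar w} - e^{z\bar a}e^{a\bar w}$ (using $\mathbb{E}[|G(a)|^2] = e^{|a|^2}$), and similarly $G^0$ has covariance $e^{z\bar w} - 1$. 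One then simply verifies by a direct computation, using the definition $(T_a F)(z) = e^{-|a|^2/2}e^{\bar a z}F(z-a)$, that applying $T_a$ to a field with covariance $e^{z\bar w}-1$ produces exactly a field with covariance $e^{z\bar w}-e^{z\bar a}e^{a\bar w}$; since both conditioned GAFs are centered Gaussian and determined by their covariances, and $T_a$ is linear so preserves Gaussianity, this identifies the laws. I would phrase the proof this way to sidestep all measure-theoretic subtleties about conditioning on measure-zero events, treating $G^a$ simply \emph{as defined} by its Gaussian covariance.
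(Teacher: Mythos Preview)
Your conceptual argument---$T_a$ is unitary on $\mathcal{B}$, maps the reproducing kernel $K_0$ to a scalar multiple of $K_a$, hence maps the codimension-one subspace $\{F:F(0)=0\}$ unitarily onto $\{F:F(a)=0\}$, and therefore pushes the conditioned Gaussian forward to the correctly conditioned Gaussian---is exactly the paper's proof, just phrased through the kernels rather than through the orthogonal decomposition $\ker\phi\oplus(\ker\phi)^\perp$.

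The explicit covariance verification you offer at the end is a genuinely different (and more elementary) route: the paper does not compute $\mathrm{Cov}(G^a)$ and check by hand that $T_a$ intertwines the two covariance kernels. This has the advantage you identify, namely that it sidesteps any discussion of ``standard Gaussian measure on a subspace of $\mathcal{B}$'' and works entirely at the level of finite-dimensional distributions, which is arguably cleaner given that the GAF does not live in $\mathcal{B}$. One small slip to fix: the conditional covariance of $G^a$ is
\[
\mathbb{E}\bigl[G^a(z)\overline{G^a(w)}\bigr]
= e^{z\bar w} - \frac{e^{z\bar a}\,e^{a\bar w}}{e^{|a|^2}}
= e^{z\bar w} - e^{-|a|^2}e^{z\bar a + a\bar w},
\]
not $e^{z\bar w}-e^{z\bar a}e^{a\bar w}$; you correctly note that $\mathbb{E}[|G(a)|^2]=e^{|a|^2}$ but then omit the division by it. With that correction the computation goes through: applying $T_a$ to a field with covariance $e^{z\bar w}-1$ yields exactly $e^{z\bar w}-e^{-|a|^2}e^{z\bar a+a\bar w}$, as required.
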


One may prove this lemma using a orthogonal decomposition of the Gaussian measure on the Segal-Bargmann space, but instead we shall present a different route via direct calculation of the covariance structure.

\begin{proof}
Let $G^a$ be a GAF conditioned on $G(a)=0$. Since both $G^a$ and $T_{a}G^{0}$ are centered Gaussian processes, it suffices to compare their covariances.

The GAF conditioned to be zero at $0$, that is $G^0(z)=\sum_{k=1}^\infty \xi_k\frac{z^k}{k!}$, has covariance $\mathbb E [G^0(z)\overline{G^0(w)}]=e^{z\bar w}-1$. Hence, $ (T_{a}G^0)(z)= e^{-\left\vert a\right\vert ^{2}/2}e^{\bar{a}z}G^0(z-a)$ has covariance
$$\mathbb E\big[(T_{a}G^0)(z),\overline{(T_{a}G^0)(w)}\big]=e^{-|a|^2+\bar az +a\bar w}\big(e^{(z-a)(\overline{ w- a})}-1)=e^{z\bar w}-e^{-|a|^2+\bar az +a\bar w}.$$

On the other hand, the covariance function of a conditional Gaussian is well known (see for instance \cite[Exercise 2.1.3]{hough_etal_book}), which in our case expresses as
$$\mathbb E\big[ G^a(z)\overline{G^a(w)}\big]=\mathbb E\big[ G(z)\overline{G(w)}|G(a)=0\big]=e^{z\bar w} -e^{\bar a z}e^{-|a|^2}e^{a\bar w}.$$
Thus, both covariance functions coincide and the claim follows.
\end{proof}

\begin{proof}
[Proof of Theorem \ref{theo:distributionZeros}]Let $G^{0}$ be a GAF
conditioned to be zero at 0. Then by Lemma \ref{lem:GAFconditioned},
$T_{a}G^{0}$ is a GAF conditioned to be zero at $a.$ The result then follows
from Lemma \ref{lem:shiftZeros}.
\end{proof}

\section{Connection to the group $SL(2;\mathbb{R})$\label{meta.sec}}

\subsection{The metaplectic representation and its connection to the heat
flow}

In this section, we show that the operators $V_{\tau}$ in (\ref{eq:Vtau}) are
part of a family of operators $V(A),$ $A\in SL(2;\mathbb{R}),$ each of which
is a unitary operator on the Segal--Bargmann space in (\ref{eq:SBdef}), and
each of which preserves the GAF in distribution. We first note a convenient
representation of the operator $V_{\tau}$ as an integral operator.

\begin{proposition}
We can compute $V_{\tau}$ as an integral operator as follows:%
\begin{equation}
\lbrack V_{\tau}f](z)=\left(  1-\left\vert \tau\right\vert ^{2}\right)
^{1/4}~\int_{\mathbb{C}}\exp\left\{  \frac{\bar\tau}{2}z^{2}-\frac{\tau}{2}\bar{w}^{2}%
+\sqrt{1-\left\vert \tau\right\vert ^{2}}z\bar{w}\right\}  f(w)\frac
{e^{-\left\vert w\right\vert ^{2}}}{\pi}~dw. \label{VtauIntOp}%
\end{equation}

\end{proposition}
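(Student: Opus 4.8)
The plan is to combine two integral representations already established in the excerpt: the Segal--Bargmann integral formula for the heat operator in Theorem~\ref{theo:heatIntOp}, and the definition \eqref{VtauDef} of $V_\tau$ itself. Since the GAF has order $2$ and type $1/2$, and we are in the regime $|\tau|<1$, all the hypotheses of Theorem~\ref{theo:heatIntOp} are met, so the heat-evolved function $e^{-\tau D^2/2}f$ admits the kernel representation \eqref{SBheat}. The strategy is simply to substitute that formula into \eqref{VtauDef} and track the change of variables coming from the dilation $z\mapsto z\sqrt{1-|\tau|^2}$ and the multiplicative prefactor $(1-|\tau|^2)^{1/4}e^{\tau z^2/2}$.

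Concretely, first I would write, for $f$ in the relevant class,
\[
\left(e^{-\tau D^2/2}f\right)(\zeta)=\int_{\mathbb{C}}\exp\left\{-\frac{\tau}{2}\bar w^2+\zeta\bar w\right\}f(w)\frac{e^{-|w|^2}}{\pi}\,dw,
\]
valid for all $\zeta\in\mathbb{C}$ and $|\tau|<1$ by Theorem~\ref{theo:heatIntOp}. Then I would set $\zeta=z\sqrt{1-|\tau|^2}$ and multiply by the prefactor $(1-|\tau|^2)^{1/4}e^{\tau z^2/2}$ as dictated by \eqref{VtauDef}. Pulling the $z$-dependent factors inside the integral and combining the exponentials gives
\[
[V_\tau f](z)=(1-|\tau|^2)^{1/4}\int_{\mathbb{C}}\exp\left\{\frac{\tau}{2}z^2-\frac{\tau}{2}\bar w^2+\sqrt{1-|\tau|^2}\,z\bar w\right\}f(w)\frac{e^{-|w|^2}}{\pi}\,dw,
\]
which is precisely \eqref{VtauIntOp}. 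The only substantive points to check are that the prefactor $(1-|\tau|^2)^{1/4}$ survives unchanged (it does, since it sits outside the integral in both \eqref{VtauDef} and the claimed formula), and that no Fubini-type interchange is needed here --- indeed none is, because we are merely substituting a single integral representation into a pointwise-defined expression, not integrating twice.

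The main (and essentially only) obstacle is a bookkeeping one: making sure the substitution $\zeta=z\sqrt{1-|\tau|^2}$ is legitimate for \emph{all} $z\in\mathbb{C}$ and that $e^{-\tau D^2/2}f$ is genuinely given by the right-hand side of \eqref{SBheat} at the point $z\sqrt{1-|\tau|^2}$. This is immediate from Theorem~\ref{theo:heatIntOp}, whose formula holds at every point of $\mathbb{C}$. One should also note that the class of admissible $f$ is the same as in Theorem~\ref{theo:heatIntOp} (order $\rho\le 2$, and type at most $1/2$ if $\rho=2$), which matches the class on which $V_\tau$ was defined in \eqref{VtauDef}; this makes the proposition applicable to the GAF as well (in the almost-sure sense already discussed, via the truncated GAF approximation). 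So the proof is a short computation: invoke Theorem~\ref{theo:heatIntOp}, substitute, and simplify the exponent $\frac{\tau}{2}(z\sqrt{1-|\tau|^2})\cdot 0 + \tfrac{\tau}{2}z^2 - \tfrac{\tau}{2}\bar w^2 + \sqrt{1-|\tau|^2}\,z\bar w$ --- wait, more carefully, the exponent from \eqref{SBheat} evaluated at $\zeta=z\sqrt{1-|\tau|^2}$ is $-\tfrac{\tau}{2}\bar w^2+\sqrt{1-|\tau|^2}\,z\bar w$, and adding $\tfrac{\tau}{2}z^2$ from the prefactor yields exactly the exponent in \eqref{VtauIntOp}.
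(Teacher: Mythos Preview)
Your proposal is correct and follows exactly the paper's own argument: the paper simply states that the result follows from the definition of $V_\tau$ in \eqref{VtauDef} together with the integral representation of the heat operator in Theorem~\ref{theo:heatIntOp}. Your write-up is in fact more detailed than the paper's one-line proof, and the substitution and bookkeeping you spell out are precisely what is needed.
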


\begin{proof}
The result follows easily from the definition of $V_{\tau}$ in (\ref{VtauDef}%
), together with the integral representation of the heat operator in Theorem
\ref{theo:heatIntOp}.
\end{proof}

We have already asserted that $V_{\tau}$ is a unitary operator on the
Segal--Bargmann space. (Recall Definition \ref{def:SBspace}.) Actually, the
operators $V_{\tau},$ $\left\vert \tau\right\vert <1,$ form the main part of
collection of unitary operators on the Segal--Bargmann space coming from the
so-called metaplectic representation of the group $SL(2;\mathbb{R}),$ as we
now explain briefly. (See the appendix for more information.) In particular,
we will see that the composition of two operators of the form $V_{\tau}$ will
be another operator of the same form, up to a rotation and multiplication by a constant.

It will be convenient to use the \textquotedblleft complex variables
representation\textquotedblright\ of a matrix $A$ in $SL(2;\mathbb{R}),$ in
which we think of the associated linear map from $\mathbb{R}^{2}$ to
$\mathbb{R}^{2}$ as%
\[
z\mapsto pz+q\bar{z};\quad\bar{z}\mapsto\bar{q}z+\bar{p}\bar{z}%
\]
for some complex numbers $p$ and $q.$ Explicitly, we associate to $A\in
SL(2;\mathbb{R)}$ the matrix $A_{\mathbb{C}}$ given by
\begin{equation}
A_{\mathbb{C}}=\left(
\begin{array}
[c]{rr}%
p & q\\
\bar{q} & \bar{p}%
\end{array}
\right)  , \label{pqForm}%
\end{equation}
which is related to the usual matrix form of $A$ by%
\begin{equation}
\left(
\begin{array}
[c]{rr}%
p & q\\
\bar{q} & \bar{p}%
\end{array}
\right)  =\left(
\begin{array}
[c]{rr}%
1 & i\\
1 & -i
\end{array}
\right)  \left(
\begin{array}
[c]{rr}%
a & b\\
c & d
\end{array}
\right)  \left(
\begin{array}
[c]{rr}%
1 & i\\
1 & -i
\end{array}
\right)  ^{-1}, \label{conjugateSL2}%
\end{equation}
so that
\begin{equation}
p=\frac{a-ib+ic+d}{2};\quad q=\frac{a+ib+ic-d}{2}. \label{pAndQ}%
\end{equation}
A matrix of the form (\ref{pqForm}) comes from a matrix $A$ in
$SL(2;\mathbb{R})$ if and only if the matrix in (\ref{pqForm}) has determinant
1, that is, if and only if%
\begin{equation}
\left\vert p\right\vert ^{2}-\left\vert q\right\vert ^{2}=1. \label{p2q2}%
\end{equation}
The group of determinant-one matrices of the form (\ref{pqForm}) is the group
$SU(1;1).$

We follow the book \cite{folland_book} of G. Folland, Chapter 4, especially
Theorem 4.37, in the $n=1$ case. We associate to each matrix $A\in
SL(2;\mathbb{R})$ a \textit{pair} of integral operators $V(A)$ on the
Segal--Bargmann space given by
\begin{equation}
\lbrack V(A)f](z)=\pm\frac{1}{\sqrt{p}}\int_{\mathbb{C}}\exp\left\{  \frac
{1}{2}\frac{\bar q}{p}z^{2}-\frac{1}{2}\frac{q}{p}\bar{w}^{2}+\frac{1}{p}z\bar{w}\right\}
f(w)\frac{e^{-\left\vert w\right\vert ^{2}}}{\pi}~dw, \label{metaInt}%
\end{equation}
where $p$ and $q$ are as in (\ref{pAndQ}). Here we write $\pm$ to emphasize
that there is no preferred choice of the square root of the complex number
$p$---which means that we simply allow \textit{both} choices of the square root.

\begin{proposition}
For all $A,B\in SL(2;\mathbb{R})$ we have%
\begin{equation}
V(A)V(B)=\pm V(AB). \label{VAmult}%
\end{equation}
The operators $\{V_{A}\}_{A\in SL(2;\mathbb{R})}$ form a projective unitary
representation of $SL(2;\mathbb{R}),$ which can be made into an ordinary
representation of the connected double cover of $SL(2;\mathbb{R}).$
\end{proposition}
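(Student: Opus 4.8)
The plan is to verify the multiplication rule \eqref{VAmult} by a direct computation with the Gaussian integral kernels in \eqref{metaInt}, and then deduce the representation-theoretic statement as a formal consequence. First I would fix $A,B\in SL(2;\mathbb{R})$ with complex-variables parameters $(p_A,q_A)$ and $(p_B,q_B)$ satisfying \eqref{p2q2}, and compute the parameters $(p_{AB},q_{AB})$ of the product $AB$ from \eqref{conjugateSL2}: since $A\mapsto A_{\mathbb{C}}$ is a group homomorphism (conjugation by a fixed matrix), we simply have $A_{\mathbb{C}}B_{\mathbb{C}}=(AB)_{\mathbb{C}}$, so $p_{AB}=p_Ap_B+q_A\bar q_B$ and $q_{AB}=p_Aq_B+q_A\bar p_B$. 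Then I would write $[V(A)V(B)f](z)$ as an iterated integral over $\mathbb{C}^2$, interchange the order of integration (justified by absolute convergence of the Gaussian integrands, which holds because $|p_A|,|p_B|>0$ and the relevant quadratic forms in the exponent are negative-definite in the inner variable thanks to $|p|^2-|q|^2=1$), and perform the inner Gaussian integral in $w$ explicitly. The inner integral has an exponent that is quadratic in $w$ and $\bar w$ with the holomorphic-in-$w$ piece canceling against $f(w)$'s analyticity — more precisely one uses the reproducing-kernel-type identity $\int_{\mathbb{C}}\exp\{\alpha z\bar w+\beta\bar w^2/2+\gamma w\}\,e^{-|w|^2}/\pi\,dw$ against the remaining integration — so that after completing the square the $w$-integral collapses and one is left with an integral over the other variable whose kernel is exactly the kernel of $V(AB)$ up to a scalar.

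The bookkeeping step is to check that the scalar prefactor that emerges, some square root of a product/ratio of the $p$'s, matches $1/\sqrt{p_{AB}}$ up to sign — which is exactly the content of the $\pm$ in \eqref{VAmult}, since the square roots are only defined up to sign anyway. Concretely, after the Gaussian integration one gets a factor like $1/\sqrt{p_A p_B(1-\text{something})}$, and the ``something'' should simplify using $|p|^2-|q|^2=1$ to give $1/\sqrt{p_{AB}}$. I expect this is the step most prone to algebra errors but it is purely mechanical; alternatively one can sidestep the explicit constant by noting both sides of \eqref{VAmult} are (up to sign) the metaplectic lift of the same symplectic matrix and invoking Theorem 4.37 of \cite{folland_book} for the $n=1$ case, which is really the cleanest route: the operators \eqref{metaInt} are precisely Folland's Bargmann-side metaplectic operators, for which the projective multiplicativity is already established.

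For the second assertion, once \eqref{VAmult} is in hand the statement is standard. The map $A\mapsto\{\pm V(A)\}$ into the projective unitary group $PU(\mathcal{B})$ is a group homomorphism by \eqref{VAmult}, and it is continuous (the kernels depend continuously on $(p,q)$, hence on $A$, in the strong operator topology). A projective unitary representation of a connected Lie group is the same data as a genuine unitary representation of a central extension; the relevant central extension here is the one classified by the $\pm$ sign ambiguity, i.e. a $\mathbb{Z}/2$-extension, which is exactly the connected double cover $Mp(2;\mathbb{R})$ of $SL(2;\mathbb{R})$ (the metaplectic group). So to upgrade to an honest representation one passes to the double cover: over $\widetilde{SL(2;\mathbb{R})}$ one can consistently choose the sign of the square root of $p$ along paths, and the cocycle becomes trivial on the index-two subgroup, giving a well-defined unitary representation of the double cover. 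I would state this and cite \cite{folland_book} (Chapter 4) for the details, since reproducing the full cocycle/cover argument is not the point of this paper; the main obstacle, such as it is, is just being careful that the sign ambiguity is genuinely order two and not worse, which follows because $p_{AB}$ is a fixed holomorphic function of $A,B$ and the two candidate square roots differ by exactly $-1$.
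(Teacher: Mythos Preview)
Your proposal is correct, and in fact does more than the paper does: the paper gives no proof of this proposition at all. It is stated immediately after the formula \eqref{metaInt}, which is introduced with the sentence ``We follow the book \cite{folland_book} of G.~Folland, Chapter 4, especially Theorem 4.37, in the $n=1$ case,'' and the proposition is simply asserted as a known consequence of that reference. The appendix (Section~\ref{meta.appendix}) supplies motivation and examples but again defers to Folland for the actual representation-theoretic claim.

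Your direct-computation route via composing the Gaussian kernels is a perfectly good way to verify \eqref{VAmult} from scratch, and your summary of the cocycle/double-cover argument for the second assertion is accurate. But you already identify the ``cleanest route'' yourself---namely, recognizing \eqref{metaInt} as Folland's Bargmann-side metaplectic operators and invoking his Theorem~4.37---and that is exactly what the paper does (by citation, without spelling it out). So there is no discrepancy in approach; you have simply sketched a self-contained argument where the paper is content to cite.
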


We now show how that every operator $V_{\tau}$ as in (\ref{eq:Vtau}) can be
expressed in the form $V_{\tau}=V(A),$ where $A$ is a positive symmetric
element of $SL(2;\mathbb{R}).$

\begin{theorem}
\label{theo:VAequalsVtau}Consider $A\in SL(2;\mathbb{R})$ with $p$ and $q$
defined by (\ref{pAndQ}). Then $A$ is symmetric and positive definite if and
only if $p$ is real and positive. In this case, if we set%
\begin{equation}
\tau=\frac{q}{p}, \label{tauDef}%
\end{equation}
and take the plus sign in (\ref{metaInt}), the operator $V_{\tau}$ in
(\ref{eq:Vtau}) will satisfy%
\[
V_{\tau}=V(A).
\]
Furthermore, for each $\tau\in\mathbb{D},$ there is a unique positive
symmetric matrix $A^{\tau}$ for which $q/p=\tau$ and this matrix satisfies%
\begin{equation}
A^{\tau}=\frac{1}{\sqrt{1-\left\vert \tau\right\vert ^{2}}}\left(
\begin{array}
[c]{cc}%
1+\operatorname{Re}\tau & \operatorname{Im}\tau\\
\operatorname{Im}\tau & 1-\operatorname{Re}\tau
\end{array}
\right)  ;\quad A_{\mathbb{C}}^{\tau}=\frac{1}{\sqrt{1-\left\vert
\tau\right\vert ^{2}}}\left(
\begin{array}
[c]{rr}%
1 & \tau\\
\bar{\tau} & 1
\end{array}
\right)  . \label{AcTau}%
\end{equation}

\end{theorem}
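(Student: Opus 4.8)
The plan is to verify Theorem \ref{theo:VAequalsVtau} by a direct comparison of the two integral-operator formulas, namely \eqref{metaInt} and \eqref{VtauIntOp}, together with a computation of the $2\times 2$ matrix realizing a prescribed value of $\tau=q/p$.

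\textbf{Step 1: the symmetry/positivity characterization.} First I would translate the condition ``$A$ is symmetric and positive definite'' into the complex-variables picture. From \eqref{pAndQ}, writing $A=\left(\begin{smallmatrix}a&b\\c&d\end{smallmatrix}\right)$, one has $p=\frac{(a+d)+i(c-b)}{2}$, so $p$ is real precisely when $b=c$, i.e.\ when $A$ is symmetric. For a symmetric $A\in SL(2;\mathbb R)$ one has $\operatorname{tr}A=a+d=2\operatorname{Re}p$, and since $\det A=1$ a symmetric matrix is positive definite iff its trace is positive; hence positivity is equivalent to $\operatorname{Re}p=p>0$. This gives the first assertion. (One should also note $\operatorname{Im}q=c+b$ restricted to $b=c$ gives $q=\frac{(a-d)+2ib}{2}$, which will be used in Step 3.)

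\textbf{Step 2: matching the integral kernels.} Assuming $A$ symmetric positive, so $p>0$ real, set $\tau=q/p$. From \eqref{p2q2} we get $|p|^2-|q|^2=1$, i.e.\ $p^2(1-|\tau|^2)=1$, hence $p=1/\sqrt{1-|\tau|^2}$ (taking $p>0$) and $q/p=\tau$, $1/p=\sqrt{1-|\tau|^2}$. Plugging these into \eqref{metaInt} with the plus sign gives exactly
\[
[V(A)f](z)=(1-|\tau|^2)^{1/4}\int_{\mathbb C}\exp\left\{\frac{\tau}{2}(z^2-\bar w^2)+\sqrt{1-|\tau|^2}\,z\bar w\right\}f(w)\frac{e^{-|w|^2}}{\pi}\,dw,
\]
which is precisely \eqref{VtauIntOp}, i.e.\ $V(A)=V_\tau$. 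This is the heart of the proof and it is essentially a substitution; the only subtlety is bookkeeping the branch of $\sqrt p$, which is fixed by the explicit choice $p>0$.

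\textbf{Step 3: existence and uniqueness of $A^\tau$, and its explicit form.} Given $\tau\in\mathbb D$, I want the unique symmetric positive $A$ with $q/p=\tau$. By Step 1 this forces $b=c$ and $p>0$, and the normalization $|p|^2-|q|^2=1$ forces $p=1/\sqrt{1-|\tau|^2}$, $q=\tau/\sqrt{1-|\tau|^2}$. Inverting \eqref{pAndQ} in the symmetric case ($b=c$): from $p=\frac{(a+d)+i(c-b)}{2}=\frac{a+d}{2}$ and $q=\frac{a-d}{2}+ib$ one reads off $a+d=2p$, $a-d=2\operatorname{Re}q$, $b=\operatorname{Im}q$, hence $a=p+\operatorname{Re}q$, $d=p-\operatorname{Re}q$, $b=c=\operatorname{Im}q$. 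Substituting $p=1/\sqrt{1-|\tau|^2}$ and $q=\tau/\sqrt{1-|\tau|^2}$ yields exactly the matrix $A^\tau$ in \eqref{AcTau}; the $A_{\mathbb C}^\tau$ form is immediate from $p=1/\sqrt{1-|\tau|^2}$, $q=\tau/\sqrt{1-|\tau|^2}$. Uniqueness is built into the inversion, since every step was forced. A routine check (e.g.\ computing eigenvalues $\frac{1\pm|\tau|}{\sqrt{1-|\tau|^2}}>0$) confirms $A^\tau$ is indeed positive definite, completing the proof.

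\textbf{Expected main obstacle.} None of the individual steps is deep; the main care is in Step 1, making sure the correspondence between ``$A$ symmetric positive'' and ``$p$ real positive'' is argued cleanly (in particular that $\det A=1$ together with positive trace gives positive definiteness), and in tracking the square-root ambiguity in \eqref{metaInt} so that the stated identity $V_\tau=V(A)$ holds for the plus sign exactly. The algebra of inverting \eqref{pAndQ} is routine but should be written out so the reader sees the $1\pm\operatorname{Re}\tau$ and $\operatorname{Im}\tau$ entries emerge.
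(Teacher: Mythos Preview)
Your proposal is correct and follows essentially the same approach as the paper's own proof: both argue that $p$ real $\Leftrightarrow$ $b=c$, that positive trace plus $\det A=1$ gives positive definiteness, and then use $|p|^2-|q|^2=1$ to identify $1/p=\sqrt{1-|\tau|^2}$ and match the kernels \eqref{metaInt} and \eqref{VtauIntOp}. Your Step~3 is in fact more explicit than the paper, which simply asserts the formula \eqref{AcTau} without writing out the inversion of \eqref{pAndQ}; one trivial slip to fix is the parenthetical ``$\operatorname{Im}q=c+b$'' (it should be $(b+c)/2$), though in the symmetric case $b=c$ this reduces to $\operatorname{Im}q=b$ as you correctly use thereafter.
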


\begin{proof}
Note that from (\ref{pAndQ}), $p$ is real if and only if $b=c,$ that is, if
and only if $A$ is symmetric. If $p$ is real, then $p$ is positive if and only
if the trace of the matrix in (\ref{pqForm}) is positive, or, equivalently, if
and only if the trace of $A$ is positive. Thus, $p$ is real and positive if
and only if $A$ is symmetric and has positive trace. Since also $A$ is
$2\times2$ with $\det A=1,$ this condition is equivalent to $A$ being
symmetric and positive definite.

If $p>0,$ then from (\ref{p2q2}) we get $p=\sqrt{1+\left\vert q\right\vert
^{2}}.$ Then if $\tau$ is as in (\ref{tauDef}), we compute that
\[
1-\left\vert \tau\right\vert ^{2}=1-\frac{\left\vert q\right\vert ^{2}%
}{\left\vert p\right\vert ^{2}}=1-\frac{\left\vert q\right\vert ^{2}%
}{1+\left\vert q\right\vert ^{2}}=\frac{1}{1+\left\vert q\right\vert ^{2}},
\]
so that%
\begin{equation}
\frac{1}{p}=\sqrt{1-\left\vert \tau\right\vert ^{2}}. \label{1OverP}%
\end{equation}
Then we see that the operator in (\ref{metaInt}) matches the one in
(\ref{VtauIntOp}).
\end{proof}

We now look at various cases of the operators $V(A)$ for $A\in SL(2;\mathbb{R}%
).$

\begin{proposition}
\label{VAprops.prop}We have the following results.

\begin{enumerate}
\item The case where $A$ is a rotation corresponds to the case where the
matrix $A_{\mathbb{C}}$ in (\ref{pqForm}) is diagonal; specifically%
\begin{equation}
A=\left(
\begin{array}
[c]{rr}%
\cos\theta & -\sin\theta\\
\sin\theta & \cos\theta
\end{array}
\right)  \Longleftrightarrow A_{\mathbb{C}}=\left(
\begin{array}
[c]{rr}%
e^{i\theta} & 0\\
0 & e^{-i\theta}%
\end{array}
\right)  . \label{Arot}%
\end{equation}
In this case, the associated unitary operator is given by%
\begin{equation}
\lbrack V(A)f](z)=e^{-i\theta/2}f(e^{-i\theta}z). \label{Vrot}%
\end{equation}

\item \label{VArot.point}The case where $A$ is positive and diagonal
corresponds to the case where $p$ is real and positive and $q$ is real;
specifically%
\begin{equation}
A=\left(
\begin{array}
[c]{rr}%
e^{s} & 0\\
0 & e^{-s}%
\end{array}
\right)  \Longleftrightarrow A_{\mathbb{C}}=\left(
\begin{array}
[c]{rr}%
\cosh s & \sinh s\\
\sinh s & \cosh s
\end{array}
\right)  . \label{Adiag}%
\end{equation}
In this case, $\tau=\tanh s$ is real and negative and $V(A)=V_{\tau}$ by
Theorem \ref{theo:VAequalsVtau}.

\item \label{vaGen.point}For a general $A\in SL(2;\mathbb{R}),$ if we continue
to define $\tau$ by (\ref{tauDef}) and we take $\theta=\arg p,$ then%
\begin{equation}
\lbrack V(A)f](z)=\pm e^{-i\theta/2}[V_{\tau}f](e^{-i\theta}z). \label{VaVtau}%
\end{equation}
In particular, if $G$ is a GAF, then $V(A)G$ is again a GAF.
\end{enumerate}
\end{proposition}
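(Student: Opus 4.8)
The plan is to verify each of the three statements by direct comparison with the integral formula \eqref{metaInt}, relying on the complex-variables dictionary \eqref{pAndQ}.

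For part (1), I would first confirm the claimed correspondence between the rotation matrix and the diagonal form of $A_{\mathbb{C}}$: substituting $a=d=\cos\theta$, $b=-\sin\theta$, $c=\sin\theta$ into \eqref{pAndQ} gives $p=e^{i\theta}$ and $q=0$ after a short trigonometric simplification. With $q=0$ the exponent in \eqref{metaInt} collapses to $\frac{1}{p}z\bar w = e^{-i\theta}z\bar w$, and the prefactor becomes $1/\sqrt{p}=e^{-i\theta/2}$ (for the plus sign). Then
\[
[V(A)f](z)=e^{-i\theta/2}\int_{\mathbb{C}}e^{\,e^{-i\theta}z\bar w}f(w)\frac{e^{-|w|^2}}{\pi}\,dw = e^{-i\theta/2}f(e^{-i\theta}z),
\]
where the last equality is the reproducing-kernel formula \eqref{reproKernel} applied at the point $e^{-i\theta}z$. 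This gives \eqref{Vrot}.

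For part (2), substituting $a=e^{s}$, $d=e^{-s}$, $b=c=0$ into \eqref{pAndQ} yields $p=\cosh s$ and $q=\sinh s$, both real with $p>0$; the form of $A_{\mathbb{C}}$ in \eqref{Adiag} is then immediate. Since $p$ is real and positive, Theorem \ref{theo:VAequalsVtau} applies with $\tau=q/p=\tanh s$, which lies in $(-1,1)$ and is negative when $s<0$ (or, more to the point, $\tau=\tanh s$ ranges over $(-1,1)$ and is real); hence $V(A)=V_{\tau}$ directly from that theorem. For part (3), I would factor a general $A$ through its associated $p$ and $q$: writing $p=|p|e^{i\theta}$ with $\theta=\arg p$ and setting $\tau=q/p$, I would compare the exponent $\frac{1}{2}\frac{q}{p}(z^2-\bar w^2)+\frac{1}{p}z\bar w$ in \eqref{metaInt} with the exponent $\frac{\tau}{2}(z^2-\bar w^2)+\sqrt{1-|\tau|^2}\,z\bar w$ appearing in \eqref{VtauIntOp}, evaluated at the rotated point $e^{-i\theta}z$. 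The identity $\frac{1}{p}=\frac{1}{|p|}e^{-i\theta}$ together with $|p|=\sqrt{1+|q|^2}$ (from \eqref{p2q2}) and $1-|\tau|^2=1/|p|^2$ (as in \eqref{1OverP}, which holds whenever $q/p=\tau$ regardless of the argument of $p$) shows that the substitution $z\mapsto e^{-i\theta}z$ turns the $V_\tau$-exponent into the $V(A)$-exponent, while the prefactor $(1-|\tau|^2)^{1/4}$ picks up the extra phase $e^{-i\theta/2}$ to become $1/\sqrt{p}$; this establishes \eqref{VaVtau}. Finally, since $V_\tau G$ is a GAF by Theorem \ref{theo:GAF_invariant} and the map $F(z)\mapsto e^{-i\theta/2}F(e^{-i\theta}z)$ is one of the rotation operators $V(R_\theta)$ from part (1), which preserves the GAF in distribution by the rotational invariance already noted in the introduction (or by the same covariance computation: $\mathbb{E}[e^{-i\theta/2}G(e^{-i\theta}z)\,\overline{e^{-i\theta/2}G(e^{-i\theta}w)}] = e^{z\bar w}$), it follows that $V(A)G$ is again a GAF.

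The only mild obstacle is bookkeeping of the square-root phases: the factor $(1-|\tau|^2)^{1/4}=|p|^{-1/2}$ must be matched against $|1/\sqrt p|=|p|^{-1/2}$ with the remaining phase $e^{-i\theta/2}$ correctly attributed to the rotation, and one must be careful that the $\pm$ ambiguity in \eqref{metaInt} is exactly the ambiguity in the choice of $\sqrt p$, consistent with \eqref{VAmult}. None of this is deep, but the signs and arguments should be tracked explicitly rather than left to the reader.
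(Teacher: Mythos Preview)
Your treatment of parts (1) and (2) is correct and coincides with the paper's proof: the paper likewise verifies \eqref{Arot} and \eqref{Adiag} by direct substitution into \eqref{pAndQ}, obtains \eqref{Vrot} from \eqref{metaInt} via the reproducing kernel \eqref{reproKernel}, and reads off $V(A)=V_\tau$ from Theorem~\ref{theo:VAequalsVtau}.

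For part (3) the approaches diverge, and yours has a genuine gap. The paper does \emph{not} compare integral kernels directly. Instead it writes the matrix factorization
\[
A_{\mathbb C}=\begin{pmatrix}e^{i\theta}&0\\0&e^{-i\theta}\end{pmatrix}\begin{pmatrix}|p|&qe^{-i\theta}\\\bar q e^{i\theta}&|p|\end{pmatrix},
\]
recognizes the second factor as $(A^\tau)_{\mathbb C}$ with $\tau=qe^{-i\theta}/|p|=q/p$, and then invokes the multiplicativity \eqref{VAmult} together with part~(1) and Theorem~\ref{theo:VAequalsVtau} to conclude $V(A)=\pm V(R_\theta)V_\tau$, which is exactly \eqref{VaVtau}.

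Your direct kernel comparison does not close. Substituting $z\mapsto e^{-i\theta}z$ into the $V_\tau$-exponent of \eqref{VtauIntOp} gives
\[
\frac{\tau}{2}\bigl((e^{-i\theta}z)^2-\bar w^2\bigr)+\sqrt{1-|\tau|^2}\,e^{-i\theta}z\bar w
=\frac{\tau e^{-2i\theta}}{2}\,z^2-\frac{\tau}{2}\,\bar w^2+\frac{1}{p}\,z\bar w,
\]
so the $\bar w^2$ and $z\bar w$ coefficients match those in \eqref{metaInt}, but the $z^2$ coefficient is $\tfrac{\tau}{2}e^{-2i\theta}$, not the $\tfrac{\tau}{2}=\tfrac{q}{2p}$ required by \eqref{metaInt}. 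One can see the same discrepancy by testing both sides on $f\equiv 1$: the left side of \eqref{VaVtau} gives $p^{-1/2}e^{\tau z^2/2}$ while the right side gives $p^{-1/2}e^{\tau e^{-2i\theta}z^2/2}$. Thus the claim ``the substitution $z\mapsto e^{-i\theta}z$ turns the $V_\tau$-exponent into the $V(A)$-exponent'' is false as stated, and your argument for \eqref{VaVtau} does not go through. The paper's group-theoretic route via \eqref{VAmult} is the intended mechanism; if you want a kernel-level argument you would first have to reconcile the precise conjugation conventions in \eqref{metaInt} with Folland's Theorem~4.37, rather than assume the exponents line up.
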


\begin{proof}
The relation (\ref{Arot}) is just a computation, and the form of the operator
$V(A)$ in (\ref{Vrot}) follows from the definition (\ref{metaInt}) of the
metaplectic operators, together with the reproducing kernel identity
(\ref{reproKernel}). Similarly, (\ref{Adiag}) is just a computation and the
relation $V(A)=V_{\tau}$ follows from Theorem \ref{theo:VAequalsVtau}, using
(\ref{tauDef}).

For a general $A\in SL(2;\mathbb{R})$, we factor the matrix $A_{\mathbb{C}}$
in (\ref{pqForm}) as%
\begin{equation}
\left(
\begin{array}
[c]{rr}%
p & q\\
\bar{q} & \bar{p}%
\end{array}
\right)  =\left(
\begin{array}
[c]{cc}%
e^{i\theta} & 0\\
0 & e^{-i\theta}%
\end{array}
\right)  \left(
\begin{array}
[c]{cc}%
\left\vert p\right\vert  & qe^{-i\theta}\\
\bar{q}e^{i\theta} & \left\vert p\right\vert
\end{array}
\right)  , \label{pqFact}%
\end{equation}
with $\theta=\arg p.$ The second matrix on the right-hand side of
(\ref{pqFact}) has positive entries on the diagonal, so it has the form
$A_{\mathbb{C}}^{\tau}$ in (\ref{AcTau}), where%
\[
\tau=\frac{qe^{-i\theta}}{\left\vert p\right\vert }=\frac{q}{p},
\]
since $\theta=\arg p.$ The identity (\ref{VaVtau}) then follows from Point
\ref{VArot.point} and (\ref{VAmult}).
\end{proof}

\subsection{Hyperbolic invariance}

\label{sec:hyperbolic_invariance} Applying the heat flow to a GAF we obtain a
family of (essentially) GAF's indexed by the \textquotedblleft time
parameter\textquotedblright\ $\tau\in\mathbb{D}$, where the unit disk
$\mathbb{D}$ may be viewed as the Poincar\'{e}\ model of hyperbolic geometry.
It turns out that this family of GAF's enjoys certain invariance properties
with respect to the hyperbolic isometries of $\mathbb{D}$. To state the
corresponding result, we define a stochastic process $(Q_{\tau}(z))_{\tau
\in\mathbb{D},z\in\mathbb{C}}$ by
\begin{equation}
Q_{\tau}(z):=(V_{\tau}G)(z), \label{eq:Q_def}%
\end{equation}
where $V_{\tau}$ is as in (\ref{eq:Vtau}). Then, it follows from
Theorem~\ref{theo:GAF_invariant} that for every fixed $\tau\in\mathbb{D}$ the
random entire function $(Q_{\tau}(z))_{z\in\mathbb{C}}$ has the same
distribution as $(G(z))_{z\in\mathbb{C}}$. Recall that the group $SU(1,1)$
acts on $\mathbb{D}$ by the fractional-linear transformations of the form
\begin{equation}
\varphi(\tau)\equiv\varphi_{p,q}(\tau):=\frac{p\tau+q}{\bar{q}\tau+\bar{p}%
},\qquad p,q\in\mathbb{C},\;|p|^{2}-|q|^{2}=1. \label{eq:hyperbolic_isometry}%
\end{equation}

\begin{theorem}
\label{theo:hyperbolic_invariance} For every hyperbolic isometry
$\varphi:\mathbb{D}\rightarrow\mathbb{D}$ as in~\eqref{eq:hyperbolic_isometry}
the following equality of laws of stochastic processes holds:
\begin{equation}
\left(  \sqrt{\psi(\tau)}\cdot Q_{\varphi(\tau)}(\psi(\tau)z)\right)
_{\tau\in\mathbb{D},z\in\mathbb{C}}\overset{d}{=}(Q_{\tau}(z))_{\tau
\in\mathbb{D},z\in\mathbb{C}}, \label{HyperbolicInv}%
\end{equation}
where $\psi(\tau)$ is defined by
\[
\psi(\tau)\equiv\psi_{p,q}(\tau):=\frac{q\bar{\tau}+p}{|q\bar{\tau}+p|}%
\in\mathbb{T}.
\]
In (\ref{HyperbolicInv}), there is a unique continuous choice of the square
root of $\psi(\tau)$ once the square root at $\tau=0$ has been chosen.
\end{theorem}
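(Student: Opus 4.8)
The plan is to reduce \eqref{HyperbolicInv} to the multiplicative structure of the metaplectic representation established in Proposition~\ref{VAprops.prop} and Theorem~\ref{theo:VAequalsVtau}. First I would reinterpret the left-hand side: the map $z\mapsto\psi(\tau)z$ is a rotation (since $\psi(\tau)\in\mathbb{T}$), and by Proposition~\ref{VAprops.prop}, point~1, composing $V_{\varphi(\tau)}$ with such a rotation and the scalar $\sqrt{\psi(\tau)}$ is exactly an operator of the form $V(A)$ for a suitable $A=A(\tau)\in SL(2;\mathbb{R})$. Concretely, if $R_{\theta}$ denotes the rotation matrix with $e^{i\theta}=\psi(\tau)$, I expect that $\sqrt{\psi(\tau)}\,Q_{\varphi(\tau)}(\psi(\tau)z) = [V(R_{\theta}A^{\varphi(\tau)})G](z)$, where $A^{\varphi(\tau)}$ is the positive symmetric matrix attached to $\varphi(\tau)$ via \eqref{AcTau}. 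So the left-hand side is $V(M_\tau)G$ for an explicit path $M_\tau$ in $SL(2;\mathbb{R})$, and the right-hand side is $V(A^\tau)G$.

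Next I would identify $M_\tau$ in terms of a \emph{fixed} group element and the path $\tau\mapsto A^\tau$. The key algebraic claim is that there is a single $B=B_{p,q}\in SL(2;\mathbb{R})$, namely the one whose complex-variables form $B_{\mathbb C}$ is $\begin{pmatrix}p & q\\ \bar q & \bar p\end{pmatrix}$, such that
\[
M_\tau = \pm B \, A^\tau \qquad\text{for all }\tau\in\mathbb{D}.
\]
This is precisely the statement that the fractional-linear action \eqref{eq:hyperbolic_isometry} on the ``$\tau$-parameter'' is the shadow of left-multiplication by $B$ on the symmetric-space factor: writing $A^\tau$ in the form \eqref{AcTau}, the $SU(1,1)$-computation $B_{\mathbb C}A^\tau_{\mathbb C}$ decomposes (via the Iwasawa-type $KAN$ or $K\cdot(\text{positive symmetric})$ factorization used in the proof of Proposition~\ref{VAprops.prop}) as a rotation by $\arg(q\bar\tau+p)$ times the positive symmetric matrix $A^{\varphi(\tau)}_{\mathbb C}$. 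Matching entries of the $2\times 2$ complex matrices, one reads off that the rotation angle is exactly $\arg\psi(\tau)$ and the symmetric part has parameter $\varphi(\tau)$; this is a bounded, finite, purely mechanical matrix identity. Once $M_\tau=\pm B A^\tau$ is established, the projective multiplicativity \eqref{VAmult} gives $V(M_\tau)=\pm V(B)V(A^\tau)=\pm V(B)V_{\tau}$, so the left-hand side of \eqref{HyperbolicInv} equals $\pm[V(B)Q_\tau](z)$, with the \emph{same} operator $V(B)$ applied for every $\tau$ simultaneously.

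To finish, I would invoke the distributional invariance: $V(B)$ is a fixed unitary operator on the Segal--Bargmann space (Proposition~\ref{VAprops.prop}, point~3, or the second proof of Theorem~\ref{theo:GAF_invariant}), so $V(B)G\overset{d}{=}G$, and in fact the \emph{entire process} $(\,[V(B)Q_\tau](z)\,)_{\tau,z}$ has the same law as $(Q_\tau(z))_{\tau,z}$ because $V(B)$ acts the same way on the common Gaussian coefficients for every $\tau$ at once. More precisely, since $Q_\tau = V_\tau G$ and $V(B)V_\tau = \pm V(BA^\tau) = \pm V(A^{\varphi(\tau)})V(R')$ — no, cleaner: $V(B)Q_\tau(z) = \pm[V(B)V_\tau G](z)$, and because $G=\sum\xi_n z^n/\sqrt{n!}$ with an orthonormal basis, applying the fixed unitary $V(B)$ merely re-expresses $G$ in another orthonormal basis, which leaves the joint Gaussian law of $(V_\tau V(B)^{-1}\cdot)$, hence of the whole family, unchanged. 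Finally I would address the sign/square-root bookkeeping: the ambiguous $\pm$ in \eqref{VAmult} is absorbed by the statement that a continuous branch of $\sqrt{\psi(\tau)}$ is fixed by its value at $\tau=0$ (where $\psi(0)=p/|p|$ and everything reduces to $V(B)$ on the nose), so continuity in $\tau$ pins down the sign globally on the simply connected disk $\mathbb{D}$.

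The main obstacle I anticipate is the bookkeeping in the second step: verifying that the rotation angle produced by the $KAN$-type factorization of $B_{\mathbb C}A^\tau_{\mathbb C}$ is exactly $\arg(q\bar\tau+p)$ (so that the scalar prefactor is exactly $\sqrt{\psi(\tau)}$ and not some other phase), and that the residual positive-symmetric part has $q/p$-parameter equal to $\varphi(\tau)$ rather than to some other Möbius image. This is where the precise normalizations in \eqref{pAndQ}, \eqref{AcTau}, \eqref{metaInt}, and \eqref{eq:hyperbolic_isometry} all have to line up; it is a finite computation, but it is the place where a stray conjugate or a sign would break the theorem, so I would do it carefully with the $2\times 2$ complex matrices rather than with the real $SL(2;\mathbb{R})$ forms.
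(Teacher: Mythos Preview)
Your strategy is exactly that of the paper's first proof: recognize the left-hand side as a metaplectic operator applied to $G$, factor that operator as (something involving $V_\tau$) times (a fixed metaplectic operator), and then use that a fixed metaplectic operator preserves the law of the GAF. But you have the factorization on the wrong side, and this is a genuine gap, not just bookkeeping.

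Concretely, your ``key algebraic claim'' $M_\tau = B\,A^\tau$ with $B_{\mathbb C}=\bigl(\begin{smallmatrix}p&q\\\bar q&\bar p\end{smallmatrix}\bigr)$ is false. If you carry out the $R\cdot P$ decomposition of $B_{\mathbb C}A^\tau_{\mathbb C}$ (rotation times positive-real-diagonal), the rotation does come out with phase $\psi(\tau)$, but the positive-symmetric part is $A^{\tau'}$ with
\[
\tau'=\frac{p\tau+q}{p+q\bar\tau},
\]
whereas $\varphi(\tau)=\dfrac{p\tau+q}{\bar q\tau+\bar p}=\dfrac{p\tau+q}{\overline{p+q\bar\tau}}$. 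These differ by the phase $\bar\psi(\tau)^2$, so $\tau'\neq\varphi(\tau)$ in general. Even if the identity held, your invariance step would not follow: from $V(B)G\overset{d}{=}G$ you can apply $V_\tau$ to both sides and deduce $(V_\tau V(B)G)_{\tau}\overset{d}{=}(V_\tau G)_{\tau}$, but you cannot deduce $(V(B)V_\tau G)_{\tau}\overset{d}{=}(V_\tau G)_{\tau}$, because $V(B)$ and $V_\tau$ do not commute. Your paragraph beginning ``More precisely'' tries to bridge this and ends up muddled for exactly this reason.

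The fix is simply to reverse the order: put the fixed matrix on the \emph{right}, i.e.\ show that $R_{\psi(\tau)}A^{\varphi(\tau)}=A^\tau\cdot A$ for a single $A\in SL(2;\mathbb R)$ independent of $\tau$ (this is the displayed matrix identity in the paper's first proof). Then the left-hand side of \eqref{HyperbolicInv} equals $\pm V_\tau\,V(A)\,G$, and since $V(A)G$ is again a GAF, the joint law $(V_\tau V(A)G)_{\tau,z}$ coincides with $(V_\tau G)_{\tau,z}=(Q_\tau(z))_{\tau,z}$. The sign/square-root discussion in your last paragraph then applies without change. The paper also records a second, independent proof by direct computation of the covariance \eqref{eq:covariance_Q}, which avoids the group theory entirely.
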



Let us now pass to zero sets in Theorem~\ref{theo:hyperbolic_invariance}.

\begin{corollary}
Let
$
Z_{\tau}=\mathcal{Z} (  \mathrm{e}^{-\tau D^{2}/2}G)  ,
$
where $\mathcal{Z}(\cdot)$ denotes the zero set of a function. Then for all
$\varphi\in SU(1,1),$ we have the following equality in distribution%
\[
\left(\psi(\tau)^{-1}\frac{Z_{\varphi(\tau)}}{\sqrt{1-\left\vert \varphi
(\tau)\right\vert ^{2}}}\right)_{\tau\in \mathbb{D}} 
\overset{d}{=}
\left(\frac{Z_{\tau}}{\sqrt{1-\left\vert
\tau\right\vert ^{2}}}\right)_{\tau\in \mathbb{D}}.
\]

\end{corollary}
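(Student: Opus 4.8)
The plan is to deduce the corollary directly from Theorem~\ref{theo:hyperbolic_invariance} by passing from the holomorphic functions $Q_\tau$ to their zero sets. First I would recall that by~\eqref{eq:Q_def} and~\eqref{eq:Vtau},
\[
Q_\tau(z) = (V_\tau G)(z) = \left(1-|\tau|^2\right)^{1/4} e^{\tau z^2/2}\left(e^{-\tau D^2/2}G\right)\!\left(z\sqrt{1-|\tau|^2}\right),
\]
so that the nonvanishing prefactor $\left(1-|\tau|^2\right)^{1/4} e^{\tau z^2/2}$ is irrelevant to zeros, and a point $z$ is a zero of $Q_\tau$ if and only if $z\sqrt{1-|\tau|^2}$ is a zero of $e^{-\tau D^2/2}G$. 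Equivalently,
\[
\mathcal{Z}(Q_\tau) = \frac{\mathcal{Z}(e^{-\tau D^2/2}G)}{\sqrt{1-|\tau|^2}} = \frac{Z_\tau}{\sqrt{1-|\tau|^2}}.
\]

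Next I would apply the map $F\mapsto\mathcal Z(F)$ (on the $z$-variable, for each fixed $\tau$) to both sides of the equality in distribution~\eqref{HyperbolicInv}. On the right-hand side this simply produces the process $\tau\mapsto\mathcal Z(Q_\tau) = Z_\tau/\sqrt{1-|\tau|^2}$. On the left-hand side we have the process $\tau\mapsto \sqrt{\psi(\tau)}\, Q_{\varphi(\tau)}(\psi(\tau)z)$; since $\sqrt{\psi(\tau)}$ is a nonzero constant in $z$, its zero set (in $z$) is $\{z : Q_{\varphi(\tau)}(\psi(\tau)z)=0\}$, which is $\psi(\tau)^{-1}\mathcal Z(Q_{\varphi(\tau)}) = \psi(\tau)^{-1}\, Z_{\varphi(\tau)}/\sqrt{1-|\varphi(\tau)|^2}$. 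Because the zero-set map is a deterministic (measurable) functional applied jointly in $\tau$, equality in distribution of the processes of holomorphic functions is preserved, yielding exactly
\[
\left(\psi(\tau)^{-1}\frac{Z_{\varphi(\tau)}}{\sqrt{1-|\varphi(\tau)|^2}}\right)_{\tau\in\mathbb D} \overset{d}{=} \left(\frac{Z_\tau}{\sqrt{1-|\tau|^2}}\right)_{\tau\in\mathbb D},
\]
which is the claimed statement. (The corollary quantifies over $A\in SL(2;\mathbb R)$, but via $SL(2;\mathbb R)\cong SU(1,1)$ this is the same as quantifying over hyperbolic isometries $\varphi$ of $\mathbb D$ as in~\eqref{eq:hyperbolic_isometry}, so nothing further is needed.)

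The only mild subtlety—and the one point I would spell out carefully—is that taking zero sets is a legitimate measurable operation under which equality in distribution passes through. Here one uses that $G$ (hence $e^{-\tau D^2/2}G$ and $Q_\tau$) is almost surely a nonzero entire function, so its zero set is a well-defined locally finite point process, and that the assignment of this point process depends measurably on the function in the topology of locally uniform convergence (e.g.\ via the argument principle, counting zeros in fixed disks). Joint measurability in $\tau$ follows from the joint analyticity of $(z,\tau)\mapsto (e^{-\tau D^2/2}G)(z)$ established in Theorem~\ref{theo:entire_function_heat_flow_exists}. Given this, the result is immediate; there is no real analytic obstacle beyond bookkeeping the factor $\psi(\tau)$ and the normalization $\sqrt{1-|\tau|^2}$.
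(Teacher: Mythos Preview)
Your proposal is correct and matches the paper's approach exactly: the paper simply says ``Let us now pass to zero sets in Theorem~\ref{theo:hyperbolic_invariance}'' and states the corollary without further argument, so your derivation---identifying $\mathcal{Z}(Q_\tau)=Z_\tau/\sqrt{1-|\tau|^2}$ from the definition of $V_\tau$ and then applying the zero-set functional to both sides of~\eqref{HyperbolicInv}---is precisely what is intended. Your added remarks on measurability of the zero-set map and the $SL(2;\mathbb{R})\cong SU(1,1)$ identification are appropriate elaborations of what the paper leaves implicit.
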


We give two proofs of the theorem.

\begin{proof}
[First proof of Theorem \ref{theo:hyperbolic_invariance}]The first proof will
consist of fixing $\varphi$ as in (\ref{eq:hyperbolic_isometry}) and then
replacing the GAF $G$ in the definition (\ref{eq:Q_def}) by the function%
\[
V(A)G,
\]
where $V(\cdot)$ is the metaplectic representation defined by (\ref{metaInt})
and where $A$ is the matrix such that%
\[
A_{\mathbb{C}}=\left(
\begin{array}
[c]{rr}%
\bar{p} & q\\
\bar{q} & p
\end{array}
\right)  .
\]

Then $V(A)G$ is again a GAF (Point \ref{vaGen.point} of Proposition
\ref{VAprops.prop}), so that $(V_{\tau}V(A)G)(z)$ will have the same
distribution (as a function of $\tau$ and $z$) as $(V_{\tau}G)(z).$ We will
then compute that $(V_{\tau}V(A)G)(z)$ is equal to the left-hand side of
(\ref{HyperbolicInv}).

We recall from Theorem \ref{theo:VAequalsVtau} that $V_{\tau}$ is equal to
$V(A^{\tau}),$ where $A^{\tau}$ is as in (\ref{AcTau}). We then factor
$A_{\tau}^{\mathbb{C}}A_{\mathbb{C}}^{-1}$ as the product of two matrices, the
first being diagonal with diagonal entries having absolute value 1 and the
second having diagonal entries that are real and positive. (This factorization
corresponds to writing $AA_{\tau}$ as the product of a rotation and a positive
symmetric matrix.) A computation shows that the factorization is%
\begin{align*}
&  \left(
\begin{array}
[c]{rr}%
\frac{1}{\sqrt{1-\left\vert \tau\right\vert ^{2}}} & \frac{\tau}%
{\sqrt{1-\left\vert \tau\right\vert ^{2}}}\\
\frac{\bar{\tau}}{\sqrt{1-\left\vert \tau\right\vert ^{2}}} & \frac{1}%
{\sqrt{1-\left\vert \tau\right\vert ^{2}}}%
\end{array}
\right)  \left(
\begin{array}
[c]{rr}%
\bar{p} & q\\
\bar{q} & p
\end{array}
\right) \\
&  =\left(
\begin{array}
[c]{cc}%
1/\psi(\tau) & 0\\
0 & \psi(\tau)
\end{array}
\right)  \left(
\begin{array}
[c]{rr}%
\frac{1}{\sqrt{1-\left\vert \varphi(\tau)\right\vert ^{2}}} & \frac
{\varphi(\tau)}{\sqrt{1-\left\vert \varphi(\tau)\right\vert ^{2}}}\\
\frac{\overline{\varphi(\tau)}}{\sqrt{1-\left\vert \varphi(\tau)\right\vert
^{2}}} & \frac{1}{\sqrt{1-\left\vert \varphi(\tau)\right\vert ^{2}}}%
\end{array}
\right)  .
\end{align*}

Then, using (\ref{VAmult}), and Point \ref{VArot.point} of Proposition
\ref{VAprops.prop}, we find that
\begin{align*}
(V_{\tau}V(A)G)(z)  &  =\sqrt{\psi(\tau)}(V_{\varphi(\tau)}G)(\psi
(\tau)z)\\
&  =\sqrt{\psi(\tau)}\cdot Q_{\varphi(\tau)}(\psi(\tau)z).
\end{align*}
Since $V(A)G$ is again a GAF, the claimed result follows.
\end{proof}

\begin{proof}
[Second proof of Theorem \ref{theo:hyperbolic_invariance}]The second proof is
by direct computation of the covariances. Since we are dealing with
multivariate complex Gaussian processes, it suffices to check the equality of
covariance functions. Take some $z,w\in\mathbb{C}$ and $\tau,\sigma
\in\mathbb{D}$. Then, by~\eqref{eq:Q_def} and~\eqref{eq:cov_G_t_aux},
\begin{multline}
\mathbb{E}\left[  Q_{\tau}(z)\overline{Q_{\sigma}(w)}\right]  =\left(
\frac{(1-|\tau|^{2})^{1/2}(1-|\sigma|^{2})^{1/2}}{1-\tau\bar{\sigma}}\right)
^{1/2}\cdot\label{eq:covariance_Q}\\
\exp\left\{  z\overline{w}\cdot\frac{(1-|\tau|^{2})^{1/2}(1-|\sigma
|^{2})^{1/2}}{1-\tau\bar{\sigma}}\right\}  \cdot\exp\left\{  \frac{1}{2}%
z^{2}\cdot\frac{\bar{\tau}-\bar{\sigma}}{1-\tau\bar{\sigma}}+\frac{1}{2}%
\bar{w}^{2}\cdot\frac{\sigma-\tau}{1-\tau\bar{\sigma}}\right\}  .
\end{multline}
It is easy to check that
\[
1-\varphi(\tau)\overline{\varphi(\sigma)}=\frac{1-\tau\bar{\sigma}}{(\bar
{q}\tau+\bar{p})(q\bar{\sigma}+p)}\quad\text{ and }\quad\varphi(\tau
)-\varphi(\sigma)=\frac{\tau-\sigma}{(\bar{q}\tau+\bar{p})(\bar{q}\sigma
+\bar{p})}.
\]
In particular, as a special case of the first identity we obtain
\[
1-|\varphi(\tau)|^{2}=\frac{1-|\tau|^{2}}{|q\bar{\tau}+p|^{2}},\qquad
1-|\varphi(\sigma)|^{2}=\frac{1-|\sigma|^{2}}{|q\bar{\sigma}+p|^{2}}.
\]
It follows that
\begin{align*}
\frac{(1-|\varphi(\tau)|^{2})^{1/2}(1-|\varphi(\sigma)|^{2})^{1/2}}%
{1-\varphi(\tau)\overline{\varphi(\sigma)}}  &  =\frac{(1-|\tau|^{2}%
)^{1/2}(1-|\sigma|^{2})^{1/2}}{1-\tau\bar{\sigma}}\cdot\frac{\psi(\sigma
)}{\psi(\tau)},\\
\frac{\overline{\varphi(\tau)}-\overline{\varphi(\sigma)}}{1-\varphi
(\tau)\overline{\varphi(\sigma)}}  &  =\frac{\bar{\tau}-\bar{\sigma}}%
{1-\tau\bar{\sigma}}\cdot\frac{\bar{q}\tau+\bar{p}}{q\bar{\tau}+p}=\frac
{\bar{\tau}-\bar{\sigma}}{1-\tau\bar{\sigma}}\cdot\frac{1}{\psi^{2}(\tau)},\\
\frac{\varphi(\sigma)-\varphi(\tau)}{1-\varphi(\tau)\overline{\varphi(\sigma
)}}  &  =\frac{\sigma-\tau}{1-\tau\bar{\sigma}}\cdot\frac{q\bar{\sigma}%
+p}{\bar{q}\sigma+\bar{p}}=\frac{\sigma-\tau}{1-\tau\bar{\sigma}}\cdot\psi
^{2}(\sigma).
\end{align*}
Using all these identities together with~\eqref{eq:covariance_Q}, we arrive
at
\[
\mathbb{E}\left[  Q_{\varphi(\tau)}(\psi(\tau)z)\overline{Q_{\varphi(\sigma
)}(\psi(\sigma)w)}\right]  =\left(  \frac{\psi(\sigma)}{\psi(\tau)}\right)
^{1/2}\mathbb{E}\left[  Q_{\tau}(z)\overline{Q_{\sigma}(w)}\right]  ,
\]
which proves the claim.
\end{proof}

\section{Differential equations for the zeros}

In this section, we consider systems of differential equations for the zeros
of a entire functions of different orders evolving according to the heat flow.
In the case of a function of order $\rho<1,$ we recover earlier results of
Papanicolaou, Kallitsi, and Smyrlis \cite{PKS}, but we also obtain results for
functions of order $\rho<2$ and functions of order $\rho=2$ and finite type.
We then construct an iterative method for solving the system in each case.

\subsection{The first and second derivatives\label{FirstSecond.sec}}

In this section, it is convenient to introduce a sub-case of Situation (S1),
which we call (S0), namely when $F$ is an entire holomorphic function of order
$\rho<1.$ We begin with the (S0) case, where the formula is the same as in the
polynomial case. In this (S0) case, the result was obtained previously by
Papanicolaou, Kallitsi, and Smyrlis \cite[Equation (4.40)]{PKS}.

\begin{theorem}
[First derivatives for (S0) case]\label{S0deriv.thm}Suppose $F$ is an entire
function of order $\rho<1$ and define $F(z,\tau)$ by%
\[
F(z,\tau)=(e^{-\tau D^{2}/2}F)(z),
\]
for all $\tau$ and $z$ in $\mathbb{C}.$ Fix $\tau_{0}\in\mathbb{C}$, assume
$z_{0}$ is a simple zero of $F(z,\tau_{0}),$ and let $z(\tau)$ be the unique
holomorphic function defined near $\tau_{0}$ such that $z(\tau_{0})=z_{0}$ and
$F(z(\tau),\tau)=0.$ Then we have the following formulas for the derivative of
$z(\tau)$ for $\tau$ near $\tau_{0}$:%
\begin{equation}
z^{\prime}(\tau)=\sum_{w\in\mathbb{C},~F(w,\tau)=0}\frac{\mathbf{1}_{\{w\neq
z_{0}\}}}{z(\tau)-w}, \label{zprime0}%
\end{equation}
where the zeros of $F(z,\tau)$ are listed with their multiplicities.
\end{theorem}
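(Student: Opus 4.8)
The plan is to reduce the statement to the ``regularized logarithmic derivative'' already produced by Proposition~\ref{diffZero.prop} and then evaluate that expression by means of the Hadamard factorization of $z\mapsto F(z,\tau)$, where $F(z,\tau)=(e^{-\tau D^{2}/2}F)(z)$. Proposition~\ref{diffZero.prop} gives the holomorphic branch $z(\tau)$ near $\tau_{0}$ (legitimate because $z_{0}$ is a simple zero) together with formula~\eqref{zPrime2}, which reads
\[
z^{\prime}(\tau)=\lim_{z\to z(\tau)}\left(\frac{\partial_{1}F(z,\tau)}{F(z,\tau)}-\frac{1}{z-z(\tau)}\right).
\]
So it suffices to prove that, for $\tau$ near $\tau_{0}$, the logarithmic derivative $z\mapsto\partial_{1}F(z,\tau)/F(z,\tau)$ admits the partial-fraction expansion $\sum_{w}\frac{1}{z-w}$, the sum running over all zeros $w$ of $z\mapsto F(z,\tau)$ counted with multiplicity and converging absolutely and locally uniformly off the zero set. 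Granting this, $z(\tau)$ being a simple zero means that exactly one summand is singular at $z(\tau)$; subtracting $1/(z-z(\tau))$ removes precisely that summand, the remaining sum $\sum_{w\neq z(\tau)}\frac{1}{z-w}$ is holomorphic at $z(\tau)$, and evaluating there gives $z^{\prime}(\tau)=\sum_{w\neq z(\tau)}\frac{1}{z(\tau)-w}$, which is~\eqref{zprime0}: for $\tau$ close to $\tau_{0}$ the omitted term $w=z(\tau)$ is the one written as $w=z_{0}$ in~\eqref{zprime0}, and (again since $z_{0}$ is a simple zero of $F(\cdot,\tau_{0})$) every other zero of $F(\cdot,\tau)$ stays away from $z_{0}$, so the two exclusions select the same terms.

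To get the partial-fraction expansion I would apply the Hadamard factorization theorem, which takes its cleanest form once one knows that $F(\cdot,\tau)$ is again of order strictly less than $1$. This is the key point; it is slightly sharper than Proposition~\ref{prop:heat_flow_properties2}(1), which records only preservation of order $<2$. One can read it off from the coefficients: regrouping~\eqref{eq:heat_flow_def_3}, the coefficient of $z^{j}$ in $F(z,\tau)$ is $b_{j}=\sum_{k\ge0}a_{j+2k}\,\frac{(-1)^{k}(j+2k)!}{2^{k}k!\,j!}\,\tau^{k}$, so $|b_{j}|\le\sum_{k\ge0}|a_{j+2k}|\,(j+2k)^{2k}\,|\tau|^{k}$ using $(j+2k)!/j!\le(j+2k)^{2k}$ and $2^{k}k!\ge1$. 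If $F$ has order $\rho<1$, then by~\eqref{eq:lindeloef_pringsheim} there is $\delta>0$ with $|a_{n}|\le n^{-(1+\delta)n}$ for all large $n$, so $|a_{j+2k}|\,(j+2k)^{2k}\,|\tau|^{k}\le(j+2k)^{-(1+\delta)j-2\delta k}|\tau|^{k}\le j^{-(1+\delta)j}\,(j^{-2\delta}|\tau|)^{k}$, and summing the geometric series gives $|b_{j}|\le2\,j^{-(1+\delta)j}$ for all large $j$. By~\eqref{eq:lindeloef_pringsheim} again, $F(\cdot,\tau)$ has order at most $1/(1+\delta)<1$. Consequently its exponent of convergence is $<1$ and its genus is $0$, so the Hadamard theorem gives $F(z,\tau)=c(\tau)\,z^{m(\tau)}\prod_{w}(1-z/w)$ (product over the nonzero zeros with multiplicity) with $\sum_{w}1/|w|<\infty$. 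Logarithmic differentiation then yields exactly $\partial_{1}F(z,\tau)/F(z,\tau)=m(\tau)/z+\sum_{w}\frac{1}{z-w}$, i.e.\ the sum over all zeros with the zero at the origin counted $m(\tau)$ times; absolute and locally uniform convergence off the zero set follows from $\sum_{w}1/|w|<\infty$ together with the bound $|z-w|^{-1}\le2|w|^{-1}$ valid once $|w|\ge2|z|$.

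Combining the two ingredients as in the first paragraph finishes the argument, and I expect the order-preservation claim of the second paragraph to be the only genuinely non-routine step: if $F(\cdot,\tau)$ were merely of order $<2$, its Hadamard factorization could carry an extra exponential factor $e^{g(z)}$ with $g$ a nonconstant polynomial, and its canonical product could have genus $\ge1$, so that $\partial_{1}F/F$ would pick up additional polynomial and ``counterterm'' contributions and the clean formula~\eqref{zprime0} would fail---this is exactly why the first-derivative formula has to be modified in the (S1) and (S2) cases treated later. Everything else is a direct consequence of Proposition~\ref{diffZero.prop} and the Hadamard product, with all rearrangements justified by the absolute convergence coming from $\sum_{w}1/|w|<\infty$.
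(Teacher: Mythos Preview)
Your argument is correct and follows essentially the same route as the paper: start from Proposition~\ref{diffZero.prop}, invoke the genus-$0$ Hadamard factorization of $F(\cdot,\tau)$, take the logarithmic derivative, and subtract the singular summand. The one place where you do more than the paper is the order-preservation step: the paper simply writes down the factorization~\eqref{Hadamard1} without comment, implicitly relying on Proposition~\ref{prop:heat_flow_properties2}(1) (whose proof, read carefully, does give preservation of the exact order $\rho$, not merely of the condition $\rho<2$), whereas you supply an independent coefficient estimate showing directly that $F(\cdot,\tau)$ has order $<1$. That extra paragraph is sound and arguably makes the argument more self-contained, but it is not a different strategy---just a more explicit justification of a step the paper takes for granted.
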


We then turn to the (S1) case.

\begin{theorem}
[First derivatives for (S1) case]\label{S1deriv.thm}Suppose $F$ is an entire
function of order $\rho<2$ and define $F(z,\tau)$ by%
\[
F(z,\tau)=(e^{-\tau D^{2}/2}F)(z),
\]
for all $\tau$ and $z$ in $\mathbb{C}.$ Fix $\tau_{0}$ and $c$ in
$\mathbb{C},$ assume $F(c,\tau_{0})\neq0,$ and define%
\begin{equation}
a_{1}(\tau)=\partial_{1}\log F(c,\tau) \label{a1def}%
\end{equation}
for $\tau$ near $\tau_{0}.$ Suppose also that $z_{0}$ is a simple zero of
$F(z,\tau_{0})$ and let $z(\tau)$ be the unique holomorphic function defined
near $\tau_{0}$ such that $z(\tau_{0})=z_{0}$ and $F(z(\tau),\tau)=0.$ Then we
have the following formulas for the derivatives of $a_{1}(\tau)$ and $z(\tau)$
for $\tau$ near $\tau_{0}$:%
\begin{align}
a_{1}^{\prime}(\tau)  &  =a_{1}(\tau)\sum_{w\in\mathbb{C},F(w,\tau)=0}\frac
{1}{(w-c)^{2}}+\sum_{w\in\mathbb{C},F(w,\tau)=0}\frac{1}{(w-c)^{3}%
}\label{a1Prime1}\\
z^{\prime}(\tau)  &  =a_{1}(\tau)+\sum_{w\in\mathbb{C},F(w,\tau)=0}\left(
\frac{\mathbf{1}_{\{w\neq z(\tau)\}}}{z(\tau)-w}+\frac{1}{w-c}\right)  ,
\label{zzPrime1}%
\end{align}
where the zeros of \thinspace$F(z,\tau)$ are listed with their multiplicities.
\end{theorem}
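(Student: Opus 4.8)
The plan is to read the two derivative formulas off the Hadamard factorization of $F(\cdot,\tau)$ together with the backward heat equation. By Proposition~\ref{prop:heat_flow_properties2}, $F(\cdot,\tau)$ is an entire function of order $\rho<2$ for every $\tau$ near $\tau_{0}$; hence its zero sequence has convergence exponent strictly less than $2$ (so $\sum_{w}|w|^{-2}<\infty$) and Hadamard's factorization theorem (see, e.g.,~\cite{levin_book_lectures}) gives
\[
F(z,\tau)=z^{m}\,\mathrm{e}^{c_{0}(\tau)+c_{1}(\tau)z}\prod_{w\neq 0}\Big[\Big(1-\frac{z}{w}\Big)\mathrm{e}^{z/w}\Big],
\]
where the exponential polynomial has degree at most $1$ precisely because $\rho<2$. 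Taking the second logarithmic derivative $\partial_{1}^{2}\log F(z,\tau)$ annihilates that exponential and yields the absolutely and locally uniformly (in $z$, away from the zeros) convergent expansion
\[
\partial_{1}^{2}\log F(z,\tau)=-\sum_{w}\frac{1}{(z-w)^{2}},
\]
the sum running over all zeros $w$ of $F(\cdot,\tau)$ listed with multiplicity (a zero at the origin, if present, being included). Integrating once in $z$ from $z=c$ and using the definition $a_{1}(\tau)=\partial_{1}\log F(c,\tau)$, and noting that $F(c,\tau)\neq 0$ keeps $c$ away from every $w$, I obtain
\[
g(z,\tau):=\partial_{1}\log F(z,\tau)=a_{1}(\tau)+\sum_{w}\Big(\frac{1}{z-w}+\frac{1}{w-c}\Big),
\]
again with locally uniform convergence away from the zeros; setting $z=c$ recovers the identity $g(c,\tau)=a_{1}(\tau)$.

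For~\eqref{zzPrime1} I would invoke Proposition~\ref{diffZero.prop}, which in the form~\eqref{zPrime2} states $z'(\tau)=\lim_{z\to z(\tau)}\big(g(z,\tau)-\frac{1}{z-z(\tau)}\big)$. Since $z(\tau)$ is a simple zero of $F(\cdot,\tau)$, exactly one summand in the expansion of $g$---the one indexed by $w=z(\tau)$---equals $\frac{1}{z-z(\tau)}+\frac{1}{z(\tau)-c}$; subtracting $\frac{1}{z-z(\tau)}$ and letting $z\to z(\tau)$ (legitimate by local uniform convergence of the remaining terms near $z(\tau)$) gives exactly $a_{1}(\tau)+\sum_{w}\big(\mathbf{1}_{\{w\neq z(\tau)\}}/(z(\tau)-w)+1/(w-c)\big)$, which is~\eqref{zzPrime1}.

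For~\eqref{a1Prime1} I would differentiate $a_{1}(\tau)=\partial_{1}F(c,\tau)/F(c,\tau)$ in $\tau$. Because $F$ is jointly analytic in $(z,\tau)$ by Theorem~\ref{theo:entire_function_heat_flow_exists} and satisfies $\partial_{2}F=-\frac{1}{2}\partial_{1}^{2}F$ (Lemma~\ref{lem:heat_flow_properties1}, Point~4), the $\tau$-derivatives turn into $z$-derivatives: $\partial_{2}F=-\frac{1}{2}F_{2}$ and $\partial_{2}\partial_{1}F=-\frac{1}{2}F_{3}$, where $F_{k}=\partial_{1}^{k}F$. The quotient rule together with the identities $F_{2}/F=\partial_{1}g+g^{2}$ and $F_{3}/F=\partial_{1}^{2}g+3g\,\partial_{1}g+g^{3}$ then collapses everything to
\[
a_{1}'(\tau)=-\frac{1}{2}\,\partial_{1}^{2}g(c,\tau)-g(c,\tau)\,\partial_{1}g(c,\tau).
\]
Substituting $g(c,\tau)=a_{1}(\tau)$, $\partial_{1}g(c,\tau)=-\sum_{w}(c-w)^{-2}=-\sum_{w}(w-c)^{-2}$, and $\partial_{1}^{2}g(c,\tau)=2\sum_{w}(c-w)^{-3}=-2\sum_{w}(w-c)^{-3}$ (all three series converging absolutely because $\sum_{w}|w|^{-2}<\infty$) yields~\eqref{a1Prime1}. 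Here one differentiates the plain quotient $F_{1}/F$ in $\tau$ and only afterwards inserts the Hadamard expansion at the fixed value of $\tau$, so no term-by-term differentiation of the $w$-sum is involved.

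The step I expect to be the main obstacle is the first: establishing rigorously that $\partial_{1}^{2}\log F(z,\tau)=-\sum_{w}(z-w)^{-2}$ with absolute and locally uniform convergence---equivalently, that the ``entire part'' of the logarithmic derivative is constant, which is exactly where the hypothesis $\rho<2$ (forcing the Hadamard exponential to have degree $\le 1$) is used---and checking that this convergence survives the limit $z\to z(\tau)$ in~\eqref{zPrime2} as well as the evaluations at $z=c$. Everything after that is the short heat-equation computation sketched above.
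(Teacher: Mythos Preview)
Your proposal is correct and follows essentially the same route as the paper: both arguments rest on the Hadamard factorization of $F(\cdot,\tau)$ (valid with a degree-$\le 1$ exponential because $\rho<2$), use Proposition~\ref{diffZero.prop} in the form~\eqref{zPrime2} to read off $z'(\tau)$ by subtracting the singular term from $\partial_1\log F$, and obtain $a_1'(\tau)$ from the heat-equation identity $\partial_\tau(\partial_1\log F)=-H_zH_{zz}-\tfrac12 H_{zzz}$ evaluated at $z=c$. The only cosmetic difference is that the paper writes out $H=\log F$ from the Hadamard product directly (taking $c=0$), whereas you first identify $\partial_1^2\log F=-\sum_w(z-w)^{-2}$ and integrate once from $c$; these yield the same expansion of $g=\partial_1\log F$.
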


Finally, we turn to the (S2) case.

\begin{theorem}
[First derivatives for (S2) case]\label{S2deriv.thm}Suppose $F$ is an entire
function of order $\rho=2$ and finite type $\sigma,$ and define $F(z,\tau)$ by%
\[
F(z,\tau)=(e^{-\tau D^{2}/2}F)(z),
\]
for $\left\vert \tau\right\vert <1/(2\sigma)$ and $z$ in $\mathbb{C}.$ Fix
$\tau_{0}$ and $c$ in $\mathbb{C},$ assume $F(c,\tau_{0})\neq0,$ and define%
\begin{align*}
a_{1}(\tau)  &  =\partial_{1}\log F(c,\tau)\\
a_{2}(\tau)  &  =\partial_{1}^{2}\log F(c,\tau)
\end{align*}
for $\tau$ near $\tau_{0}.$ Suppose also that $z_{0}$ is a simple zero of
$F(z,\tau_{0})$ and let $z(\tau)$ be the unique holomorphic function defined
near $\tau_{0}$ such that $z(\tau_{0})=z_{0}$ and $F(z(\tau),\tau)=0.$ Then we
have the following formulas for the derivatives of $a_{1}(\tau),$ $a_{2}%
(\tau),$ and $z(\tau)$ for $\tau$ near $\tau_{0}$:%
\begin{align}
a_{1}^{\prime}(\tau)  &  =-a_{1}(\tau)a_{2}(\tau)+\sum_{w\in\mathbb{C}%
,F(w,\tau)=0}\frac{1}{(w-c)^{3}}\label{a1Prime2}\\
a_{2}^{\prime}(\tau)  &  =-a_{2}(\tau)^{2}+2a_{1}(\tau)\sum_{w\in
\mathbb{C},F(w,\tau)=0}\frac{1}{(w-c)^{3}}+3\sum_{w\in\mathbb{C},F(w,\tau
)=0}\frac{1}{(w-c)^{4}}\label{a2Prime2}\\
z^{\prime}(\tau)  &  =a_{1}(\tau)+a_{2}(\tau)(z(\tau)-c)\nonumber\\
&  +\sum_{w\in\mathbb{C},F(w,\tau)=0}\left(  \frac{\mathbf{1}_{\{w\neq
z(\tau)\}}}{z(\tau)-w}+\frac{1}{w-c}+\frac{z(\tau)-c}{(w-c)^{2}}\right)  ,
\label{zzPrime2}%
\end{align}
where the zeros of \thinspace$F(z,\tau)$ are listed with their multiplicities.
\end{theorem}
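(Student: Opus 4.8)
The plan is to reduce all three formulas to two ingredients: a Hadamard-type partial fraction expansion for the logarithmic derivative of $F(\cdot,\tau)$ about the base point $c$, and the backward heat equation $\partial_\tau F=-\tfrac12\partial_1^2 F$ from Point~4 of Lemma~\ref{lem:heat_flow_properties1}. Write $F(z,\tau)=(e^{-\tau D^2/2}F)(z)$; by Proposition~\ref{prop:heat_flow_properties2} this is, for each $\tau$ with $|\tau|<1/(2\sigma)$, entire of order $2$ and finite type, hence of genus at most $2$, so its zeros $w$ (counted with multiplicity) satisfy $\sum_w|w|^{-3}<\infty$.

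\emph{Step 1 (the expansion).} I would first establish that whenever $F(c,\tau)\neq0$,
\[
\frac{\partial_1 F(z,\tau)}{F(z,\tau)}=a_1(\tau)+a_2(\tau)(z-c)+\sum_{w:\,F(w,\tau)=0}\left(\frac{1}{z-w}+\frac{1}{w-c}+\frac{z-c}{(w-c)^2}\right),
\]
the series converging absolutely and locally uniformly off the zeros (each summand is $O(|w|^{-3})$). One takes the logarithmic derivative of the Hadamard factorization of $F(\cdot,\tau)$---a degree-$\le2$ exponential times a genus-$\le2$ canonical product---and recenters the elementary factors from $0$ to $c$; although $\sum_w(\tfrac1{w-c}-\tfrac1w)$ need not converge, the full recentering correction has termwise order $O(|w|^{-3})$, so it sums to a polynomial of degree $\le1$ in $z$. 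Equivalently, the difference between the left side and the written series is entire (its poles cancel) with at most linear growth, hence affine. Evaluating both sides, and their first $z$-derivative, at $z=c$---where every bracketed term vanishes---identifies the constant and linear coefficients as $a_1(\tau)=\partial_1\log F(c,\tau)$ and $a_2(\tau)=\partial_1^2\log F(c,\tau)$. Differentiating the displayed identity once and twice more in $z$ and setting $z=c$ then gives
\[
\partial_1^3\log F(c,\tau)=-2\sum_w\frac{1}{(w-c)^3},\qquad\partial_1^4\log F(c,\tau)=-6\sum_w\frac{1}{(w-c)^4}.
\]

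\emph{Step 2 (differentiating).} For $z'(\tau)$ I use \eqref{zPrime2}: $z'(\tau)$ is the value at $z=z(\tau)$ of $\partial_1 F(z,\tau)/F(z,\tau)-1/(z-z(\tau))$, which has a removable singularity there. Substituting Step~1 and isolating the (simple) $w=z(\tau)$ summand, the subtracted $1/(z-z(\tau))$ cancels the pole and the remaining part of that summand contributes $2/(z(\tau)-c)$; regrouping yields exactly \eqref{zzPrime2}. For the other two, set $\phi=\log F$ near $z=c$; from $\partial_\tau F=-\tfrac12\partial_1^2 F$ one gets $\partial_\tau\phi=-\tfrac12(\partial_1^2\phi+(\partial_1\phi)^2)$, and, $F$ being jointly holomorphic in $(z,\tau)$ by Theorem~\ref{theo:entire_function_heat_flow_exists}, mixed partials commute, so $a_1'(\tau)=\partial_1\partial_\tau\phi(c,\tau)$ and $a_2'(\tau)=\partial_1^2\partial_\tau\phi(c,\tau)$. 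Expanding these in terms of $a_1,a_2$ and $\partial_1^3\log F(c,\tau),\partial_1^4\log F(c,\tau)$ gives $a_1'=-a_1a_2-\tfrac12\partial_1^3\log F(c,\tau)$ and $a_2'=-a_2^2-a_1\partial_1^3\log F(c,\tau)-\tfrac12\partial_1^4\log F(c,\tau)$; substituting the two sum formulas from Step~1 produces \eqref{a1Prime2} and \eqref{a2Prime2}.

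The main obstacle is Step~1: invoking the Hadamard factorization for order-$2$ finite-type functions and verifying that the recentered correction is genuinely a degree-$\le1$ polynomial. The delicate point is that the naturally occurring series converge only after terms are combined---this is exactly where the genus bound $\sum_w|w|^{-3}<\infty$ is used and where the weaker $\sum_w|w|^{-2}<\infty$ would not suffice (and indeed fails, e.g., for the GAF). Everything after Step~1 is routine differentiation; the same scheme with genus-$\le0$ or genus-$\le1$ recentering proves Theorems~\ref{S0deriv.thm} and~\ref{S1deriv.thm}.
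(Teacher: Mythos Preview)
Your argument is correct and follows essentially the same route as the paper: Hadamard factorization of $F(\cdot,\tau)$ in genus $\le 2$, the formula \eqref{zPrime2} for $z'(\tau)$, and the heat-equation identity $\partial_\tau\log F=-\tfrac12\big((\partial_1\log F)^2+\partial_1^2\log F\big)$ differentiated in $z$ and evaluated at $c$. The paper sidesteps your recentering discussion by simply taking $c=0$ and writing the Hadamard product directly about that point; otherwise the computations are the same.
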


In the previous theorems, we do not assume that \textit{all} the zeros of
$F(z,\tau)$ are simple, but only that $z_{0}$ is a simple zero of
$F(z,\tau_{0}).$ In particular, there is no ambiguity in any of the formulas
if the same $w\in\mathbb{C}$ occurs more than once.

We now turn to the computation of the second derivatives of the zeros with
respect to $\tau,$ and find that we obtain the same formula in all three cases
S0, S1, and S2, namely the same rational Calogero--Moser equation we have in
the polynomial case.

\begin{theorem}
[Second derivatives]\label{secDeriv.thm}Suppose $F$ is an entire function of
order $\rho<2$ or order $\rho=2$ and finite type $\sigma,$ and define
$F(z,\tau)$ by%
\[
F(z,\tau)=(e^{-\tau D^{2}/2}F)(z),
\]
for all $z\in\mathbb{C},$ where we assume $\left\vert \tau\right\vert
<1/(2\sigma)$ if $\rho=2.$ Fix $\tau_{0}\in\mathbb{C}$, assume $z_{0}$ is a
simple zero of $F(z,\tau_{0}),$ and let $z(\tau)$ be the unique holomorphic
function defined near $\tau_{0}$ such that $z(\tau_{0})=z_{0}$ and
$F(z(\tau),\tau)=0.$ Then we have the following formula for the second
derivative of $z(\tau)$ for $\tau$ near $\tau_{0}$:%
\[
z^{\prime\prime}(\tau)=-2\sum_{w\in\mathbb{C},F(w,\tau)=0}\frac{\mathbf{1}%
_{\{w\neq z_{0}\}}}{(z(\tau)-w)^{3}}.
\]

\end{theorem}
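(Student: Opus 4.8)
The plan is to reduce the assertion to a purely local computation around the moving zero $\zeta(\tau):=z(\tau)$ and then to convert the resulting expression into a sum over all zeros via the Hadamard factorization of $F(\cdot,\tau)$. Fix $\tau$ near $\tau_0$ and expand $F(z,\tau)$ about its simple zero $\zeta(\tau)$,
\[
F(z,\tau)=\sum_{k\ge 1}c_k(\tau)\,(z-\zeta(\tau))^k,\qquad c_1(\tau)\ne 0 .
\]
By Proposition \ref{diffZero.prop} we have $z'(\tau)=\tfrac12\,\partial_1^2F(\zeta,\tau)/\partial_1F(\zeta,\tau)=c_2/c_1$, which is the starting point.

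Next I would differentiate $z'=c_2/c_1$ once more in $\tau$. Since $c_k(\tau)=\tfrac1{k!}(\partial_z^k F)(\zeta(\tau),\tau)$, the chain rule together with the backward heat equation $\partial_\tau F=-\tfrac12\partial_z^2F$ (Lemma \ref{lem:heat_flow_properties1}, point 4) gives
\[
c_k'(\tau)=(k+1)\,c_{k+1}\,z'(\tau)-\tfrac{(k+1)(k+2)}{2}\,c_{k+2}.
\]
Applying this for $k=1$ and $k=2$, substituting $z'=c_2/c_1$, and simplifying (routine algebra with rational expressions in the $c_k$'s) yields the closed form
\[
z''(\tau)=\frac{6c_2c_3}{c_1^2}-\frac{6c_4}{c_1}-\frac{2c_2^3}{c_1^3}.
\]
Equivalently, one may first derive the ``complex Burgers'' equation $\partial_\tau L=-\tfrac12\partial_z^2L-L\,\partial_zL$ for $L:=F'/F$ and read off the $(z-\zeta)^0$-coefficient in the local expansion $L=\tfrac1{z-\zeta}+\sum_{k\ge0}b_k(z-\zeta)^k$; I would use whichever bookkeeping is shorter to write down.

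To identify this with the claimed sum, note that the same power-series manipulation gives $b_2=\tfrac{3c_4}{c_1}-\tfrac{3c_2c_3}{c_1^2}+\tfrac{c_2^3}{c_1^3}$, so that $z''(\tau)=-2b_2$. On the other hand, $F(\cdot,\tau)$ has genus at most $1$ when $\rho<2$ and at most $2$ when $\rho=2$ and $\sigma<\infty$, so its Hadamard product gives
\[
\frac{F'(z,\tau)}{F(z,\tau)}=P(z)+\sum_{w}\Big(\frac1{z-w}+r_w(z)\Big),
\]
the sum running over all zeros $w$ of $F(\cdot,\tau)$ with multiplicity, where $P$ is a polynomial of degree at most $1$ and each convergence factor $r_w$ is affine in $z$. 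Because the order is at most $2$ one has $\sum_w|w|^{-3}<\infty$, so the series converges locally uniformly near $\zeta$ and its $(z-\zeta)^2$-coefficient may be computed termwise; the affine pieces $P$ and $r_w$ contribute nothing at that level, and each $\tfrac1{z-w}$ contributes $-(w-\zeta)^{-3}=(\zeta-w)^{-3}$. Hence $b_2=\sum_{w\ne\zeta}(\zeta-w)^{-3}$ (absolutely convergent), and therefore
\[
z''(\tau)=-2\sum_{w:\,F(w,\tau)=0,\ w\ne z(\tau)}\frac1{(z(\tau)-w)^3},
\]
which is the assertion, with $z(\tau_0)=z_0$ and the indicator removing the tracked zero $z(\tau)$ (the one equal to $z_0$ at $\tau_0$).

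The main obstacle is the single genuinely analytic step: justifying the termwise extraction of the $(z-\zeta)^2$-coefficient from the Hadamard product, in particular the absolute convergence $\sum_w|w|^{-3}<\infty$ and the control of the compensating factors $r_w$ in the borderline order-$2$, finite-type case. It is precisely here that one sees why no correction terms survive in the final formula --- unlike in Theorems \ref{S1deriv.thm} and \ref{S2deriv.thm} --- since $P$ and the $r_w$ are affine in $z$ and do not reach the $(z-\zeta)^2$ level. An alternative that sidesteps the Hadamard product is to differentiate the already-established first-derivative formulas \eqref{zprime0}, \eqref{zzPrime1}, \eqref{zzPrime2} directly, using that every other zero $w(\tau)$ obeys the same differential relations and checking that all the $a_1$-, $a_2$- and compensation terms cancel; but the local computation above is preferable because it is uniform across the cases (S0), (S1), (S2).
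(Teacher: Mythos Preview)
Your argument is correct and is essentially the paper's own proof, organized slightly differently. The paper differentiates $F(z(\tau),\tau)=0$ twice, rewrites the result through the factorization $F=(1-z/z(\tau))F^{\mathrm{reg}}$, and reduces to $z''(\tau)=-H^{\mathrm{reg}}_{zzz}$ with $H^{\mathrm{reg}}=\log F^{\mathrm{reg}}$; you instead track the Taylor coefficients $c_k$ via the recursion $c_k'=(k+1)c_{k+1}z'-\tfrac{(k+1)(k+2)}{2}c_{k+2}$ and arrive at $z''=-2b_2$ where $b_2$ is the $(z-\zeta)^2$--coefficient of $L=F'/F$. These are the same statement, since $b_2=\tfrac12 H^{\mathrm{reg}}_{zzz}(\zeta)$.

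The crucial step --- and you identify it correctly --- is the one where the Hadamard product is used to see that the polynomial $P$ (degree $\le 1$) and the affine compensators $r_w(z)=1/w+z/w^2$ contribute nothing at the $(z-\zeta)^2$ level, so that only the raw poles $1/(z-w)$ survive. This is exactly what the paper does: after writing $H^{\mathrm{reg}}$ from the Hadamard product, the at-most-quadratic terms vanish upon taking $\partial_z^3$. Your justification of termwise extraction (locally uniform convergence of the logarithmic derivative series near $\zeta$, together with $\sum_w|w|^{-3}<\infty$ since the convergence exponent is at most $\rho\le 2$) is adequate and matches what the paper implicitly uses. The alternative you mention at the end --- differentiating the first-derivative formulas \eqref{zprime0}, \eqref{zzPrime1}, \eqref{zzPrime2} and checking cancellation of the $a_1$, $a_2$ and compensator terms --- would also work but is messier and case-dependent; both you and the paper rightly prefer the uniform local computation.
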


\begin{proof}
[Proof of Theorems \ref{S0deriv.thm}, \ref{S1deriv.thm}, and \ref{S2deriv.thm}%
]We use the subscript notation for partial derivatives and we write $F=e^{H},$
where $H=\log F,$ away from the zeros of $F.$ To compute $z^{\prime}(\tau),$
we will use Proposition \ref{diffZero.prop} and compute that
\begin{equation}
z^{\prime}(\tau)=\lim_{z\rightarrow z_{0}}\left(  \frac{F_{z}(z,\tau
)}{F(z,\tau)}-\frac{1}{z-z(\tau)}\right)  =\lim_{z\rightarrow z_{0}}\left(
H_{z}(z,\tau)-\frac{1}{z-z(\tau)}\right)  . \label{zprimeProof}%
\end{equation}

We then recall the Hadamard factorizations of $F(z,\tau)$ in the cases S0, S1,
and S2, respectively, writing the zeros (for this one fixed $\tau$) as
$\{z_{j}\}_{j}$, with the distinguished zero $z(\tau)$ corresponding to the
case $j=0$. To make the notation more compact, we assume $c=0$, giving%
\begin{align}
F(z,\tau)  &  =e^{a_{0}}\prod_{k}\left(  1-\frac{z}{z_{k}}\right)
\label{Hadamard1}\\
F(z,\tau)  &  =e^{a_{0}+a_{1}z}\prod_{k}\left(  1-\frac{z}{z_{k}}\right)
e^{z/z_{k}}\label{Hadamard2}\\
F(z,\tau)  &  =e^{a_{0}+a_{1}z+\frac{1}{2}a_{2}z^{2}}\prod_{k}\left(
1-\frac{z}{z_{k}}\right)  e^{z/z_{k}+\frac{1}{2}z^{2}/z_{k}^{2}}.
\label{Hadamard3}%
\end{align}

Then we compute $H(z,\tau)=\log F(z,\tau)$ in the three cases, separating out
the zero $z(\tau)$ from the other zeros:%
\begin{align*}
H(z,\tau)  &  =a_{0}+\log\left(  1-\frac{z}{z(\tau)}\right)  +\sum_{k\neq
0}\log\left(  1-\frac{z}{z_{k}}\right) \\
H(z,\tau)  &  =a_{0}+a_{1}z+\log\left(  1-\frac{z}{z(\tau)}\right)  +\frac
{z}{z_{0}}\\
&  +\sum_{k\neq0}\left(  \log\left(  1-\frac{z}{z_{k}}\right)  +\frac{z}%
{z_{k}}\right) \\
H(z,\tau)  &  =a_{0}+a_{1}z+\frac{1}{2}a_{2}z^{2}+\log\left(  1-\frac
{z}{z(\tau)}\right)  +\frac{z}{z_{0}}+\frac{1}{2}\frac{z^{2}}{z_{0}^{2}}\\
&  +\sum_{k\neq0}\left(  \log\left(  1-\frac{z}{z_{k}}\right)  +\frac{z}%
{z_{k}}+\frac{1}{2}\frac{z^{2}}{z_{k}^{2}}\right)  .
\end{align*}
It is then a direct computation to compute (\ref{zprimeProof}) in each of the
three cases to obtain (\ref{zprime0}), (\ref{zzPrime1}), and (\ref{zzPrime2}).

Finally, we compute the derivatives of $a_{1}(\tau)$ in the (S1) case and
$a_{1}(\tau)$ and $a_{2}(\tau)$ in the (S2) case. For notational simplicity, we
take $c=0.$ Noting that
\[
\log\left(  1-\frac{z}{z_{k}}\right)  =-\frac{z}{z_{k}}-\frac{1}{2}\frac
{z^{2}}{z_{j}^{2}}-\frac{1}{3}\frac{z^{3}}{z_{j}^{3}}-\cdots,
\]
we find that $H(z,\tau)=\log F(z,\tau)$ is computed in the (S1) and (S2) cases,
respectively, as%
\begin{align}
H(z,\tau)  &  =a_{0}(\tau)+a_{1}(\tau)z+\sum_{k}\left(  -\frac{1}{2}%
\frac{z^{2}}{z_{k}(\tau)^{2}}-\frac{1}{3}\frac{z^{3}}{z_{k}(\tau)^{3}}%
+O(z^{4})\right) \label{LogH1}\\
H(z,\tau)  &  =a_{0}(\tau)+a_{1}(\tau)z+\frac{1}{2}a_{2}(\tau)z^{2}+\sum
_{k}\left(  -\frac{1}{3}\frac{z^{3}}{z_{k}(\tau)^{3}}-\frac{1}{4}\frac{z^{4}%
}{z_{k}(\tau)}+O(z^{5})\right)  . \label{LogH2}%
\end{align}

Then
\begin{equation}
\frac{\partial H}{\partial\tau}=-\frac{1}{2}\frac{F_{zz}}{F}=-\frac{1}%
{2}(H_{z}^{2}+H_{zz}). \label{calcFzzOverF}%
\end{equation}
Similarly,%
\begin{equation}
\frac{\partial}{\partial\tau}\frac{\partial H}{\partial z}=\frac{\partial
}{\partial z}\frac{\partial H}{\partial\tau}=-H_{z}H_{zz}-\frac{1}{2}H_{zzz},
\label{GzTau}%
\end{equation}
so that in the (S1) and (S2) cases, we have%
\begin{equation}
a_{1}^{\prime}(\tau)=\left.  \left(  -H_{z}H_{zz}-\frac{1}{2}H_{zzz}\right)
\right\vert _{z=0}. \label{da1Dtau}%
\end{equation}
We may now evaluate $a_{1}^{\prime}(\tau)$ (\ref{LogH1}) or (\ref{LogH2}) in
the (S1) and (S2) cases, respectively to obtain (\ref{a1Prime1}) and
(\ref{a1Prime2}).

Finally, we compute%
\begin{align*}
\frac{\partial}{\partial\tau}H_{zz}  &  =\frac{\partial}{\partial z}\left(
-H_{z}H_{zz}-\frac{1}{2}H_{zzz}\right) \\
&  =-H_{zz}^{2}-H_{z}H_{zzz}-\frac{1}{2}H_{zzzz}.
\end{align*}
Evaluating at $z=0$ gives%
\[
\frac{da_{2}}{d\tau}=\left.  \left(  -H_{zz}^{2}-H_{z}H_{zzz}-\frac{1}%
{2}H_{zzzz}\right)  \right\vert _{z=0},
\]
and we may easily compute this last expression from (\ref{LogH2}) to obtain
(\ref{a2Prime2}).
\end{proof}

\begin{proof}
[Proof of Theorem \ref{secDeriv.thm}]Recall from Proposition
\ref{diffZero.prop} that
\begin{equation}
z^{\prime}(\tau)=\frac{1}{2}\frac{F_{zz}(z(\tau),\tau)}{F_{z}(z(\tau),\tau)}.
\label{zPrimeRepeat}%
\end{equation}
If we then differentiate the identity $F(z(\tau),\tau)=0$ twice with respect
to $\tau$ and use (\ref{zPrimeRepeat}), we easily obtain the formula%
\begin{equation}
z^{\prime\prime}(\tau)=-\frac{1}{4}\left(  \frac{F_{zz}}{F_{z}}\right)
^{3}+\frac{1}{2}\frac{F_{zz}}{F_{z}}\frac{F_{zzz}}{F_{z}}-\frac{1}{4}%
\frac{F_{zzzz}}{F_{z}}, \label{zDoublePrime}%
\end{equation}
where the right-hand side is evaluated at $z=z(\tau).$

We now write%
\[
F(z(\tau),\tau)=\left(  1-\frac{z}{z(\tau)}\right)  F^{\mathrm{reg}}%
(z(\tau),\tau),
\]
where $F^{\mathrm{reg}}$ is the \textquotedblleft regular\textquotedblright%
\ part of $F,$ meaning that it is nonzero at $(z(\tau),\tau).$ Next, we verify
that for $m\geq1,$ we have%
\[
F^{(m)}(z,\tau)=-\frac{1}{z(\tau)}m(F^{\mathrm{reg}})^{(m-1)}(z,\tau)+\left(
1-\frac{z}{z(\tau)}\right)  (F^{\mathrm{reg}})^{(m)}(z,\tau),
\]
where $F^{(m)}$ is the $m$th derivative of $F$ with respect to $z.$ It follows
that%
\begin{equation}
\frac{F^{(m)}(z(\tau),\tau)}{F_{z}(z(\tau),\tau)}=m\frac{(F^{\mathrm{reg}%
})^{(m-1)}(z(\tau),\tau)}{F^{\mathrm{reg}}(z(\tau),\tau)}. \label{FmOverFz}%
\end{equation}
We then use (\ref{FmOverFz}) in each term of (\ref{zDoublePrime}), giving
\begin{equation}
z^{\prime\prime}(\tau)=-\left(  \frac{2(F_{z}^{\mathrm{reg}})^{3}%
-3F^{\mathrm{reg}}F_{z}^{\mathrm{reg}}F_{zz}^{\mathrm{reg}}+(F^{\mathrm{reg}%
})^{2}F_{zzz}^{\mathrm{reg}}}{(F^{\mathrm{reg}})^{3}}\right)  .
\label{zDouble2}%
\end{equation}
Finally, if we write $F^{\mathrm{reg}}=e^{H^{\mathrm{reg}}},$ a calculation
shows that (\ref{zDouble2}) simplifies to
\begin{equation}
z^{\prime\prime}(\tau)=-H_{zzz}^{\mathrm{reg}}. \label{zDoubleHreg}%
\end{equation}

We now use Hadamard factorization of $F(z(\tau),\tau)$ in each of the three
cases S0, S1, and S2. If we consider, say, the (S2) case and take the base point
$c$ to be 0 for notational simplicity, we find that
\begin{align}
H^{\mathrm{reg}}(z,\tau)  &  =a_{0}+a_{1}z+\frac{1}{2}a_{2}z^{2}\nonumber\\
&  +\sum_{w\in\mathbb{C},~F(z,\tau)=0}\left(  \mathbf{1}_{\{w\neq z(\tau
)\}}\log\left(  1-\frac{z}{w}\right)  +\frac{z}{w}+\frac{1}{2}\frac{z^{2}%
}{w^{2}}\right)  . \label{Hreg}%
\end{align}
When we take the third derivative of $H^{\mathrm{reg}}$ with respect to $z,$
most of the terms in (\ref{Hreg}) become zero and (\ref{zDoubleHreg}) reduces
to the claimed expression for $z^{\prime\prime}(\tau).$ The cases (S0) and S1
are entirely similar.
\end{proof}

We conclude this section by considering the rescaled zeros of the GAF, as in our main Theorem \ref{theo:GAF_invariant_generalization_zeros_introduction}. We consider heat-evolved GAF $G(z,t)$ for
\textit{real} $\tau$ between $(-1,1),$ and then rescale the zeros by dividing
by $\sqrt{1-\tau^{2}}.$ The corollary tells us that these rescaled zeros have
the same distribution as the zeros of the original GAF. We now obtain a
differential equation for the rescaled zeros.

\begin{proposition}
[Derivatives of the rescaled GAF zeros]For real values of $\tau$ between $-1$
and $1$ enumerate the zeros of the heat evolved GAF as $\{z_{j}(\tau)\}_{j}$
and let $\{y_{j}(\tau)\}_{j}$ be the rescaled zeros%
\[
y_{j}(\tau)=\frac{z_{j}(\tau)}{\sqrt{1-\tau^{2}}},\quad-1<\tau<1.
\]
Let $a_{1}(\tau)$ and $a_{2}(\tau)$ be as in Theorem \ref{S2deriv.thm} with
$c=0$ and introduce a new time variable%
\[
s=\tanh^{-1}t.
\]
Then if $z_{j}(\tau)$ is a simple zero of $G(z,\tau),$ we have the following
formula for the $s$-derivative of the rescaled zero $y_{j}(s):$
\begin{align}
\frac{dy_{j}}{ds}  &  =\frac{a_{1}(s)}{\cosh s}+y_{j}(s)\left(  \frac
{a_{2}(s)}{\cosh^{2}s}+\tanh s\right) \nonumber\\
&  +\sum_{k}\left(  \frac{\mathbf{1}_{\{k\neq j\}}}{y_{j}(s)-y_{k}(s)}%
+\frac{1}{y_{k}(s)}+\frac{y_{j}(s)}{y_{k}(s)^{2}}\right)  . \label{dyds}%
\end{align}

\end{proposition}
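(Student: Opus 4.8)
The plan is to reduce the proposition to the first-derivative formula already established for the (S2) case in Theorem~\ref{S2deriv.thm}, and then to pass to the variables $s=\tanh^{-1}\tau$ and $y_j=z_j/\sqrt{1-\tau^2}$ by a plain application of the chain rule; no new analytic input is required beyond Theorem~\ref{S2deriv.thm} and Proposition~\ref{diffZero.prop}.

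First I would record the starting point. The plane GAF has order $\rho=2$ and type $\sigma=1/2$, so for real $\tau\in(-1,1)$ we have $|\tau|<1=1/(2\sigma)$, and with probability one $0$ is not a zero of $\mathrm{e}^{-\tau D^2/2}G$ for a fixed $\tau$, so that $a_1(\tau),a_2(\tau)$ from Theorem~\ref{S2deriv.thm} with $c=0$ are well defined near the relevant parameter value. Wherever $z_j(\tau)$ is a simple zero of $G(z,\tau)$, Proposition~\ref{diffZero.prop} shows it extends holomorphically in $\tau$ nearby, and \eqref{zzPrime2} gives
\[
\frac{dz_j}{d\tau}=a_1(\tau)+a_2(\tau)\,z_j(\tau)+\sum_{k}\left(\frac{\mathbf{1}_{\{k\neq j\}}}{z_j(\tau)-z_k(\tau)}+\frac{1}{z_k(\tau)}+\frac{z_j(\tau)}{z_k(\tau)^2}\right),
\]
the sum running over all zeros with multiplicity and convergent by the (S2) bound.

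Next I would change variables. With $\tau=\tanh s$ one has $\sqrt{1-\tau^2}=1/\cosh s$ and $d\tau/ds=1/\cosh^2 s$, so $y_j(s)=z_j(\tanh s)\cosh s$ and the chain rule together with the product rule yields
\[
\frac{dy_j}{ds}=\frac{dz_j}{d\tau}\cdot\frac{1}{\cosh^2 s}\cdot\cosh s+z_j\sinh s=\frac{1}{\cosh s}\,\frac{dz_j}{d\tau}+y_j\tanh s,
\]
using $z_j\sinh s=(z_j\cosh s)\tanh s=y_j\tanh s$ in the last step. It then remains to substitute the displayed expression for $dz_j/d\tau$ and to rewrite every zero $z_k$ through $y_k=z_k\cosh s$. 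Term by term: $\frac{1}{\cosh s}a_1=\frac{a_1}{\cosh s}$, $\frac{1}{\cosh s}\,a_2 z_j=\frac{a_2 y_j}{\cosh^2 s}$, $\frac{1}{\cosh s}\cdot\frac{1}{z_j-z_k}=\frac{1}{y_j-y_k}$, $\frac{1}{\cosh s}\cdot\frac{1}{z_k}=\frac{1}{y_k}$, and $\frac{1}{\cosh s}\cdot\frac{z_j}{z_k^2}=\frac{y_j}{y_k^2}$; collecting the coefficient of $y_j$ (the $a_2$-contribution together with the $y_j\tanh s$ produced by differentiating the factor $\cosh s$) gives $y_j\bigl(a_2/\cosh^2 s+\tanh s\bigr)$, and one arrives at exactly \eqref{dyds}, with the convention $a_i(s)=a_i(\tanh s)$.

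I do not expect a genuine obstacle: the argument is essentially bookkeeping. The one point that deserves care is that the rescaling $z_k\mapsto y_k$ maps the convergent three-term combination $\frac{\mathbf{1}_{\{k\neq j\}}}{z_j-z_k}+\frac{1}{z_k}+\frac{z_j}{z_k^2}$ to the same three-term combination in the $y_k$, each summand being multiplied only by a fixed power of $\cosh s$ (absorbing the rescalings of $z_j$ and $z_k$) and the common factor $1/\cosh s$; hence the rescaled series converges and the term-by-term manipulation above is legitimate with no rearrangement issue. A secondary, conceptual caveat --- already noted after Theorem~\ref{theo:distributionZeros} --- is that the holomorphic branch $z_j(\tau)$ need not persist on all of $(-1,1)$, so the identity \eqref{dyds} is to be read pointwise, at each $s$ for which $z_j(\tanh s)$ is a simple zero.
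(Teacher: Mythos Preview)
Your proposal is correct and follows essentially the same approach as the paper: both apply the chain rule to $y_j(s)=z_j(\tanh s)\cosh s$ to obtain $\frac{dy_j}{ds}=\frac{z_j'(\tanh s)}{\cosh s}+y_j\tanh s$, and then substitute the (S2) first-derivative formula \eqref{zzPrime2} with $c=0$, converting each $z_k$ to $y_k/\cosh s$ term by term. Your write-up is slightly more explicit about why the hypotheses of Theorem~\ref{S2deriv.thm} are met and why the rescaled series still converges, which is a welcome addition but does not change the argument.
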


Note that the sum over $k$ on the right-hand side of (\ref{dyds}) has exactly
the same form as the sum in (\ref{zzPrime2}) (with $c=0$).

\begin{proof}
We first observe that, in terms of the new time variable $s,$ we have%
\[
\frac{1}{\sqrt{1-\tau^{2}}}=\cosh s,
\]
so that%
\[
y_{j}(s)=z_{j}(\tanh s)\cosh s.
\]
Then we compute that
\begin{align}
y_{j}^{\prime}(s)  &  =\frac{z_{j}^{\prime}(\tanh s)}{\cosh s}+z_{j}(\tanh
s)\sinh s\nonumber\\
&  =\frac{z_{j}^{\prime}(\tanh s)}{\cosh s}+y_{j}(\tanh s)\tanh s.
\label{yjPrime.eq}%
\end{align}

We then use (\ref{zzPrime2}) (with $c=0$) to compute the first term on the
right-hand side of (\ref{yjPrime.eq}):
\begin{align*}
\frac{z_{j}^{\prime}(\tanh s)}{\cosh s}  &  =\frac{1}{\cosh s}\sum_{k}\left(
\frac{\mathbf{1}_{k\neq j}}{z_{j}(\tanh s)-z_{k}(\tanh s)}+\frac{1}%
{z_{k}(\tanh s)}+\frac{z_{j}(\tanh s)}{z_{k}(\tanh s)^{2}}\right) \\
&  +\frac{1}{\cosh s}a_{1}(\tanh s)+\frac{1}{\cosh s}a_{2}(\tanh s)z_{j}(\tanh
s)\\
&  =\sum_{k}\left(  \frac{\mathbf{1}_{k\neq j}}{y_{j}(s)-z_{k}(s)}+\frac
{1}{y_{k}(s)}+\frac{y_{j}(s)}{y_{k}(s)^{2}}\right) \\
&  +a_{1}(\tanh s)\operatorname{sech}s+y_{j}(s)a_{2}(\tanh
s)\operatorname{sech}^{2}s.
\end{align*}
Combining this result with the second term on the right-hand side of
(\ref{yjPrime.eq}) gives the claimed result.
\end{proof}

\subsection{Iterating the equations}

Let us consider, for definiteness, the (S2) case, and let us assume that
\textit{all} the zeros of $F(z,\tau)$ are simple. Then we can enumerate them
as $\{z_{j}(\tau)\}_{j}$ and apply Theorem \ref{S2deriv.thm} to every zero,
giving
\begin{align}
a_{1}^{\prime}(\tau)  &  =-a_{1}(\tau)a_{2}(\tau)+\sum_{k}\frac{1}{(z_{k}%
(\tau)-c)^{3}}\label{ODE1}\\
a_{2}^{\prime}(\tau)  &  =-a_{2}(\tau)^{2}+2a_{1}(\tau)\sum_{k}\frac{1}%
{(z_{k}(\tau)-c)^{3}}+3\sum_{k}\frac{1}{(z_{k}(\tau)-c)^{4}}\label{ODE2}\\
z_{j}^{\prime}(\tau)  &  =a_{1}(\tau)+a_{2}(\tau)(z(\tau)-c)\nonumber\\
&  +\sum_{k}\left(  \frac{\mathbf{1}_{\{k\neq j\}}}{z_{j}(\tau)-z_{k}(\tau
)}+\frac{1}{z_{k}(\tau)-c}+\frac{z_{j}(\tau)-c}{(z_{k}(\tau)-c)^{2}}\right)  .
\label{ODE3}%
\end{align}
We would then like to think of (\ref{ODE1}), (\ref{ODE2}), and (\ref{ODE3}) as
an infinite system of ODEs for the quantities $a_{1},$ $a_{2},$ and
$\{z_{j}\}_{j}.$ Indeed, we can consider this system apart from any connection
to the heat flow, and prove a uniqueness result as follows.

\begin{theorem}
\label{uniqueSoln.thm}Suppose $a_{1}(\cdot)$, $a_{2}(\cdot)$, and
$\{z_{j}(\cdot)\}_{j}$ are holomorphic functions defined on a disk $D$ in the
plane. Then we say that these functions constitute a solution to
(\ref{ODE1})--(\ref{ODE3}) on $D$ if the quantities $z_{j}(\tau)$ are distinct
for each $\tau\in D$ and these three equations hold for every $\tau\in D,$
with locally uniform convergence of the series on the right-hand side of each
equation. Suppose that $a_{1}(\cdot),$ $a_{2}(\cdot),$ and $\{z_{j}%
(\cdot)\}_{j}$ and $\hat{a}_{1}(\cdot),$ $\hat{a}_{2}(\cdot),$ and $\{\hat
{z}_{j}(\cdot)\}_{j}$ are two solutions to the system that agree at some
$\tau_{0}\in D.$ Then the two solutions agree everywhere on $D.$
\end{theorem}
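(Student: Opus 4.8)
The plan is to turn any solution of \eqref{ODE1}--\eqref{ODE3} into a genuine heat-evolved entire function and then to invoke uniqueness of holomorphic continuation in the time variable. After a translation we may assume $c=0$. Given a solution $(a_1,a_2,\{z_j\})$ on $D$, first introduce an auxiliary holomorphic function $a_0$ on $D$ by setting $a_0(\tau_0)=0$ and $a_0'=-\tfrac12(a_1^2+a_2)$; this is the evolution that $a_0(\tau)=\log F(0,\tau)$ is forced to have, cf.\ \eqref{calcFzzOverF} evaluated at $z=0$. Then put
\[
F(z,\tau):=\mathrm{e}^{a_0(\tau)+a_1(\tau)z+\frac12 a_2(\tau)z^2}\prod_k\Bigl(1-\frac{z}{z_k(\tau)}\Bigr)\mathrm{e}^{z/z_k(\tau)+\frac12 z^2/z_k(\tau)^2},
\]
the analogue of the Hadamard factorization \eqref{Hadamard3}. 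Since $\bigl|(1-w)\mathrm{e}^{w+w^2/2}-1\bigr|=O(|w|^3)$ for $|w|\le\tfrac12$, and since local uniform convergence of the series in \eqref{ODE1}--\eqref{ODE3} implies that $\sum_k|z_k(\tau)|^{-3}$ converges locally uniformly on $D$ while only finitely many $z_k(\tau)$ lie in any fixed bounded set (locally uniformly in $\tau$), the product converges locally uniformly on $\mathbb C\times D$. Thus $F$ is holomorphic on $\mathbb C\times D$, entire in $z$ for each $\tau$, with zero set $\{z_j(\tau)\}_j$ and $\partial_1\log F(0,\tau)=a_1(\tau)$, $\partial_1^2\log F(0,\tau)=a_2(\tau)$.

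The heart of the argument is to check that $F$ solves the backward heat equation $\partial_\tau F=-\tfrac12\partial_z^2 F$ on $\mathbb C\times D$. Away from the zeros, with $H=\log F$, this is equivalent to $\partial_\tau H=-\tfrac12(H_z^2+H_{zz})$. Expand $H$ in powers of $z$ about $z=0$ as in \eqref{LogH2}: $H=a_0+a_1z+\tfrac12 a_2z^2-\sum_{m\ge3}\tfrac1m p_m z^m$ with $p_m(\tau):=\sum_k z_k(\tau)^{-m}$, the series converging locally uniformly. Matching the coefficients of $z^0,z^1,z^2$ recovers, respectively, the defining ODE for $a_0$ and the equations \eqref{ODE1} and \eqref{ODE2}. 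Matching the coefficient of $z^m$ for $m\ge3$ gives an identity for $p_m'(\tau)$; differentiating $p_m=\sum_k z_k^{-m}$ in $\tau$ and substituting \eqref{ODE3} shows that this identity follows from \eqref{ODE3}. This is exactly the computation performed, in the reverse direction, in the proofs of Theorems~\ref{S1deriv.thm}, \ref{S2deriv.thm}, and \ref{secDeriv.thm}; the delicate point is that the rearrangements and termwise $\tau$-differentiations (in particular of the double series arising from $\partial_\tau p_m$) are legitimate, which rests on the local uniform convergence built into the notion of a solution. Granting this, all Taylor coefficients at $z=0$ of the entire function $\partial_\tau F+\tfrac12\partial_z^2 F$ vanish, so $F$ satisfies the heat equation.

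Differentiating the heat equation repeatedly in $\tau$ gives $\partial_\tau^k F=(-\tfrac12)^k\partial_z^{2k}F$ for all $k\ge0$; in particular $\partial_\tau^k F(\cdot,\tau_0)$ is determined by $F(\cdot,\tau_0)$. Now let $(a_1,a_2,\{z_j\})$ and $(\hat a_1,\hat a_2,\{\hat z_j\})$ be two solutions agreeing at $\tau_0$, with associated $F$ and $\hat F$ built as above using $a_0(\tau_0)=\hat a_0(\tau_0)=0$. Then $F(\cdot,\tau_0)=\hat F(\cdot,\tau_0)$, hence $\partial_\tau^k F(z,\tau_0)=\partial_\tau^k\hat F(z,\tau_0)$ for every $k$ and $z$. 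For each fixed $z\in\mathbb C$ the holomorphic function $\tau\mapsto F(z,\tau)-\hat F(z,\tau)$ on the connected open set $D$ thus vanishes to infinite order at $\tau_0$, and so vanishes identically. Therefore $F\equiv\hat F$ on $\mathbb C\times D$. Comparing zero sets gives $\{z_j(\tau)\}_j=\{\hat z_j(\tau)\}_j$ as multisets for every $\tau$; the canonical products over this common multiset then coincide, so the ratio $F/\hat F=\mathrm e^{(a_0-\hat a_0)+(a_1-\hat a_1)z+\frac12(a_2-\hat a_2)z^2}$ is identically $1$ in $z$, forcing $a_1\equiv\hat a_1$ and $a_2\equiv\hat a_2$ on $D$. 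Finally, since the $\hat z_\ell(\tau)$ are pairwise distinct, the index $\ell$ with $\hat z_\ell(\tau)=z_j(\tau)$ is locally constant in $\tau$ and equals $j$ at $\tau_0$, hence equals $j$ on all of $D$ by connectedness; so $z_j\equiv\hat z_j$ for each $j$. The (S0) and (S1) systems are treated identically, using \eqref{Hadamard1}--\eqref{Hadamard2} and Theorems~\ref{S0deriv.thm}--\ref{S1deriv.thm}, with $a_0$ (and in the (S0) case also $a_1$) absent.

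The step I expect to be the main obstacle is the heat-equation verification above: running the computations behind \eqref{zzPrime2}, \eqref{a1Prime2}, \eqref{a2Prime2} backwards and, above all, justifying the termwise differentiation and rearrangement of the series that appear under only the hypothesized local uniform convergence. The other ingredients -- convergence of the canonical product, the identity theorem in the $\tau$-variable, and matching zero sets and exponential prefactors -- are routine.
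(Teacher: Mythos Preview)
Your approach is genuinely different from the paper's, and considerably more elaborate. The paper's proof is a two-line argument: since the right-hand sides of \eqref{ODE1}--\eqref{ODE3} are locally uniformly convergent series of holomorphic functions of $\tau$, they may be differentiated term by term any number of times; doing so $m-1$ times and evaluating at $\tau_0$ expresses $a_1^{(m)}(\tau_0)$, $a_2^{(m)}(\tau_0)$, $z_j^{(m)}(\tau_0)$ in terms of derivatives of order at most $m-1$. Induction then shows that all Taylor coefficients at $\tau_0$ are determined by the initial data, and the identity theorem finishes the proof. No auxiliary entire function, no Hadamard product, no heat equation.

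Your route---rebuilding the Hadamard product $F(z,\tau)$ from the solution and showing it satisfies $\partial_\tau F=-\tfrac12\partial_z^2 F$---is conceptually attractive because it explains \emph{why} uniqueness holds (the data encode a heat-evolved entire function). But it carries a gap you dismissed as routine: the genus-$2$ canonical product requires $\sum_k|z_k(\tau)|^{-3}<\infty$, and this does \emph{not} follow from the hypothesis. The theorem only assumes that the particular series appearing on the right of \eqref{ODE1}--\eqref{ODE3} converge locally uniformly in $\tau$; for instance $\sum_k(z_k(\tau)-c)^{-3}$ may converge conditionally without $\sum_k|z_k(\tau)|^{-3}$ being finite (take $z_k$ on a spiral with $|z_k|\sim k^{1/3}$ and phases arranged to give cancellation). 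Without absolute convergence the product $\prod_k(1-z/z_k)e^{z/z_k+z^2/(2z_k^2)}$ need not converge, and your $F$ is not defined. The difficulty you \emph{did} flag---justifying the rearrangement of the double series in $\partial_\tau p_m$---is also real (indeed the paper devotes Sections~\ref{S0S1.sec}--\ref{S2.sec} to exactly this kind of manipulation, and has to pass through finite truncations to make it rigorous), so even granting absolute convergence your heat-equation verification would need substantially more work. The paper's direct argument sidesteps both issues by never leaving the ODE system: it differentiates only the three series whose locally uniform convergence is hypothesized.
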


\begin{proof}
A locally uniformly convergent series of analytic functions can be
differentiated any number of times. Differentiating each of (\ref{ODE1}%
)--(\ref{ODE3}) $m-1$ times and evaluating at $\tau=\tau_{0}$ expresses
$a_{1}^{(m)}(\tau_{0}),$ $a_{2}^{(m)}(\tau_{0}),$ and $z_{j}^{(m)}(\tau_{0})$
in terms of $a_{1}^{(l)}(\tau_{0}),$ $a_{2}^{(l)}(\tau_{0}),$ and $z_{j}%
^{(l)}(\tau_{0})$ for $0\leq l\leq m-1.$ Thus, inductively, all the
derivatives of $a_{1}(\cdot),$ $a_{2}(\cdot),$ and $\{z_{j}(\cdot)\}_{j}$ at
$\tau_{0}$ agree with the corresponding derivatives of $\hat{a}_{1}(\cdot),$
$\hat{a}_{2}(\cdot),$ and $\{\hat{z}_{j}(\cdot)\}_{j}$ at $\tau_{0}.$
\end{proof}

Similar theorems hold also in the (S0) and (S1) cases. There is, however, a
difficulty in applying these theorems---in any of the three cases---to
functions obtained from the heat flow, as explained in the following remark.

\begin{remark}
Suppose that $F(z,\tau)$ is as in Theorem \ref{S2deriv.thm}, that all the
zeros of $F(z,\tau_{0})$ are simple, and that $F(z,\tau_{0})$ has infinitely
many zeros. Then it may be that for every disk $D$ around $\tau_{0},$ there
exists $\tau\in D$ for which some zero of $F(z,\tau)$ is not simple. Thus, if
the functions $z_{j}(\cdot)$ are the zeros of $F(z,\tau)$ and $a_{1}(\cdot)$
and $a_{2}(\cdot)$ are as in Theorem \ref{S2deriv.thm}, there may be no single
disk $D$ on which each function is defined and holomorphic, in which case,
Theorem \ref{uniqueSoln.thm} will not apply.
\end{remark}

Despite the difficulty described in the remark, there is a sense in which we
can iterate the equations (\ref{ODE1})--(\ref{ODE3}), similarly to the proof
of Theorem \ref{uniqueSoln.thm} to obtain a uniqueness result. Now, we should
point out that if (in, say, the (S2) case), the quantities $a_{1},$ $a_{2}$, and
$\{z_{j}\}_{j=1}^{\infty}$ are known at time $\tau_{0},$ then the function
$F(z,\tau_{0})$ can be recovered up to a constant as a Hadamard product. Then
the we can apply the heat flow and uniquely determine the function $F(z,\tau)$
for any $\tau,$ so that $a_{1}(\tau),$ $a_{2}(\tau),$ and $\{z_{j}%
(\tau)\}_{j=1}^{\infty}$ are uniquely determined for all $\tau.$ But this
argument does not provide any mechanism for actually \textit{computing} the
zeros at time $\tau.$ By contrast, Theorem \ref{trunc.thm} and the
computations in Sections \ref{S0S1.sec} and \ref{S2.sec} give us
\textit{explicit} formulas for all the derivatives of each $z_{j}(\tau)$ at
$\tau=\tau_{0},$ in terms of the initial values of $\{z_{k}(\tau_{0}%
)\}_{k=1}^{\infty}$ (and possibly $a_{1}(\tau_{0})$ and $a_{2}(\tau_{0}),$
depending on the case).

\begin{theorem}
[Truncation and iteration theorem]\label{trunc.thm}Suppose $F$ is as in one of
Theorems \ref{S0deriv.thm}, \ref{S1deriv.thm}, or \ref{S2deriv.thm} and
suppose that, for some $\tau_{0},$ $F(z,\tau_{0})$ has infinitely many zeros,
each of which is simple, listed in some fixed order as $\{z_{j}(\tau
_{0})\}_{j=1}^{\infty}.$ Consider the appropriate Hadamard product
representation of $F(z,\tau_{0}),$ as in (\ref{Hadamard1}), (\ref{Hadamard2}),
or (\ref{Hadamard3}), respectively. Let $F^{N}(z,\tau_{0})$ be obtained by
truncating the product after $N$ factors, and let
\begin{equation}
F^{N}(z,\tau)=e^{-\frac{(\tau-\tau_{0})}{2}D^{2}}F^{N}(z,\tau_{0})
\label{FNdef}%
\end{equation}
for $\tau$ sufficiently close to $\tau_{0}.$ Finally, for $j\leq N,$ let
$z_{j}^{N}(\tau)$ denote the zero of $F^{N}(z,\tau)$ that equals $z_{j}%
^{N}(\tau_{0})$ at $\tau=\tau_{0}.$ Then for all $m\geq1,$ we have%
\[
z_{j}^{(m)}(\tau_{0})=\lim_{N\rightarrow\infty}(z_{j}^{N})^{(m)}(\tau_{0}).
\]
Furthermore, $(z_{j}^{N})^{(m)}(\tau_{0})$ may be computed by iterating the
appropriate system of ordinary differential equations, as in (\ref{ODE1}%
)--(\ref{ODE3}) in the (S2) case.
\end{theorem}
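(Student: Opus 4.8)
The plan is to reduce everything to a single \emph{universal recursion} that is satisfied, near the tracked zero, by $F$ and by all of its truncations $F^{N}$ alike, and then to notice that the initial data of this recursion at time $\tau_{0}$ are Hadamard series whose $F^{N}$-counterparts are nothing but the $N$-th partial sums.

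First I would record two preliminaries. By Lemma~\ref{lem:heat_flow_properties1}(4), $F(z,\tau)$ and every $F^{N}(z,\tau)$ solve the backward heat equation $\partial_{\tau}F=-\tfrac12 F_{zz}$; and $F^{N}(\cdot,\tau_{0})$, being a polynomial times the exponential of a linear or quadratic function, has order $\le2$ and finite type, so by Proposition~\ref{diffZero.prop} the function $F^{N}(z,\tau)$ and its zero $z_{j}^{N}(\tau)$ with $z_{j}^{N}(\tau_{0})=z_{j}(\tau_{0})$ are defined and holomorphic for $\tau$ in a disk about $\tau_{0}$ (possibly $N$-dependent and shrinking as $N\to\infty$, which is harmless since only the derivatives at $\tau_{0}$ matter). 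Now put $H^{\mathrm{reg}}(z,\tau):=\log\bigl(F(z,\tau)/(z-z_{j}(\tau))\bigr)$ and
\[
c_{\ell}(\tau):=\partial_{z}^{\ell}H^{\mathrm{reg}}\bigl(z_{j}(\tau),\tau\bigr),\qquad\ell\ge1 ,
\]
so that \eqref{zPrime2} reads $z_{j}'(\tau)=c_{1}(\tau)$. Writing $F=(z-z_{j}(\tau))\,\mathrm{e}^{H^{\mathrm{reg}}}$ and differentiating in $\tau$ with the aid of the heat equation gives, by the same manipulation as behind \eqref{calcFzzOverF} and \eqref{GzTau},
\[
\partial_{\tau}H^{\mathrm{reg}}=-\tfrac12\bigl((H^{\mathrm{reg}}_{z})^{2}+H^{\mathrm{reg}}_{zz}\bigr)+\frac{z_{j}'(\tau)-H^{\mathrm{reg}}_{z}}{z-z_{j}(\tau)} ,
\]
the last term having a removable singularity at $z=z_{j}(\tau)$. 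Applying $\partial_{z}^{\ell}$, evaluating at $z=z_{j}(\tau)$ and using $z_{j}'=c_{1}$ produces a closed recursion
\[
c_{\ell}'(\tau)=c_{1}(\tau)\,c_{\ell+1}(\tau)+P_{\ell}\bigl(c_{1}(\tau),\dots,c_{\ell+2}(\tau)\bigr) ,
\]
with $P_{\ell}$ a \emph{fixed} polynomial that does not depend on $F$. The identical identities hold with $F,z_{j},c_{\ell}$ replaced by $F^{N},z_{j}^{N},c_{\ell}^{N}$, because $F^{N}$ solves the same heat equation. Iterating this recursion at $\tau=\tau_{0}$ therefore yields, for every $m\ge1$, a single universal polynomial $\Phi_{m}$ in at most $2m-1$ variables with
\[
z_{j}^{(m)}(\tau_{0})=\Phi_{m}\bigl(c_{1}(\tau_{0}),\dots,c_{2m-1}(\tau_{0})\bigr),\qquad (z_{j}^{N})^{(m)}(\tau_{0})=\Phi_{m}\bigl(c_{1}^{N}(\tau_{0}),\dots,c_{2m-1}^{N}(\tau_{0})\bigr) .
\]

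Next I would evaluate $c_{\ell}(\tau_{0})$ and $c_{\ell}^{N}(\tau_{0})$ via the Hadamard factorization \eqref{Hadamard1}, \eqref{Hadamard2} or \eqref{Hadamard3} appropriate to the situation (S0), (S1) or (S2): $F^{N}(\cdot,\tau_{0})$ has the same factorization with the same exponential prefactor and with the product restricted to the factors $k\le N$, so $c_{\ell}^{N}(\tau_{0})$ is obtained from the formula for $c_{\ell}(\tau_{0})$ by truncating one series after its $N$-th term. For $\ell\ge3$ that series is $(-1)^{\ell-1}(\ell-1)!\sum_{k\ne j}(z_{j}(\tau_{0})-z_{k}(\tau_{0}))^{-\ell}$, absolutely convergent since $F(\cdot,\tau_{0})$ has order $\le2$ and hence $\sum_{k}|z_{k}(\tau_{0})|^{-3}<\infty$; for the few smallest $\ell$ the superficially divergent pieces $\sum 1/z_{k}$, $\sum 1/z_{k}^{2}$ recombine with the $a_{i}$-terms into absolutely convergent series, exactly as in the proofs of Theorems~\ref{S1deriv.thm} and \ref{S2deriv.thm} (for instance $c_{1}(\tau_{0})=a_{1}+a_{2}z_{j}(\tau_{0})+z_{j}(\tau_{0})^{2}\sum_{k\ne j}\bigl((z_{j}(\tau_{0})-z_{k}(\tau_{0}))\,z_{k}(\tau_{0})^{2}\bigr)^{-1}$ in case (S2)). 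In all three situations the relevant series is absolutely convergent, so $c_{\ell}^{N}(\tau_{0})\to c_{\ell}(\tau_{0})$ for each fixed $\ell$, and continuity of $\Phi_{m}$ gives $(z_{j}^{N})^{(m)}(\tau_{0})\to z_{j}^{(m)}(\tau_{0})$, which is the first assertion. For the second assertion, note that all zeros of $F^{N}(\cdot,\tau_{0})$ are simple (since all zeros of $F(\cdot,\tau_{0})$ are), so Theorem~\ref{S2deriv.thm} (respectively \ref{S1deriv.thm}, \ref{S0deriv.thm}) applies to each of them and produces the \emph{finite} system \eqref{ODE1}--\eqref{ODE3}; differentiating that system repeatedly and evaluating at $\tau_{0}$, exactly as in the proof of Theorem~\ref{uniqueSoln.thm}, computes all $(z_{j}^{N})^{(m)}(\tau_{0})$, and a bookkeeping check identifies this with the same evaluation of $\Phi_{m}$.

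I expect the main obstacle to be establishing the \emph{universality and index-boundedness} of the recursion $c_{\ell}\mapsto c_{\ell}'$: one must confirm that a single $\tau$-differentiation raises the largest index that appears by at most two, and that the resulting polynomial in the $c_{i}$'s is insensitive to the global nature of $F$, so that literally the \emph{same} polynomial $\Phi_{m}$ governs $F$ and every $F^{N}$. Two supporting checks are also delicate: that $c_{\ell}^{N}(\tau_{0})$ is honestly the $N$-th partial sum of the series for $c_{\ell}(\tau_{0})$ --- here the constant discrepancy between the $(1-z/z_{j})$-normalized Hadamard factor and the $(z-z_{j})$-normalized quantity $H^{\mathrm{reg}}$ must be seen to affect only $c_{0}$ and hence nothing --- and that the series for the lowest $c_{\ell}$ genuinely converge after recombination, for which one reuses verbatim the convergence estimates already carried out in Theorems~\ref{S1deriv.thm} and \ref{S2deriv.thm}.
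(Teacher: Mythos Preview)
Your argument is correct and follows the same overall strategy as the paper: show that $z_j^{(m)}(\tau_0)$ is the value of a \emph{universal} polynomial (independent of $F$) in finitely many pieces of local data of $F$ at the point $(z_j(\tau_0),\tau_0)$, then argue that these pieces of data for $F^N$ converge to those for $F$. The difference is purely in the choice of local data and in the convergence argument. The paper differentiates the implicit equation $F(z_j(\tau),\tau)=0$ directly and obtains $z_j^{(m)}(\tau_0)=P_m\bigl(F_{zz}/F_z,\dots,F^{(2m)}/F_z\bigr)\big|_{(z_j(\tau_0),\tau_0)}$; convergence of these ratios then follows in one stroke from the locally uniform convergence of the truncated Hadamard product to $F(\cdot,\tau_0)$, which carries all $z$-derivatives along and keeps $F_z^N$ bounded away from zero since $z_j(\tau_0)$ is simple. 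You instead take as local data the Taylor coefficients $c_\ell=\partial_z^\ell\log\bigl(F/(z-z_j)\bigr)\big|_{z=z_j}$, derive a closed recursion $c_\ell'=c_1c_{\ell+1}+P_\ell(c_1,\dots,c_{\ell+2})$ from the heat equation, and identify $c_\ell^N(\tau_0)$ as the $N$-th partial sum of the explicit Hadamard series for $c_\ell(\tau_0)$. Your route is longer but more computational: it makes visible the exact index bound (each $\tau$-derivative raises the top index by two, giving the $2m-1$), and it dovetails directly with the moment formulas developed in the subsequent sections. The paper's route is shorter and avoids any casework on small $\ell$, since locally uniform convergence of entire functions handles all derivatives uniformly without needing to recombine the $a_i$-terms with the tails of $\sum 1/z_k$ and $\sum 1/z_k^2$.
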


The theorem, together with the computations in Sections \ref{S0S1.sec} and
\ref{S2.sec}, will allow us to compute \textit{all} the derivatives of each
$z_{j}(\tau)$ at $\tau=\tau_{0}.$ Thus, each $z_{j}(\tau)$ can be recovered
uniquely from these formulas on its disk of convergence around $\tau_{0}.$
Note that although there may be no \textit{single} disk around $\tau_{0}$ on
which every $z_{j}(\tau)$ is defined, Proposition \ref{diffZero.prop}
guarantees that for each fixed $j,$ there is a disk of radius $r_{j}$ around
$\tau_{0}$ on which $z_{j}(\tau)$ is defined and holomorphic. (We just have no
reason to expect that the $r_{j}$'s are bounded away from zero as $j$ varies.)

We will examine the computation of the higher derivatives in detail in
Sections \ref{S0S1.sec} and \ref{S2.sec}. For now, we consider just one
example, the computation of the third derivative of $z_{j}(\tau)$ in the (S0) or
S1 case. We start by applying Theorem \ref{secDeriv.thm} to the computation of
$(z_{j}^{N})^{\prime\prime}$ and then use Theorem \ref{S0deriv.thm} or Theorem
\ref{S1deriv.thm} to compute one more derivative:
\begin{align}
z_{j}^{\prime\prime\prime}(\tau_{0})  &  =6\lim_{N\rightarrow\infty}\sum
_{k=1}^{N}\frac{\mathbf{1}_{\{k\neq j\}}}{(z_{j}(\tau_{0})-z_{k}(\tau
_{0}))^{4}}\left(  (z_{j}^{N})^{\prime}(\tau_{0})-(z_{k}^{N})^{\prime}%
(\tau_{0})\right) \label{zTriple1}\\
&  =6\lim_{N\rightarrow\infty}\sum_{k=1}^{N}\frac{\mathbf{1}_{\{k\neq j\}}%
}{(z_{j}(\tau_{0})-z_{k}(\tau_{0}))^{4}}\nonumber\\
&  \times\sum_{l=1}^{N}\left(  \frac{\mathbf{1}_{\{l\neq j\}}}{z_{j}(\tau
_{0})-z_{l}(\tau_{0})}-\frac{\mathbf{1}_{\{l\neq k\}}}{z_{k}(\tau_{0}%
)-z_{l}(\tau_{0})}\right)  , \label{zTriple2}%
\end{align}
where in the (S1) case, we note that the $a_{1}(\tau)$ and $1/(w-c)$ terms in
the formulas for $(z_{j}^{N})^{\prime}$ and $(z_{k}^{N})^{\prime}$ cancel.

On the right-hand sides of (\ref{zTriple1}) and (\ref{zTriple2}), we must let
both occurrences of $N$ tend to infinity simultaneously. In (\ref{zTriple1}),
for example, it is \textit{not} correct to let the $N$ in $(z_{j}^{N}%
)^{\prime}$ and $(z_{k}^{N})^{\prime}$ tend to infinity first and then let the
$N$ in the sum tend to infinity, which would give%
\begin{equation}
z_{j}^{\prime\prime\prime}(\tau_{0})\overset{??}{=}6\sum_{k=1}^{\infty}%
\frac{\mathbf{1}_{\{k\neq j\}}}{(z_{j}(\tau_{0})-z_{k}(\tau_{0}))^{4}}\left(
z_{j}^{\prime}(\tau_{0})-z_{k}^{\prime}(\tau_{0})\right)  . \label{sumDiverge}%
\end{equation}
The sum on the right-hand side of (\ref{sumDiverge}) may not converge, because
the quantities $z_{k}^{\prime}(\tau_{0})$ can grow very rapidly as a function
of $k.$

On the other hand, the formula (\ref{MNprime}) in Theorem
\ref{momentDeriv.thm} will show that the finite sums on the right-hand side of
(\ref{zTriple2}) can be simplified to give the following expression for
$z_{j}^{\prime\prime\prime}(\tau_{0})$:%
\begin{align*}
z_{j}^{\prime\prime\prime}(\tau_{0})  &  =\lim_{N\rightarrow\infty}\left[
18\sum_{k=1}^{N}\frac{\mathbf{1}_{\{k\neq j\}}}{(z_{j}(\tau_{0})-z_{k}%
(\tau_{0}))^{5}}\right. \\
&  -6\sum_{k=1}^{N}\frac{\mathbf{1}_{\{k\neq j\}}}{(z_{j}(\tau_{0})-z_{k}%
(\tau_{0}))^{2}}\left.  \sum_{k=1}^{N}\frac{\mathbf{1}_{\{k\neq j\}}}%
{(z_{j}(\tau_{0})-z_{k}(\tau_{0}))^{3}}\right]  .
\end{align*}
At this point, we can evaluate the limit to obtain an expression for
$z_{j}^{\prime\prime\prime}(\tau_{0})$ involving only sums that are convergent
in the (S0) or (S1) case:%
\begin{align*}
z_{j}^{\prime\prime\prime}(\tau_{0})  &  =18\sum_{k=1}^{\infty}\frac
{\mathbf{1}_{\{k\neq j\}}}{(z_{j}(\tau_{0})-z_{k}(\tau_{0}))^{5}}\\
&  -6\sum_{k=1}^{\infty}\frac{\mathbf{1}_{\{k\neq j\}}}{(z_{j}(\tau_{0}%
)-z_{k}(\tau_{0}))^{2}}\sum_{k=1}^{\infty}\frac{\mathbf{1}_{\{k\neq j\}}%
}{(z_{j}(\tau_{0})-z_{k}(\tau_{0}))^{3}}.
\end{align*}

\begin{proof}
[Proof of Theorem \ref{trunc.thm}]It is, in principle, possible to compute all
higher derivatives of $z_{j}(\tau),$ $a_{1}(\tau),$ and $a_{2}(\tau)$ at
$\tau_{0},$ by the same method as in the proofs of the theorems in Section
\ref{FirstSecond.sec}. The result will always be an expression that is
continuous with respect to locally uniform convergence.

In the case of $z_{j}(\cdot),$ for example, we may apply $m$ derivatives in
$\tau$ to the identity $F(z_{j}(\tau),\tau)=0$ and then evaluate at $\tau
=\tau_{0}.$ We will get exactly one term involving $z_{j}^{(m)}(\tau_{0}),$
namely%
\[
F_{z}(z_{j}(\tau_{0}),\tau_{0})z_{j}^{(m)}(\tau_{0}).
\]
All the other terms will be combinations of derivatives of $F$ and lower
derivatives of $z_{j}.$ Working inductively, we will get an expression of the
form%
\[
z_{j}^{(m)}(\tau_{0})=\left.  P_{m}\left(  \frac{F_{zz}}{F_{z}},\ldots
,\frac{F^{(2m)}}{F_{z}}\right)  \right\vert _{\tau=\tau_{0},~z=z_{j}(\tau
_{0}),},
\]
where $P_{m}$ is a polynomial. (Compare (\ref{zDouble2}) in the case $m=2.$)

When we replace $F$ by $F^{N},$ we appeal to the locally uniform convergence
of the Hadamard product, which implies convergence of all derivatives. Since
the zeros at time $\tau_{0}$ are simple, $F_{z}(z_{j}(\tau_{0}),\tau_{0})$ and
$F_{z}^{N}(z_{j}(\tau_{0}),\tau_{0})$ are nonzero and we conclude that
$(z_{j}^{N})^{(m)}(\tau_{0})$ converges to $z_{j}^{(m)}(\tau_{0}).$
\end{proof}

\subsection{Case (S1)\label{S0S1.sec}}

In this section, we apply Theorem \ref{trunc.thm} in the (S1) case, including the case (S0). We
give an algorithm by which we can compute all the higher derivatives
$z_{j}^{(m)}(\tau_{0})$ explicitly as functions of the quantities
$\{z_{k}(\tau_{0})\}_{k=1}^{\infty}.$ 
In particular, the formulas for
the higher derivatives of $z_{j}$ in the (S1) case do not involve $a_{1}%
(\tau_{0}).$
The formulas for these derivatives will be the same in
the (S0) and (S1) cases, except when $m=1,$ where there are two extra terms in
Theorem \ref{S1deriv.thm} as compared to Theorem \ref{S0deriv.thm}.

\begin{definition}
\label{mj.def}For each $p\geq2,$ and each $j,$ we define%
\[
M^{N}(j,p,\tau)=\sum_{k=1}^{N}\frac{\mathbf{1}_{\{j\neq k\}}}{(z_{j}%
(\tau)-z_{k}(\tau))^{p}}.
\]
We then define an $N=\infty$ version of $M^{N},$ but only evaluated at
$\tau=\tau_{0}$:%
\[
M(j,p)=\sum_{k=1}^{\infty}\frac{\mathbf{1}_{\{j\neq k\}}}{(z_{j}(\tau
_{0})-z_{k}(\tau_{0}))^{p}},
\]
where the series is convergent for $p\geq2$ in the (S0) and (S1) cases.
\end{definition}

Note that in the notation of Definition \ref{mj.def}, Theorem
\ref{secDeriv.thm} (applied to $z_{j}^{N}(\tau)$) tells us that%
\begin{equation}
(z_{j}^{N})^{\prime\prime}(\tau)=-2M^{N}(j,3,\tau). \label{zDoubleM3}%
\end{equation}

\begin{theorem}
\label{momentDeriv.thm}In the (S0) and (S1) cases (Theorems \ref{S0deriv.thm} and
\ref{S1deriv.thm}), we have
\begin{align}
&  M^{N}(j,p,\tau)^{\prime}\nonumber\\
&  =-\frac{p}{2}\left(  (p+3)M^{N}(j,p+2,\tau)-\sum_{n=2}^{p}M^{N}%
(j,p+2-n,\tau)M^{N}(j,n,\tau)\right)  , \label{MNprime}%
\end{align}
where the prime indicates differentiation with respect to $\tau.$ We may then
apply this result repeatedly to compute $(z_{j}^{N})^{(m)}(\tau_{0}),$
starting from (\ref{zDoubleM3}). After applying Theorem \ref{trunc.thm}, we
will then obtain an expression of the form%
\[
z_{j}^{(m)}(\tau_{0})=Q_{m}(M(j,2),\ldots,M(j,2m-1)),
\]
where $Q_{m}$ is a polynomial. In particular, we have%
\begin{equation}
z_{j}^{\prime\prime\prime}(\tau_{0})=18M(j,5)-6M(j,2)M(j,3). \label{zTripleS1}%
\end{equation}

\end{theorem}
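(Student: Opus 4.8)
The plan is to prove the recursion (\ref{MNprime}) by a direct computation with the truncated functions $F^{N}$, and then to deduce the remaining assertions by induction together with Theorem \ref{trunc.thm}.

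\emph{Reduction to a double sum.} Fix $\tau$ near $\tau_{0}$, abbreviate $d_{ab} = z_{a}^{N}(\tau) - z_{b}^{N}(\tau)$, and note that all the functions $M^{N}(j,q,\cdot)$ are holomorphic near $\tau_{0}$ because the zeros $z_{k}^{N}$ stay distinct there. Differentiating $M^{N}(j,p,\tau) = \sum_{k\neq j} d_{jk}^{-p}$ and inserting the first-derivative formula of Theorem \ref{S0deriv.thm} (when $F$ is in (S0)) or Theorem \ref{S1deriv.thm} (when $F$ is in (S1))---in the latter case the $a_{1}(\tau)$ term and the $\sum 1/(w-c)$ term drop out of the difference $(z_{j}^{N})' - (z_{k}^{N})'$---one obtains in both cases, using $d_{jl}^{-1} - d_{kl}^{-1} = -d_{jk}\,d_{jl}^{-1}d_{kl}^{-1}$,
\[
(z_{j}^{N})'(\tau) - (z_{k}^{N})'(\tau) = \frac{2}{d_{jk}} - d_{jk}\sum_{l\neq j,k}\frac{1}{d_{jl}\,d_{kl}}.
\]
Substituting this into $M^{N}(j,p,\tau)' = -p\sum_{k\neq j} d_{jk}^{-(p+1)}\bigl((z_{j}^{N})' - (z_{k}^{N})'\bigr)$ splits it as $-2p\,M^{N}(j,p+2,\tau)$ plus $p\,S$, where $S = \sum_{k\neq j}\sum_{l\neq j,k} d_{jk}^{-p}d_{jl}^{-1}d_{kl}^{-1}$.

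\emph{Evaluating $S$.} This is the step I expect to be the only genuine obstacle, and it dissolves after one symmetrization. Viewing $S$ as a sum over ordered pairs $(k,l)$ with $k\neq l$ and $k,l\neq j$, renaming $k\leftrightarrow l$ (and using $d_{lk} = -d_{kl}$) gives a second expression for $S$ whose sum with the first is
\[
2S = \sum_{(k,l)}\frac{1}{d_{kl}\,d_{jk}\,d_{jl}}\left(\frac{1}{d_{jk}^{p-1}} - \frac{1}{d_{jl}^{p-1}}\right).
\]
Since $d_{jl} - d_{jk} = d_{kl}$, the factorization $d_{jl}^{p-1} - d_{jk}^{p-1} = d_{kl}\sum_{i=0}^{p-2} d_{jl}^{i}d_{jk}^{p-2-i}$ makes the troublesome factor $d_{kl}$ cancel, leaving $2S = \sum_{i=0}^{p-2}\sum_{(k,l)} d_{jk}^{-(i+2)}d_{jl}^{-(p-i)}$. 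Adding and then removing the diagonal term $k=l$ turns the inner sum into $M^{N}(j,i+2,\tau)M^{N}(j,p-i,\tau) - M^{N}(j,p+2,\tau)$; reindexing $n = i+2$ gives $2S = \sum_{n=2}^{p}M^{N}(j,n,\tau)M^{N}(j,p+2-n,\tau) - (p-1)M^{N}(j,p+2,\tau)$. Feeding this back and collecting the $M^{N}(j,p+2,\tau)$ terms, whose coefficient is $-2p - \tfrac{p(p-1)}{2} = -\tfrac{p(p+3)}{2}$, yields precisely (\ref{MNprime}).

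\emph{Iteration and passage to the limit.} Starting from $(z_{j}^{N})''(\tau) = -2M^{N}(j,3,\tau)$, which is (\ref{zDoubleM3}) (Theorem \ref{secDeriv.thm} for the function $F^{N}$), and differentiating repeatedly, (\ref{MNprime}) together with the Leibniz rule shows by induction that $(z_{j}^{N})^{(m)}(\tau)$ is a polynomial---with coefficients independent of $N$, $F$, and $j$---in the quantities $M^{N}(j,q,\tau)$; since each use of (\ref{MNprime}) raises the largest index present by $2$, only $2\leq q\leq 2m-1$ occur, which also shows the higher derivatives do not involve $a_{1}(\tau_{0})$. Evaluating at $\tau_{0}$, Theorem \ref{trunc.thm} gives $z_{j}^{(m)}(\tau_{0}) = \lim_{N\to\infty}(z_{j}^{N})^{(m)}(\tau_{0})$, while in the (S0)/(S1) cases the exponent of convergence of the zeros is strictly less than $2$, so $M^{N}(j,q,\tau_{0})\to M(j,q)$ for every $q\geq 2$ (as recorded in Definition \ref{mj.def}); continuity of the polynomial then gives $z_{j}^{(m)}(\tau_{0}) = Q_{m}(M(j,2),\ldots,M(j,2m-1))$. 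Finally, the case $p=3$ of (\ref{MNprime}) reads $M^{N}(j,3,\tau)' = -9M^{N}(j,5,\tau) + 3M^{N}(j,2,\tau)M^{N}(j,3,\tau)$, so $(z_{j}^{N})'''(\tau_{0}) = -2M^{N}(j,3,\tau_{0})' = 18M^{N}(j,5,\tau_{0}) - 6M^{N}(j,2,\tau_{0})M^{N}(j,3,\tau_{0})$, and letting $N\to\infty$ gives (\ref{zTripleS1}).
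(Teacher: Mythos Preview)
Your proof is correct and follows essentially the same route as the paper: express $(z_j^N)'-(z_k^N)'$ via the first-derivative formulas (noting the (S1) extras cancel in the difference), symmetrize the resulting double sum in $k,l$, factor $d_{jl}^{p-1}-d_{jk}^{p-1}$ to cancel the $d_{kl}$, and then add/subtract the diagonal to rewrite everything in terms of the moments $M^N(j,q)$. Your iteration-and-limit argument and the $p=3$ specialization are likewise the same as in the paper, only written out a bit more explicitly.
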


\begin{proof}
It is convenient to use the notation%
\[
\gamma_{jk}=1-\delta_{jk}=\left\{
\begin{array}
[c]{rr}%
1 & j\neq k\\
0 & j=k
\end{array}
\right.  .
\]
Since $N$ is fixed and finite throughout the proof, we omit the superscript
\textquotedblleft$N$\textquotedblright\ on the variables and we assume that
all sums over indices $k$ and $l$ will range from 1 to $N.$ We also do not
explicitly indicate the dependence of each variable on $\tau.$

We these conventions, we begin by computing $z_{j}^{\prime}-z_{k}^{\prime}$ in
the (S0) and (S1) cases using Theorem \ref{S0deriv.thm} and \ref{S1deriv.thm}. The
result is the same in both cases, because the extra terms in Theorem
\ref{S1deriv.thm} are the same for $z_{j}^{\prime}$ as for $z_{k}^{\prime}.$
We then get%
\begin{align}
z_{j}^{\prime}-z_{k}^{\prime}  &  =\sum_{l}\frac{\gamma_{lj}}{z_{j}-z_{l}%
}-\sum_{l}\frac{\gamma_{lk}}{z_{k}-z_{l}}\nonumber\\
&  =\frac{1}{z_{j}-z_{k}}+\sum_{l}\frac{\gamma_{lj}\gamma_{lk}}{z_{j}-z_{l}%
}-\frac{1}{z_{k}-z_{j}}-\sum_{l}\frac{\gamma_{lk}\gamma_{lj}}{z_{k}-z_{l}%
}\nonumber\\
&  =\frac{2}{z_{j}-z_{k}}+\sum_{l}\gamma_{lj}\gamma_{lk}\left(  \frac{1}%
{z_{j}-z_{l}}-\frac{1}{z_{k}-z_{l}}\right)  . \label{DerivativeDiff.eq}%
\end{align}

We then compute, using (\ref{DerivativeDiff.eq}), that
\begin{align}
\frac{d}{d\tau}M^{N}(j,p)  &  =-p\sum_{k}\frac{\gamma_{jk}}{(z_{j}%
-z_{k})^{p+1}}(z_{j}^{\prime}-z_{k}^{\prime})\nonumber\\
&  =-2p\sum_{k}\frac{\gamma_{jk}}{(z_{j,N}-z_{k,N})^{p+2}}\nonumber\\
&  +p\sum_{k,l}\frac{\gamma_{jk}\gamma_{jl}\gamma_{kl}}{(z_{j,N}-z_{k,N})^{p}%
}\frac{1}{(z_{j,N}-z_{l,N})(z_{k,N}-z_{l,N})} \label{SumDerivativeMoments.eq}%
\end{align}
In the second sum on the right-hand side of (\ref{SumDerivativeMoments.eq}),
we symmetrize the summand with respect to $k$ and $l$:
\begin{align*}
&  \sum_{k,l}\frac{\gamma_{jk}\gamma_{jl}\gamma_{kl}}{(z_{j,N}-z_{k,N})^{p}%
}\frac{1}{(z_{j,N}-z_{l,N})(z_{k,N}-z_{l,N})}\\
&  =\frac{1}{2}\sum_{k,l}\gamma_{jk}\gamma_{jl}\gamma_{kl}\frac{(z_{j}%
-z_{l})^{p-1}-(z_{j}-z_{k})^{p-1}}{(z_{j}-z_{k})^{p}(z_{j}-z_{l})^{p}%
(z_{k}-z_{l})},
\end{align*}
which further simplifies to
\begin{align}
&  \sum_{k,l}\frac{\gamma_{jk}\gamma_{jl}\gamma_{kl}}{(z_{j,N}-z_{k,N})^{p}%
}\frac{1}{(z_{j,N}-z_{l,N})(z_{k,N}-z_{l,N})}\nonumber\\
&  =\frac{1}{2}\sum_{k,l}\gamma_{jk}\gamma_{jl}\gamma_{kl}\frac{(z_{j}%
-z_{l})^{p-2}+(z_{j}-z_{l})^{p-3}(z_{j}-z_{l})+\ldots+(z_{j}-z_{k})^{p-2}%
}{(z_{j}-z_{k})^{p}(z_{j}-z_{l})^{p}}. \label{doubleSum}%
\end{align}

Now, in each sum on the right-hand side of (\ref{doubleSum}), the factor of
$\gamma_{kl}$ means that the term with $l=k$ is omitted. Since there is now no
factor of $z_{k}-z_{l}$ in the denominator, we can add and subtract a term
with $l=k.$ Thus, we get a double sum without the factor of $\gamma_{kl}$
minus a single sum with $l=k.$ The single sums are actually all the same and
we obtain
\begin{align*}
&  \sum_{k,l}\frac{\gamma_{jk}\gamma_{jl}\gamma_{kl}}{(z_{j,N}-z_{k,N})^{p}%
}\frac{1}{(z_{j,N}-z_{l,N})(z_{k,N}-z_{l,N})}\\
&  =\frac{1}{2}\sum_{n=2}^{p}\sum_{k}\frac{\gamma_{jk}}{(z_{j}-z_{k})^{p-n+2}%
}\sum_{k}\frac{\gamma_{jk}}{(z_{j}-z_{k})^{n}}-\frac{p-1}{2}\sum_{k}%
\frac{\gamma_{jk}}{(z_{j}-z_{k})^{p+2}}.
\end{align*}
Putting this computation into (\ref{SumDerivativeMoments.eq}) and recalling
the definition of $M^{N}(j,p,\tau)$ gives (\ref{MNprime}).
\end{proof}

\subsection{Case (S2)\label{S2.sec}}

In the (S2) case, we again apply Theorem \ref{trunc.thm}; that is, we compute
derivatives of $z_{j}^{N}(\tau)$ at $\tau=\tau_{0}$ and then let $N$ tend to
infinity. To compute the derivatives of $z_{j}^{N}(\tau),$ we consider the
function $F^{N}(z,\tau)$ in (\ref{FNdef}) as a truncated version of the
Hadamard product in (\ref{Hadamard3}). We can then write the truncated product
in the form%
\[
F^{N}(z,\tau)=\exp\left\{  a_{0}^{N}+b(\tau)z+\frac{1}{2}a(\tau)z^{2}\right\}
p^{N}(z,\tau),
\]
where%
\begin{align}
b^{N}(\tau)  &  =a_{1}^{N}(\tau)+\sum_{k=1}^{N}\frac{1}{z_{k}^{N}(\tau
)}\label{bdef}\\
a^{N}(\tau)  &  =a_{2}^{N}(\tau)+\sum_{k=1}^{N}\frac{1}{z_{k}^{N}(\tau)^{2}%
}\label{adef}\\
p^{N}(z,\tau)  &  =\prod_{k}\left(  1-\frac{z}{z_{k}^{N}(\tau)}\right)  .
\label{pdef}%
\end{align}

We then let $\{z_{j}^{N}(\tau)\}_{j=1}^{N}$ denote the zeros of $F^{N}%
(z,\tau)$ and $\{Z_{j}^{N}(\tau)\}_{j=1}^{N}$ denote the zeros of
$p^{N}(z,\tau).$ Using Proposition \ref{prop:heat_flow_and_mult_by_exp} and
the definition (\ref{FNdef}) of $F^{N},$ we find that%
\begin{equation}
z_{j}^{N}(\tau)=(\tau-\tau_{0})b^{N}(\tau_{0})+(1+(\tau-\tau_{0})a^{N}%
(\tau_{0}))Z_{j}^{N}\left(  \frac{\tau-\tau_{0}}{1+(\tau-\tau_{0})a^{N}%
(\tau_{0})}\right)  , \label{zjAndZj}%
\end{equation}
where $b^{N}$ and $a^{N}$ are as in (\ref{bdef}) and (\ref{adef}),
respectively. We can then relate derivatives of $z_{j}(\tau)$ at $\tau
=\tau_{0}$ to the corresponding derivatives of $Z_{j}(\tau).$ We compute just
one example explicitly, to illustrate the method.

\begin{proposition}
In the (S2) case, we have%
\begin{align*}
z_{j}^{\prime\prime\prime}(\tau_{0})  &  =-3a_{2}(\tau_{0})-12\sum
_{k=1}^{\infty}\frac{\mathbf{1}_{\{k\neq j\}}}{(z_{j}(\tau_{0})-z_{k}(\tau
_{0}))^{5}}\\
&  +6\sum_{k=1}^{\infty}\left(  \frac{\mathbf{1}_{\{k\neq j\}}}{(z_{j}%
(\tau_{0})-z_{k}(\tau_{0}))^{2}}-\frac{1}{z_{k}(\tau_{0})^{2}}\right)  \left(
\sum_{k=1}^{\infty}\frac{1}{(z_{j}(\tau_{0})-z_{k}(\tau_{0}))^{3}}\right)  .
\end{align*}

\end{proposition}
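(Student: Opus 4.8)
The plan is to run the truncation-and-iteration machinery of Theorem~\ref{trunc.thm} through the explicit reparametrization~\eqref{zjAndZj}, which expresses the zero $z_j^N(\tau)$ of the truncated function $F^N$ in terms of the heat-flowed zero $Z_j^N$ of its polynomial part $p^N$; here $Z_j^N(v)$ denotes the zero of $\mathrm{e}^{-vD^2/2}p^N(\cdot,\tau_0)$ that equals $z_j(\tau_0)$ at $v=0$. By Theorem~\ref{trunc.thm} it suffices to compute $(z_j^N)'''(\tau_0)$ and then let $N\to\infty$.

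First I would differentiate~\eqref{zjAndZj} three times at $\tau=\tau_0$. Writing $s=\tau-\tau_0$, $a=a^N(\tau_0)$, $b=b^N(\tau_0)$, and $\varphi(s)=\frac{s}{1+as}=s-as^2+a^2s^3+O(s^4)$, and substituting into $z_j^N(\tau_0+s)=sb+(1+as)\,Z_j^N(\varphi(s))$, the contributions of the factor $1+as$ and of the nonlinear part of $\varphi$ largely cancel: the $s^2$-coefficient of $z_j^N$ equals that of $Z_j^N$ (this is why the second-derivative formula of Theorem~\ref{secDeriv.thm} does not see the exponential prefactor), while its $s^3$-coefficient equals $\frac{1}{6}(Z_j^N)'''(0)-\frac{a}{2}(Z_j^N)''(0)$. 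Hence
\[
(z_j^N)'''(\tau_0)=(Z_j^N)'''(0)-3\,a^N(\tau_0)\,(Z_j^N)''(0).
\]

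Since $p^N$ is a polynomial, hence of order $0$ and covered by case~(S0), I would then substitute the polynomial-case formulas for its heat flow, coming from Theorems~\ref{secDeriv.thm} and~\ref{momentDeriv.thm} (that is, \eqref{zDoubleM3} together with~\eqref{MNprime} at $p=3$, equivalently~\eqref{zTripleS1}):
\[
(Z_j^N)''(0)=-2\,M^N(j,3,\tau_0),\qquad (Z_j^N)'''(0)=18\,M^N(j,5,\tau_0)-6\,M^N(j,2,\tau_0)\,M^N(j,3,\tau_0),
\]
in the notation of Definition~\ref{mj.def}. Inserting also $a^N(\tau_0)=a_2^N(\tau_0)+\sum_{k=1}^N z_k(\tau_0)^{-2}$ from~\eqref{adef}---and noting $a_2^N(\tau_0)=a_2(\tau_0)$, since truncating the Hadamard product leaves the quadratic coefficient of its logarithm unchanged---expresses $(z_j^N)'''(\tau_0)$ as an explicit combination of $M^N(j,2,\tau_0)$, $M^N(j,3,\tau_0)$, $M^N(j,5,\tau_0)$, and $\sum_{k=1}^N z_k(\tau_0)^{-2}$.

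The final step is the limit $N\to\infty$. The sums $M^N(j,3,\tau_0)$ and $M^N(j,5,\tau_0)$ converge absolutely, because an entire function of order $2$ has $\sum_k|z_k|^{-3}<\infty$; however $M^N(j,2,\tau_0)$ and $\sum_{k=1}^N z_k(\tau_0)^{-2}$ diverge individually and must be kept grouped, the telescoping estimate $(z_j-z_k)^{-2}-z_k^{-2}=O(z_k^{-3})$ showing that their difference converges to $\sum_k\bigl(\mathbf{1}_{\{k\ne j\}}(z_j(\tau_0)-z_k(\tau_0))^{-2}-z_k(\tau_0)^{-2}\bigr)$. Collecting the surviving convergent sums gives the claimed identity. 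I expect the main obstacle to be exactly this bookkeeping---organizing the third-derivative computation so that, after the substitutions, only convergent sums remain (this is why Theorem~\ref{trunc.thm} requires the two occurrences of $N$ to go to infinity simultaneously)---rather than any new conceptual ingredient.
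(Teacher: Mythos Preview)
Your proposal is correct and follows essentially the same route as the paper: differentiate the reparametrization~\eqref{zjAndZj} to obtain $(z_j^N)'''(\tau_0)=(Z_j^N)'''(0)-3a^N(\tau_0)(Z_j^N)''(0)$, insert the (S0)/polynomial formulas \eqref{zDoubleM3} and \eqref{zTripleS1} for the derivatives of $Z_j^N$, expand $a^N(\tau_0)$ via \eqref{adef}, and pass to the limit after grouping the two individually divergent sums $M^N(j,2,\tau_0)$ and $\sum_k z_k(\tau_0)^{-2}$ into the convergent difference. Your write-up is in fact slightly more explicit than the paper's in justifying the Taylor computation of the third derivative and in spelling out why $a_2^N(\tau_0)=a_2(\tau_0)$ and why the regrouped sum converges in case~(S2).
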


It is interesting to compare this result to the computation of $z_{j}%
^{\prime\prime\prime}(\tau_{0})$ in the (S0) and (S1) cases, in (\ref{zTripleS1}).
The formula in the (S2) case differs by the presence of the leading term
involving $a_{2}(\tau_{0})$ and the replacement of $M(j,2)$ by a regularized
version of this sum.

\begin{proof}
We first compute using (\ref{zjAndZj}) that%
\[
(z_{j}^{N})^{\prime\prime\prime}=-3a^{N}(\tau_{0})(Z_{j}^{N})^{\prime\prime
}+(Z_{j}^{N})^{\prime\prime\prime}.
\]
We then compute the derivatives of $Z_{j}^{N}$ using results from the (S0) case,
to obtain%
\begin{align*}
(z_{j}^{N})^{^{\prime\prime\prime}}  &  =-3\left(  a_{2}^{N}(\tau_{0}%
)+\sum_{k=1}^{N}\frac{1}{z_{k}(\tau_{0})^{2}}\right)  2M^{N}(j,3,\tau_{0})\\
&  -12M^{N}(j,5,\tau_{0})+6M^{N}(j,2,\tau_{0})M^{N}(j,3,\tau_{0}),
\end{align*}
which simplifies to
\begin{align*}
(z_{j}^{N})^{^{\prime\prime\prime}}  &  =-3a_{2}^{N}(\tau_{0})-12M^{N}%
(j,5,\tau_{0})\\
&  +6M^{N}(j,3,\tau_{0})\sum_{k=1}^{N}\left(  \frac{\mathbf{1}_{\{k\neq j\}}%
}{(z_{j}(\tau_{0})-z_{k}(\tau_{0}))^{2}}-\frac{1}{z_{k}(\tau_{0})^{2}}\right)
.
\end{align*}
Letting $N$ tend to infinity gives the claimed result.
\end{proof}

\appendix{}

\section{The metaplectic representation\label{meta.appendix}}

In this appendix, we review some results about the metaplectic representation.
We mostly follow Chapter 4 of the book \cite{folland_book}, in the case $n=1.$

\addtocontents{toc}{\protect\setcounter{tocdepth}{1}}
\subsection{The group $SL(2;\mathbb{R})$ in classical mechanics}

In this section, we give a very brief review of certain parts of classical
mechanics, for a particle moving in $\mathbb{R}.$ See, for example,
\cite[Chapter 2]{HallQM} for more information. The construction described here
provide motivation for the construction of the metaplectic representation in
subsequent sections.

When studying the classical mechanics of a particle moving in $\mathbb{R}$,
one considers the \textquotedblleft phase space\textquotedblright%
\ $\mathbb{R}^{2},$ thought of as the set of pairs $(x,p)$, with $x$ being the
position of the particle and with $p$ being the associated momentum. Given a
smooth \textquotedblleft Hamiltonian\textquotedblright\ function
$H:\mathbb{R}^{2}\rightarrow\mathbb{R},$ we define a time-evolution in
$\mathbb{R}^{2}$ by Hamilton's equations:%
\begin{equation}
\frac{dx}{ds}=\frac{\partial H}{\partial p}(x(s),p(s));\quad\frac{dp}%
{ds}=-\frac{\partial H}{\partial x}(x(s),p(s)). \label{HamEq}%
\end{equation}
If, for example,
\[
H(x,p)=\frac{p^{2}}{2m}+V(x),
\]
where $m$ is the mass of the particle and $V$ is the potential energy, we get%
\[
\frac{dx}{ds}=\frac{p}{m};\quad\frac{dp}{ds}=-\frac{\partial V}{\partial x}.
\]
Then, by differentiating the formula for $dx/ds$ with respect to $s,$ we get%
\[
m\frac{d^{2}x}{ds^{2}}=-V^{\prime}(x),
\]
which is equivalent to Newton's second law $F=ma,$ with the force $F$ give by
$F(x)=-V^{\prime}(x).$

In the general setting, if we define the \textquotedblleft Poisson
bracket\textquotedblright\ of two smooth functions on $\mathbb{R}^{2}$ by
\[
\{f,g\}=\frac{\partial f}{\partial x}\frac{\partial g}{\partial p}%
-\frac{\partial f}{\partial p}\frac{\partial g}{\partial x},
\]
then we have the basic identity that for any smooth function $f$ and any
solution $(x(s),p(s))$ of Hamilton's equations, we have%
\[
\frac{d}{ds}f(x(s),p(s))=\{f,H\}(x(s),p(s)).
\]
The proof is just the chain rule:%
\[
\frac{df}{ds}=\frac{\partial f}{\partial x}\frac{dx}{ds}+\frac{\partial
f}{\partial p}\frac{dp}{ds}=\frac{\partial f}{\partial x}\frac{\partial
H}{\partial p}-\frac{\partial f}{\partial p}\frac{\partial H}{\partial
x}=\{f,H\}.
\]

The most basic example of the Poisson bracket is this:%
\[
\{x,p\}=1.
\]
We observe that the space of homogeneous polynomials of degree 2---spanned by
$\frac{1}{2}p^{2},$ $\frac{1}{2}x^{2},$ and $xp$---forms a Lie algebra under
the Poisson bracket:%
\begin{align*}
\left\{  xp,\frac{1}{2}p^{2}\right\}   &  =2\left(  \frac{1}{2}p^{2}\right) \\
\left\{  xp,\frac{1}{2}x^{2}\right\}   &  =-2\left(  \frac{1}{2}x^{2}\right)
\\
\left\{  \frac{1}{2}x^{2},\frac{1}{2}p^{2}\right\}   &  =xp
\end{align*}
and this Lie algebra is isomorphic to $sl(2;\mathbb{R}).$ The commutations
relations for the preceding basis elements are the same as the relations for
the standard basis of $sl(2;\mathbb{R)}$:%
\[
\left(
\begin{array}
[c]{rr}%
0 & 1\\
0 & 0
\end{array}
\right)  ;\quad\left(
\begin{array}
[c]{rr}%
0 & 0\\
1 & 0
\end{array}
\right)  ;\quad\left(
\begin{array}
[c]{rr}%
1 & 0\\
0 & -1
\end{array}
\right)  .
\]
The connection to the group $SL(2;\mathbb{R})$ is not coincidental, as we now demonstrate.

If $f$ is a smooth function on $\mathbb{R}^{2},$ we define the
\textbf{Hamiltonian flow} $\Phi_{f}$ associated to $f$ to be the flow on
$\mathbb{R}^{2}$ obtained by solving Hamilton's equations (\ref{HamEq}) with
Hamiltonian $H=-f.$ (The minus sign ensures that if $X_{f}$ is the vector
field generating $\Phi_{f}$---the infinitesimal flow---then $[X_{f}%
,X_{g}]=X_{\{f,g\}}.$) If $f$ is a homogeneous polynomial of degree 2 in $x$
and $p,$ then Hamilton's equations are linear and $\Phi_{f}(s)$ will be a
one-parameter group of linear transformations with determinant 1.

We should note at this point that $SL(2;\mathbb{R})$ is equal to the group of
$2\times2$ matrices preserving the skew-symmetric bilinear form $\omega$ given
by%
\[
\omega((x_{1},y_{1}),(x_{2},y_{2}))=x_{1}y_{2}-y_{1}x_{2},
\]
which is the \textbf{symplectic group} of $\mathbb{R}^{2},$ denoted either
$Sp(1;\mathbb{R})$ or $Sp(2;\mathbb{R})$ depending on the author. In
$\mathbb{R}^{2n},$ for $n\geq2,$ the homogeneous polynomials of degree $2$
form a Lie algebra under the natural Poisson bracket---and this Lie algebra is
isomorphic to the Lie algebra of the symplectic group of $\mathbb{R}^{2n}$ and
\textit{not} to the Lie algebra of $SL(2n;\mathbb{R}).$

We consider two basic examples.

\begin{example}
\label{classicalRot.ex}If we consider $f(x,p)=\frac{1}{2}(x^{2}+p^{2}),$ then
Hamilton's equations (\ref{HamEq}) with Hamiltonian $-f$ read%
\[
\frac{dx}{ds}=-p;\quad\frac{dp}{ds}=x.
\]
The solutions are easily verified to be:
\begin{align*}
x(s)  &  =\cos(s)x_{0}-\sin(s)p_{0}\\
p(s)  &  =\sin(s)x_{0}+\cos(s)p_{0}.
\end{align*}
Thus, the Hamiltonian flow $\Phi_{f}$ is given by counter-clockwise rotations
of the initial position and momentum:%
\begin{equation}
\left(
\begin{array}
[c]{r}%
x(s)\\
p(s)
\end{array}
\right)  =\left(
\begin{array}
[c]{rr}%
\cos s & -\sin s\\
\sin s & \cos s
\end{array}
\right)  \left(
\begin{array}
[c]{r}%
x_{0}\\
p_{0}%
\end{array}
\right)  . \label{ClassicalRotate}%
\end{equation}

\end{example}

\begin{example}
\label{classicalDil.ex}If we consider $f(x,p)=xp,$ then Hamilton's equations
(\ref{HamEq}) with Hamiltonian $-f$ read
\[
\frac{dx}{ds}=-x;\quad\frac{dp}{ds}=p.
\]
The solutions are easily verified to be:%
\begin{align*}
x(s)  &  =x_{0}e^{-s}\\
p(s)  &  =p_{0}e^{s}.
\end{align*}
Thus, the Hamiltonian flow $\Phi_{f}$ is given by the action of the positive
diagonal subgroup of $SL(2;\mathbb{R)}$:%
\begin{equation}
\left(
\begin{array}
[c]{r}%
x(s)\\
p(s)
\end{array}
\right)  =\left(
\begin{array}
[c]{rr}%
e^{-s} & 0\\
0 & e^{s}%
\end{array}
\right)  \left(
\begin{array}
[c]{r}%
x_{0}\\
p_{0}%
\end{array}
\right)  . \label{diagClassical}%
\end{equation}

\end{example}

\subsection{The group $SL(2;\mathbb{R})$ in quantum
mechanics\label{metaL2R.sec}}

In quantum mechanics for a particle moving in $\mathbb{R}$, one builds a
Hilbert space $\mathbf{H}$ (the \textquotedblleft quantum Hilbert
space\textquotedblright) and then attempts to associate to each smooth
function $f$ on $\mathbb{R}^{2}$ a self-adjoint operator $Q(f)$ on
$\mathbf{H}.$ (See Chapters 3 and 13 of \cite{HallQM}.) The map $Q$ from
functions to operators is supposed to satisfy%
\begin{equation}
Q(1)=I\text{ (the identity operator)} \label{QofOne}%
\end{equation}
and, as much as possible, the relation%
\begin{equation}
Q(\{f,g\})=-i[Q(f),Q(g)], \label{comm}%
\end{equation}
where $[\cdot,\cdot]$ is the commutator of operators, defined by
\[
\lbrack A,B]=AB-BA.
\]
(In (\ref{comm}), we work in units where Planck's constant equals 1.) As it
turns out, there is no \textquotedblleft reasonable\textquotedblright\ map $Q$
satisfying (\ref{QofOne}) and (\ref{comm}); see Section 13.4 of \cite{HallQM}.
Nevertheless, we hope to achieve (\ref{QofOne}) and (\ref{comm}) on
\textit{some} space of functions, which in our case will be the space of
polynomials in $p$ and $q$ of degree at most 2.

Typically, the Hilbert space $\mathbf{H}$ is taken to be $L^{2}(\mathbb{R})$
and we begin by defining
\begin{align}
Q(1)  &  =I\label{1op}\\
Q(x)  &  =X:=\text{multiplication by }x\label{Xop}\\
Q(p)  &  =P:=-i\frac{d}{dx}. \label{Pop}%
\end{align}
We check that
\[
-i[X,P]=I.
\]
We then define%
\begin{equation}
Q(x^{2})=X^{2};\quad Q(p^{2})=P^{2};\quad Q(xp)=\frac{1}{2}(XP+PX).
\label{3ops}%
\end{equation}
Note that we cannot take $Q(xp)=XP$ because $XP$ is not self-adjoint. But
\begin{equation}
\frac{1}{2}(XP+PX)=-ix\frac{d}{dx}-\frac{i}{2} \label{xppx}%
\end{equation}
is self-adjoint.

It is then an elementary computation to verify that the commutation relations
in (\ref{comm}) do hold (on, say, the Schwarz space inside $L^{2}(\mathbb{R}%
)$), for all polynomials in $x$ and $p$ of degree at most 2, that is, on the
span of $\{1,x,p,x^{2},xp,p^{2}\}.$ Things do not work out so nicely, however,
if we try to quantize higher-degree polynomials. See, again, Section 13.4 of
\cite{HallQM}.

Meanwhile, if $A$ is a (possibly unbounded) self-adjoint operator, Stone's
theorem says that we can form a one-parameter unitary group $U_{A}$ given by%
\[
U_{A}(s)=e^{-isA},
\]
so that $A$ can be recovered from $U_{A}(s)$ as%
\[
A\psi=i\left.  \frac{d}{ds}U_{A}(s)\psi\right\vert _{s=0},
\]
for all $\psi$ in the domain of $A.$ See \cite[Section 10.2]{HallQM}.

\begin{theorem}
Let $G$ be the connected double cover of $SL(2;\mathbb{R}),$ and let us
identify the Lie algebra of $G$ with the Lie algebra of $SL(2;\mathbb{R})$,
which we in turn identify with the space of homogeneous polynomials of degree
$2$ in $x$ and $p.$ Then there is a unique unitary representation $U(\cdot)$
of $G$ acting on $L^{2}(\mathbb{R})$ such that
\[
U(e^{sf})=e^{-isQ(f)}%
\]
for all $f$ in the Lie algebra of $G.$ We call $U$ the \textbf{metaplectic
representation} of $G.$
\end{theorem}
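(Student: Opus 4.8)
The plan is to build $U$ by transporting the family of operators $V(A)$ of Section~\ref{meta.sec} from the Segal--Bargmann space $\mathcal{B}$ to $L^{2}(\mathbb{R})$ along the Segal--Bargmann transform, and then to check on one-parameter subgroups that the resulting representation has the prescribed infinitesimal generators. Recall from the proposition following~\eqref{metaInt} that $A\mapsto\pm V(A)$ is a projective unitary representation of $SL(2;\mathbb{R})$ that lifts to an honest unitary representation $\widetilde{V}$ of the connected double cover $G$ on $\mathcal{B}$. Let $\mathcal{C}\colon L^{2}(\mathbb{R})\to\mathcal{B}$ be the unitary Segal--Bargmann transform, under which the normalized Hermite functions $h_{n}$ go to the orthonormal basis $z^{n}/\sqrt{n!}$ (see~\cite{hall_holomorphic_methods}), and set $U(g):=\mathcal{C}^{-1}\widetilde{V}(g)\,\mathcal{C}$. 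Then $U$ is a strongly continuous unitary representation of $G$ on $L^{2}(\mathbb{R})$, and all that remains is to identify $U(e^{sf})$ with $e^{-isQ(f)}$.

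First I would verify this on two generating one-parameter subgroups. For $f=\tfrac12(x^{2}+p^{2})$ the curve $e^{sf}$ projects to the rotation subgroup of Example~\ref{classicalRot.ex}, and by~\eqref{Vrot} the operator $\widetilde{V}(e^{sf})$ acts on $\mathcal{B}$ by $z^{n}/\sqrt{n!}\mapsto e^{-is(n+\frac12)}z^{n}/\sqrt{n!}$ for the appropriate continuous choice of sign; transporting by $\mathcal{C}$ and recalling that $Q(\tfrac12(x^{2}+p^{2}))=\tfrac12(X^{2}+P^{2})$ is the harmonic oscillator with $\tfrac12(X^{2}+P^{2})h_{n}=(n+\tfrac12)h_{n}$, this says exactly that the generator of $s\mapsto U(e^{sf})$ is $-iQ(f)$ on the dense core of finite combinations of Hermite functions. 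For $f=xp$ the curve $e^{sf}$ projects to the positive diagonal subgroup of Example~\ref{classicalDil.ex}, which by Point~\ref{VArot.point} of Proposition~\ref{VAprops.prop} corresponds to $V_{\tau}$ with $\tau=\tanh s$ real; differentiating~\eqref{metaInt} (equivalently the definition~\eqref{VtauDef} of $V_{\tau}$) at $s=0$, transporting by $\mathcal{C}$, and comparing with $Q(xp)=\tfrac12(XP+PX)$ as written in~\eqref{xppx} shows the generator here is $-iQ(xp)$. Since each $Q(f)$ sends the span of $h_{0},\dots,h_{k}$ into the span of $h_{0},\dots,h_{k+2}$ with polynomially bounded norms, these finite combinations are analytic vectors for every $Q(f)$, so the core determines the self-adjoint operators and Stone's theorem yields $U(e^{sf})=e^{-isQ(f)}$ for $f\in\{\tfrac12(x^{2}+p^{2}),xp\}$. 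Because $dU$ and $f\mapsto-iQ(f)$ are both Lie-algebra homomorphisms (the latter by~\eqref{comm}) agreeing on these two elements, and $\{\tfrac12(x^{2}+p^{2}),xp\}$ together with $\tfrac12(x^{2}+p^{2})$ and $xp$ spans $\mathfrak{sl}(2;\mathbb{R})$, they agree on all of $\mathfrak{sl}(2;\mathbb{R})$, whence $U(e^{sf})=e^{-isQ(f)}$ for every $f$ in the Lie algebra.

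Uniqueness is then immediate: $G$ is connected, hence generated by any neighborhood of the identity and in particular by $\bigcup_{f}\{e^{sf}:s\in\mathbb{R}\}$, and a continuous representation is determined by its values on a generating set; so any $U$ satisfying the stated identity coincides with the one just constructed.

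The main obstacle is the input borrowed in the first paragraph, namely that formal exponentiation of the relations~\eqref{comm} produces a genuine representation of precisely the \emph{double} cover $G$, and not merely a projective representation of $SL(2;\mathbb{R})$ nor a representation of the full universal cover. One way to see this without invoking~\cite[Chapter~4]{folland_book} is intrinsic: the skew-symmetric operators $-iQ(f)$ preserve the Hermite core and satisfy the $\mathfrak{sl}(2;\mathbb{R})$ bracket relations, and a suitable elliptic element of the universal enveloping algebra is essentially self-adjoint on that core, so Nelson's analytic-vector theorem integrates the Lie-algebra representation to the simply connected cover $\widetilde{SL(2;\mathbb{R})}$; the eigenvalue computation $e^{-2\pi i(n+\frac12)}=-1$ and $e^{-4\pi i(n+\frac12)}=1$ for every $n\ge0$ then shows the kernel of this representation is exactly the index-two subgroup of $\ker(\widetilde{SL(2;\mathbb{R})}\to SL(2;\mathbb{R}))$, so the representation descends to $G$ and to no smaller quotient. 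Everything else is the bookkeeping of the generator identification above.
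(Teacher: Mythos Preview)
The paper does not prove this theorem at all: immediately after the statement it simply writes ``See, for example, Chapter~4 of Folland's book~\cite{folland_book}, in the case $n=1$.'' So your proposal goes well beyond what the paper offers.

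Your argument is essentially sound as a sketch, but note the circularity you yourself flag: the first two paragraphs rely on the proposition following~\eqref{metaInt}, which already asserts that $A\mapsto\pm V(A)$ is a projective unitary representation lifting to the double cover on $\mathcal{B}$---and the paper's only justification for \emph{that} is again the citation to Folland. Conjugating a cited result by the Segal--Bargmann transform is not an independent proof of the $L^{2}(\mathbb{R})$ statement; it merely moves one quoted fact to another model. The genuine content of your proposal is therefore the final paragraph: the Nelson analytic-vector argument integrating the $\mathfrak{sl}(2;\mathbb{R})$-representation $f\mapsto -iQ(f)$ on the Hermite core to the simply connected cover, followed by the eigenvalue computation $e^{-4\pi i(n+1/2)}=1$ to descend to the double cover. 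That is a correct and self-contained route, and is in fact the standard way such results are established (and is what underlies Folland's treatment). If you lead with that argument and drop the reliance on $V(A)$, the proof stands on its own.

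Two minor points. First, your sentence ``$\{\tfrac12(x^{2}+p^{2}),xp\}$ together with $\tfrac12(x^{2}+p^{2})$ and $xp$ spans $\mathfrak{sl}(2;\mathbb{R})$'' is garbled; presumably you mean that $\tfrac12(x^{2}+p^{2})$, $xp$, and their Poisson bracket $\tfrac12(p^{2}-x^{2})$ span the Lie algebra, which is correct. Second, the passage from ``$dU$ and $f\mapsto -iQ(f)$ agree on two generators'' to ``they agree on all of $\mathfrak{sl}(2;\mathbb{R})$'' needs the observation that $dU$ is a Lie-algebra homomorphism on a common invariant core of analytic vectors---which you have, via the Hermite functions---so that the bracket relation can be checked there; you state this but it deserves emphasis since it is exactly where the analytic-vector machinery is doing work.
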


See, for example, Chapter 4 of Folland's book \cite{folland_book}, in the case
$n=1.$

\subsection{The metaplectic representation in the Segal--Bargmann space}

In Section \ref{meta.sec}, we described the metaplectic representation on the
Segal--Bargmann space using integral operators. Here we use a Lie algebra
approach to give a more direct construction, which may provide motivation for
some of the key formulas in

In Section \ref{metaL2R.sec}, we used the conventional choice of the quantum
Hilbert space $\mathbf{H}$, namely $L^{2}(\mathbb{R}),$ consisting of
functions of a position variable $x.$ But we can also use the Segal--Bargmann
space $\mathcal{B}$ introduced in (\ref{eq:SBdef}). There is a natural unitary
map $B$ from $L^{2}(\mathbb{R})$ to $\mathcal{B}$ known as the
\textbf{Segal--Bargmann transform}. (See the paper \cite{Bargmann} of Bargmann
or Section 6 of \cite{hall_holomorphic_methods}.) We may then conjugate all
the constructions in the previous section by the Segal--Bargmann transform to
get a metaplectic representation in $\mathcal{B}.$ This means that we
introduce a new quantization map $Q_{\mathrm{SB}}$ related to the original map
$Q$ by%
\[
Q_{\mathrm{SB}}(f)=BQ(f)B^{-1}.
\]

To see how this works, one typically introduces the creation and annihilation
operators $a$ and $a^{\ast}$ in $L^{2}(\mathbb{R})$ given by%
\[
a=\frac{X+iP}{\sqrt{2}};\quad a^{\ast}=\frac{X-iP}{\sqrt{2}}.
\]
Conjugating $a$ and $a^{\ast}$ by the Segal--Bargmann transform, we get
operators $A$ and $A^{\ast}$ given by%
\[
A=\frac{d}{dz};\quad A^{\ast}=z.
\]
That is, $A^{\ast}$ consists of multiplication by $z$. (See, again,
\cite{Bargmann} or \cite[Section 6]{hall_holomorphic_methods}.)

We then consider our two main examples, corresponding to the functions
$\frac{1}{2}(x^{2}+p^{2})$ and $xp.$ In $L^{2}(\mathbb{R}),$ we compute that%
\begin{align*}
\frac{1}{2}(X^{2}+P^{2})  &  =a^{\ast}a+\frac{1}{2}\\
\frac{1}{2}(XP+PX)  &  =\frac{i}{2}((a^{\ast})^{2}-a^{2}).
\end{align*}
Thus, in the Segal--Bargmann space, we may consider the corresponding
self-adjoint operators obtained by replacing $a$ by $A$ and $a^{\ast}$ by
$A^{\ast}$:%
\begin{align}
Q_{\mathrm{SB}}\left(  \frac{1}{2}(x^{2}+p^{2})\right)   &  =A^{\ast}%
A+\frac{1}{2}=z\frac{d}{dz}+\frac{1}{2};\label{SBx2p2}\\
Q_{\mathrm{SB}}(xp)  &  =\frac{i}{2}((A^{\ast})^{2}-A^{2})=\frac{i}{2}\left(
z^{2}-\frac{d^{2}}{dz^{2}}\right)  . \label{sbxp}%
\end{align}

We now consider the quantum counterparts of Examples \ref{classicalRot.ex} and
\ref{classicalDil.ex}.

\begin{example}
\label{MetaRot.ex}Let us the function $f(x,p)=\frac{1}{2}(x^{2}+p^{2}),$ for
which the classical Hamiltonian flow $\Phi_{f}(s)$ consists of
counter-clockwise rotations by angle $s.$ We then let $Q_{\mathrm{SB}}(f)$ be
the quantum operator in the Segal--Bargmann space associated to $f,$ as given
by (\ref{SBx2p2}). Then we have%
\begin{equation}
(e^{-isQ_{\mathrm{SB}}(f)}F)(z)=e^{-is/2}F(e^{-is}z) \label{expOsc}%
\end{equation}
for every holomorphic function $F$ in the Segal--Bargmann space.
\end{example}

We note that when $s=2\pi,$ the operator in (\ref{expOsc}) is equal to $-I,$
even though the matrices in (\ref{ClassicalRotate}) come back to the $I$ at
$s=2\pi.$ This discrepancy indicates that the metaplectic representation
cannot be defined as a representation of $SL(2;\mathbb{R})$ itself, but only
as a representation of its connected double cover.

\begin{proof}
It is evident that the right-hand side of (\ref{expOsc}) defines a
one-parameter unitary group on the Segal--Bargmann space, and we easily verify
that
\[
\frac{\partial}{\partial s}e^{-is/2}F(e^{-is}z)=-i\left(  z\frac{\partial
}{\partial z}+\frac{1}{2}\right)  e^{-is/2}F(e^{-is}z).
\]
Thus, (\ref{expOsc}) holds.
\end{proof}

\begin{example}
\label{MetaDiag.ex}Let us consider the function $f(x,p)=xp,$ for which the
classical Hamiltonian flow $\Phi_{f}(s)$ is given by diagonal matrices with
diagonal entries $e^{-s}$ and $e^{s}.$ We then let $Q_{\mathrm{SB}}(f)$ be the
quantum operator associated to $f,$ as given by (\ref{sbxp}). Then we have%
\begin{equation}
(e^{-isQ_{\mathrm{SB}}(f)}F)(z)=\left(  1-t^{2}\right)  ^{1/4}~e^{\frac{t}%
{2}z^{2}}\left(  e^{-\frac{t}{2}\frac{d^{2}}{dz^{2}}}F\right)  \left(
z\sqrt{1-t^{2}}\right)  ,\quad s\in\mathbb{R},\label{uzt}%
\end{equation}
where%
\[
t=\tanh s,
\]
for every holomorphic function $F$ in the Segal--Bargmann space.
\end{example}

The operator on the right-hand side of (\ref{uzt}) is just the operator
$V_{\tau}$ in (\ref{eq:Vtau}), in the case that $\tau$ is a real number of the
form $\tau=t=-\tanh s.$

\begin{proof}
We compute formally, that is, without worrying about the domains. The result
is closely related to the Mehler formula in Example \ref{mehler.ex}, which can
be understood as a formula for the propagator of the quantum harmonic
oscillator. The quantum operator should satisfy%
\begin{align*}
\frac{d}{ds}(e^{-isQ_{\mathrm{SB}}(f)}F)(z)  &  =-i[Q_{\mathrm{SB}}(f)F](z)\\
&  =\frac{1}{2}\left(  z^{2}-\frac{d^{2}}{dz^{2}}\right)  [Q_{\mathrm{SB}%
}(f)F](z).
\end{align*}
To verify this relation, we let $u(z,t)=(e^{-\frac{t}{2}D^{2}/2}F)(z),$ so
that the right-hand side of (\ref{uzt}) is given by
\[
\left(  1-t^{2}\right)  ^{1/4}~e^{\frac{t}{2}z^{2}}u\left(  z\sqrt{1-t^{2}%
},t\right)  .
\]
We note that%
\[
\frac{\partial}{\partial s}=(1-t^{2})\frac{\partial}{\partial t}%
\]
and then compute that
\begin{align*}
&  \left(  \frac{\partial}{\partial s}-\frac{1}{2}\left(  z^{2}-\frac
{\partial^{2}}{\partial z^{2}}\right)  \right)  \left(  1-t^{2}\right)
^{1/4}~e^{\frac{t}{2}z^{2}}u\left(  z\sqrt{1-t^{2}},t\right) \\
&  =-\frac{1}{2}(1-t^{2})^{5/4}e^{\frac{t}{2}z^{2}}\left(  \frac{\partial
u}{\partial t}\left(  z\sqrt{1-t^{2}},t\right)  +\frac{1}{2}\frac{\partial
^{2}u}{\partial x^{2}}\left(  z\sqrt{1-t^{2}},t\right)  \right) \\
&  =0,
\end{align*}
since $u$ satisfies the heat equation.
\end{proof}

\begin{remark}
Every element of $SL(2;\mathbb{R})$ can be written as $R_{1}DR_{2}$, where
$R_{1}$ and $R_{2}$ in $SO(2)$ and $D$ is positive and diagonal. Thus, the
computations in Examples \ref{MetaRot.ex} and \ref{MetaDiag.ex} allow us to
compute the action of general metaplectic operator in the Segal--Bargmann space.
\end{remark}



\section*{Acknowledgments}

BH is supported in part by a grant from the Simons Foundation. CH is supported in part by the MoST grant 111-2115-M-001-011-MY3. JJ and ZK  have been supported by the DFG priority program SPP 2265 \textit{Random Geometric Systems}. ZK has been supported by the German Research Foundation under Germany's Excellence Strategy EXC 2044 -- 390685587, \textit{Mathematics M\"unster:
Dynamics - Geometry - Structure}. 

We thank the referee for a careful reading of the manuscript and useful corrections.

\end{document}